\newcommand\blfootnote[1]{%
  \begingroup
  \renewcommand\thefootnote{}\footnote{#1}%
  \addtocounter{footnote}{-1}%
  \endgroup
}
\numberwithin{equation}{section}
\theoremstyle{plain}
\newtheorem{theorem}{Theorem}[section]
\newtheorem{lemma}[theorem]{Lemma}
\newtheorem*{question}{Question}
\theoremstyle{definition}
\newtheorem*{example}{Example}
\newtheorem*{conjecture}{Conjecture}
\newtheorem{thm}{Theorem}[section]
\theoremstyle{remark}
\newtheorem*{remark}{Remark}
\newcommand{\bea}{\begin{eqnarray}} 
\newcommand{\eea}{\end{eqnarray}} 
\newcommand{\bee}{\begin{eqnarray*}} 
\newcommand{\eee}{\end{eqnarray*}} 
\newcommand{\al}{\begin{align*}} 
\newcommand{\eal}{\end{align*}} 
\newcommand{\be}{\begin{equation}} 
\newcommand{\ee}{\end{equation}} 
\newcommand{\bem}{\begin{pmatrix}} 
\newcommand{\eem}{\end{pmatrix}}
\def\c{\gamma}
\def\h{\eta} 
\def\im{\mathrm{Im}}
\def\m{\mu} 
\def\n{\nu}
\def\t{\tau} 
\def\th{\theta}
\def\D{\Delta}
\def\L{\Lambda}
\newcolumntype{R}{ >{$}r <{$}}
\newcolumntype{C}{ >{$}c <{$}}
\newcolumntype{L}{ >{$}l <{$}}
\newcolumntype{F}{>{\centering\arraybackslash}m{1.5cm}}
\def\ll{\ell}
\newcommand{\gt}[1]{\mathfrak{#1}}
\newcommand{\comment}[1]{}
\newcommand{\RR}{{\mathbb R}}
\newcommand{\CC}{{\mathbb C}}
\newcommand{\ZZ}{{\mathbb Z}}
\newcommand{\QQ}{{\mathbb Q}}
\newcommand{\HH}{{\mathbb H}}
\newcommand{\Aut}{\operatorname{Aut}}
\def\jac{\operatorname{j}}
\newcommand{\tr}{\operatorname{{tr}}}
\newcommand{\Sym}{{\textsl{Sym}}}
\newcommand{\Dih}{{\textsl{Dih}}}
\newcommand{\sgn}{\operatorname{sgn}}
\newcommand{\PSL}{\operatorname{\textsl{PSL}}}    
\newcommand{\SL}{\operatorname{\textsl{SL}}}      
\newcommand{\PGL}{\operatorname{\textsl{PGL}}}    
\newcommand{\AGL}{{\textsl{AGL}}}    
\newcommand{\GL}{{\textsl{GL}}}      
\newcommand{\G}{\Gamma}	
\newcommand{\g}{\gamma}	
\newcommand{\MM}{\mathbb{M}}	
\DeclareMathOperator{\lcm}{lcm}
\newcommand{\Z}{\mathbb Z}
\newcommand{\Vnat}{V^{\natural}}
\newcommand{\Lee}{\Lambda}
\renewcommand{\t}{\tau}
\def\rad{\operatorname{r}}
\begin{document}

\title{Proof of the Umbral Moonshine Conjecture\blfootnote{\emph{MSC2010:} 11F22, 11F37.}}


\author{John F. R. Duncan, Michael J. Griffin and Ken Ono\blfootnote{The authors thank the NSF for its support. The first author also thanks the Simons Foundation (\#316779), and the third author thanks the A. G. Candler Fund. The authors thank Miranda Cheng, Jeff Harvey, Michael Somos and the anonymous referee for helpful comments and corrections.}}

\date{2015 November 25}

\newcommand{\Addresses}{{
  \bigskip
  \footnotesize

  J. F. R.~Duncan,  
  \textsc{Dept. of Mathematics \& Computer Science, Emory University,
  Atlanta, GA 30322}\par\nopagebreak
  \textit{E-mail address}, J. F. R. Duncan: \texttt{john.duncan@emory.edu}

  \medskip

  M. J. Griffin, \textsc{Dept. of Mathematics, Princeton University,
   Princeton, NJ 08544}\par\nopagebreak
  \textit{E-mail address}, M. J. Griffin: \texttt{mjg4@princeton.edu}

  \medskip

  K.~Ono, \textsc{Dept. of Mathematics \& Computer Science, Emory University,
  Atlanta, GA 30322}\par\nopagebreak
  \textit{E-mail address}, K.~Ono: \texttt{ono@mathcs.emory.edu}

}}
\maketitle

\begin{abstract} 
The {Umbral Moonshine Conjectures} 
assert that
there are infinite-dimensional graded modules, for prescribed finite groups, whose
McKay-Thompson series are certain distinguished mock modular forms.
Gannon has proved this for the 
special case involving the largest sporadic simple Mathieu group.
Here we establish the existence of the umbral moonshine modules in the remaining 22 cases.
\end{abstract}

\clearpage

\section{Introduction and Statement of Results}\label{sec:intro}

{\it Monstrous moonshine}  relates distinguished modular functions to the representation theory of the Monster, $\MM$,
the largest sporadic simple group. This theory was inspired by the famous observations of
McKay and Thompson in the late 1970s  \cite{MR554399, Tho_NmrlgyMonsEllModFn} that 
\begin{displaymath}
\begin{split}
         196884&=1+196883,\\
	21493760&=1+196883+21296876.
	\end{split}
\end{displaymath}
The left hand sides here are familiar as  coefficients of Klein's modular function (note $q:=e^{2\pi i \tau}$),
\begin{displaymath}
J(\tau)=\sum_{n=-1}^{\infty}c(n)q^n:=
	j(\tau)-744
	=q^{-1}+196884q+21493760q^2+\dots. 
\end{displaymath}
 The sums on the right hand sides involve the first three numbers arising as dimensions of irreducible representations of $\MM$, 
 \begin{displaymath}
\begin{split}
1, \ 
196883, \
21296876, \
842609326,\ \dots, \
258823477531055064045234375.
\end{split}
\end{displaymath}

Thompson conjectured  that 
there is a 
graded infinite-dimensional $\MM$-module
$$\Vnat=\bigoplus_{n=-1}^{\infty}\Vnat_n,
$$
satisfying $\dim(\Vnat_n)=c(n)$. For $g\in \MM$,
he also suggested \cite{Tho_FinGpsModFns} to consider the graded-trace functions
\begin{displaymath}
	T_g(\tau):=\sum_{n=-1}^{\infty}\tr(g|\Vnat_n)q^n,
\end{displaymath}
now known as the {\em McKay-Thompson series},
that arise from
the conjectured $\MM$-module $\Vnat$.
Using the
character table for $\MM$, it was observed \cite{MR554399,Tho_FinGpsModFns} that the first few coefficients of each $T_g(\tau)$ coincide with those 
of a generator for the function field of a discrete group $\Gamma_g<\SL_2(\RR)$, leading
Conway and Norton \cite{MR554399} to their famous {\em Monstrous Moonshine Conjecture}: This is the claim that
for each $g\in\MM$ there is a specific {\it genus zero}
group $\Gamma_g$ such that $T_g(\tau)$ is the unique normalized {\it hauptmodul} for $\Gamma_g$, i.e., the unique $\Gamma_g$-invariant holomorphic function on $\HH$ which satisfies
$T_g(\tau)=q^{-1}+O(q)$ as $\Im(\tau)\to \infty$, and remains bounded near any cusp not equivalent to the infinite one.

In a series of ground-breaking works, Borcherds introduced vertex algebras \cite{Bor_PNAS}, and generalized Kac--Moody Lie algebras \cite{Bor_GKM,MR1120425}, and 
used these notions to prove \cite{borcherds_monstrous} the Monstrous Moonshine Conjecture of Conway and Norton.
He confirmed the conjecture for the module $\Vnat$ constructed
by Frenkel, Lepowsky, and Meurman 
\cite{FLMPNAS,  FLMBerk, FLM} in the early 1980s.
These results provide much more than the predictions of monstrous moonshine.
The $\MM$-module $\Vnat$ is a vertex operator algebra, one whose automorphism group is precisely $\MM$.
The construction of Frenkel, Lepowsky and Meurman can be regarded as one of the first examples of an {\it orbifold conformal field theory}. (Cf. \cite{MR968697}.)
Here the orbifold in question is the quotient  $\left(\RR^{24}/\Lee_{24}\right)/(\Z/2\Z)$,
of the $24$-dimensional torus $\Lee_{24}\otimes_{\Z}\RR/\Lee_{24}\simeq \RR^{24}/\Lee_{24}$ by the Kummer involution $x\mapsto -x$, where $\Lee_{24}$ denotes the Leech lattice. 

We refer to \cite{DuncanGriffinOno, FLM,MR2201600, MR2257727} for more on monstrous moonshine.

In 2010,
Eguchi, Ooguri, and Tachikawa reignited  moonshine with their observation \cite{Eguchi2010} that dimensions of some representations of
$M_{24}$, the largest sporadic simple Mathieu group (cf. e.g. \cite{ATLAS,MR1662447}), are multiplicities of superconformal algebra characters in the K3 elliptic genus. This observation suggested a manifestation of moonshine
for $M_{24}$: Namely, there should be an infinite-dimensional graded $M_{24}$-module whose McKay-Thompson series
are holomorphic parts of {\it harmonic Maass forms}, the so-called {\it mock modular forms}. (See \cite{Ono_unearthing, zagier_mock, zwegers} for introductory accounts of the theory of mock modular forms.) 

Following the work of Cheng \cite{MR2793423}, Eguchi and Hikami \cite{Eguchi2010a}, and Gaberdiel, Hohenegger, and Volpato \cite{Gaberdiel2010a, Gaberdiel2010}, 
Gannon established the existence of this infinite-dimensional graded $M_{24}$-module in \cite{Gannon:2012ck}.

It is natural to seek a general mathematical and physical setting for these results.
Here we consider the mathematical setting, which develops from the 
close relationship between the monster group $\MM$ 
and the Leech lattice $\Lambda_{24}$. 
Recall (cf. e.g. \cite{MR1662447}) that the Leech lattice is 
even, unimodular, and positive-definite of rank 24. 
It turns out that $M_{24}$ is closely related to 
another such lattice. Such observations led Cheng, Duncan and Harvey to further instances of moonshine within the setting of even unimodular positive-definite lattices of rank $24$. In this way they arrived at the {\it Umbral Moonshine Conjectures}  (cf. \S5 of \cite{UM}, \S6 of \cite{MUM}, and \S2 of \cite{mumcor}), predicting the existence of $22$ further, graded infinite-dimensional modules, relating certain finite groups to distinguished mock modular forms. 

To explain this prediction in more detail we recall Niemeier's result \cite{Niemeier} that there are
24 (up to isomorphism) even unimodular positive-definite lattices of rank $24$.
The Leech lattice is the unique one with no root vectors (i.e. lattice vectors with norm-square 2), while the other 23 have root systems with full rank, 24. These  {\em Niemeier root systems} are unions of simple simply-laced root systems with the same Coxeter numbers, and are given explicitly as
\begin{gather}\label{eqn:um-NX1}
	\begin{split}
A_1^{24},\;A_2^{12},\;A_3^{8},&\;A_4^6,\;A_6^4,\;A_{12}^2,\\
A_5^4D_4,\;A_7^2D_5^2,\;A_8^3,\;A_9^2D_6,\;&A_{11}D_7E_6,\;A_{15}D_9,\;A_{17}E_7,\;A_{24},\\
D_4^6,\;D_6^4,\;D_8^3,\;D_{10}E_7^2,\;&D_{12}^2,\;D_{16}E_8,\;D_{24},E_6^4,\;E_8^3,
	\end{split}
\end{gather}
in terms of the standard ADE notation. (Cf. e.g. \cite{MR1662447} or \cite{MR0323842} for more on root systems.)

For each Niemeier root system $X$ let $N^X$ denote the corresponding unimodular lattice, let $W^X$ denote the (normal) subgroup of $\Aut(N^X)$ generated by reflections in roots, and define the {\em umbral group} of $X$ by setting 
\begin{gather}
G^X:=\Aut(N^X) /W^X.
\end{gather}
(See \S\ref{sec:gps:cnst} for explicit descriptions of the groups $G^X$.)

Let $m^X$ denote the Coxeter number of any simple component of $X$. An association of distinguished $2m^X$-vector-valued mock modular forms $H^X_g(\tau)=(H^X_{g,r}(\tau))$ 
to elements $g\in G^X$ is described and analyzed in \cite{UM,MUM,mumcor}. 

For $X=A_1^{24}$ we have $G^X\simeq M_{24}$ and $m^X=2$, and the functions $H^X_{g,1}(\tau)$ are precisely the mock modular forms assigned to elements $g\in M_{24}$ in the works \cite{MR2793423,Eguchi2010a,Gaberdiel2010a, Gaberdiel2010} mentioned above. Generalizing the $M_{24}$ moonshine initiated by Eguchi, Ooguri and Tachikawa, we have the following conjecture
 of Cheng, Duncan and Harvey (cf. \S2 of \cite{mumcor} or \S9.3 of \cite{DuncanGriffinOno}).
 
\begin{conjecture}[Umbral Moonshine Modules]
Let $X$ be a Niemeier root system $X$ and set $m:=m^X$.
There is a naturally defined bi-graded infinite-dimensional $G^X$-module 
\begin{gather}\label{eqn:um-KX}
\check{K}^X=\bigoplus_{r\in I^X}\bigoplus_{\substack{D\in\Z,\; D\leq 0,\\D=r^2\pmod{4m}}}\check{K}^X_{r,-D/4m}
\end{gather} 
such that the vector-valued mock modular form $H^X_g=(H^X_{g,r})$ is a McKay-Thompson series for $\check{K}^X$ related\footnote{In the statement of Conjecture 6.1 of \cite{MUM} the function $H^X_{g,r}$ in (\ref{eqn:um-HXgrKXrd}) is replaced with $3H^X_{g,r}$ in the case that $X=A_8^3$. This is now known to be an error, arising from a misspecification of some of the functions $H^X_{g}$ for $X=A_8^3$. Our treatment of the case $X=A_8^3$ in this work reflects the corrected specification of the corresponding $H^X_g$ which is described and discussed in detail in \cite{mumcor}.} to the graded trace of $g$ on $\check{K}^X$ by
\begin{gather}\label{eqn:um-HXgrKXrd}
	H^X_{g,r}(\tau)=-2q^{-1/4m}\delta_{r,1}
	+\sum_{\substack{D\in\Z,\; D\leq 0,\\D=r^2\pmod{4m}}}\tr(g|\check{K}^X_{r,-D/4m})q^{-D/4m}
\end{gather}
for $r\in I^X$.
\end{conjecture}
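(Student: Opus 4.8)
The plan is to prove the existence of the umbral moonshine modules $\check K^X$ by exhibiting, for each Niemeier root system $X$, a collection of virtual $G^X$-modules whose graded traces reproduce the mock modular forms $H^X_g$, and then showing that this virtual module is in fact an honest (non-virtual) module. Concretely, for each $g \in G^X$ and each $D$ in the appropriate congruence class, the coefficient $\tr(g|\check K^X_{r,-D/4m})$ is prescribed by \eqref{eqn:um-HXgrKXrd}; summing over conjugacy classes against the irreducible characters of $G^X$ produces a candidate multiplicity for each irreducible constituent, and the content of the theorem is that these candidate multiplicities are non-negative integers for all but finitely many $D$ (the finitely many exceptional cases to be checked by hand). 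So the first step is to set up the finite-dimensional representation-theoretic bookkeeping: define $\check K^X_{r,d}$ via its character, reduce the claim to a statement about the Fourier coefficients of explicit vector-valued harmonic Maass forms.

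Second, I would invoke the modularity of the $H^X_g$. Each $H^X_g$ is (the holomorphic part of) a vector-valued harmonic Maass form of weight $1/2$ for a congruence representation, with known shadow built from the unary theta series attached to $X$; this is established in \cite{UM,MUM,mumcor}. The key analytic input is an effective bound on the Fourier coefficients, obtained from the Hardy–Ramanujan–Rademacher circle method / Poincaré series expansion for harmonic Maass forms of weight $1/2$. One extracts a main term (a Kloosterman-sum-weighted Bessel function expression) plus an error term that is $O(D^{\epsilon})$ relative to the main term, and the main term's positivity is governed by the polar part $-2q^{-1/4m}\delta_{r,1}$ together with the sign of the Kloosterman sums. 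Establishing this uniformly across all $g \in G^X$ and all 23 remaining systems $X$ is the heart of the argument.

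Third — and this is where I expect the main obstacle to lie — one must pass from ``the trace function has the right asymptotics'' to ``every irreducible multiplicity is eventually non-negative.'' The difficulty is that the candidate multiplicity of an irreducible $\chi$ is $\frac{1}{|G^X|}\sum_g \overline{\chi(g)} \, c_g(D)$, an alternating sum over group elements, so positivity of each $c_g(D)$ does not obviously give positivity of the combination; one needs the error terms in the circle-method estimates to be small enough, relative to the dominant contribution (which comes from the identity element and the other elements with a pole at the cusp), to survive the averaging against the character table. So the strategy is to compare the principal parts: show that the growth of $c_e(D)$ strictly dominates $\sum_{g\neq e}|c_g(D)|\cdot\frac{|\chi(g)|}{\chi(e)}$ for $D$ large, using that only group elements $g$ for which $H^X_{g,1}$ has a pole contribute to the exponential main term, and those poles are all $-2q^{-1/4m}$ with the same sign. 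This reduces the infinitely many cases to a finite computation, which is then discharged by direct verification using the explicit $q$-expansions of the $H^X_g$ and the character tables of the $G^X$ given in \S\ref{sec:gps:cnst}.

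Finally, I would assemble these pieces: the uniform asymptotic positivity handles all large $D$, an explicit finite check handles the remaining coefficients, and together they show the virtual $G^X$-module defined by \eqref{eqn:um-HXgrKXrd} is genuine. Consistency of the $H^X_g$ under the modular and Galois symmetries recorded in \cite{UM,MUM,mumcor} guarantees that the resulting multiplicities are simultaneously integral and well-defined, completing the construction of $\check K^X$ and hence the proof of the Umbral Moonshine Conjecture in the $22$ outstanding cases.
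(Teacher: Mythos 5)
Your strategy for the non-negativity half of the argument is essentially the paper's: write the candidate multiplicity of $\chi$ as $\frac{1}{|G^X|}\sum_g\overline{\chi(g)}c_{g,r}(n)$, expand each $H^X_{g,r}$ as a Rademacher/Maass--Poincar\'e series so that its coefficients are given by Kloosterman sums weighted by Bessel functions, observe that the exponential main term comes only from the classes whose McKay--Thompson series have a pole (all with the same principal part $-2q^{-1/4m}\delta_{r,1}$), make the error terms effective following Gannon's adaptation of Hooley's equidistribution method for the associated Sali\'e sums, and finish with a finite computer check of the small coefficients. The paper does exactly this, \emph{mutatis mutandis} from Gannon's $M_{24}$ proof, finding that positivity holds beyond the 390th coefficient for most $X$ and beyond the 1780th for $X=E_8^3$.

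The genuine gap is integrality. You dispose of it in one clause (``consistency of the $H^X_g$ under the modular and Galois symmetries\ldots guarantees that the resulting multiplicities are simultaneously integral''), but this does not follow. Galois stability of the set $\{H^X_g\}$ gives at best that the $m^X_{\chi,r}(n)$ are rational; dividing the integer coefficient data by $|G^X|$ can and a priori does introduce denominators, and there is no finite-dimensional space of weight-$1/2$ mock modular forms to which one could apply a Sturm-type bound directly, so ``check finitely many coefficients'' is not available off the shelf. The paper closes this by a separate mechanism: multiply $H^X_{\chi,r}$ by the shadow component $S^X_{e,1}$ (which has leading coefficient $1$ and integer coefficients), apply Mertens' holomorphic projection theorem to land in a finite-dimensional space of weight-$2$ quasimodular forms on $\Gamma(4M)\cap\Gamma_0(N^X_\chi)$, invoke Sturm's theorem there, and then reverse the multiplication using that the leading coefficient of $S^X_{e,1}$ is $1$. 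Without this (or some substitute, e.g.\ the explicit mock-modular replication relations of Theorem~\ref{Replicable}, which come from the same holomorphic projection), your proof establishes non-negativity of rational numbers but not that they are integers, and hence does not produce a $G^X$-module. You need to add this step.
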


In (\ref{eqn:um-KX}) and (\ref{eqn:um-HXgrKXrd}) 
the set $I^X\subset\Z/2m\Z$ is defined in the following way. If $X$ has an A-type component then $I^X:=\{1,2,3,\ldots,m-1\}$. If $X$ has no A-type component but does have a D-type component then $m=2\mod 4$, and $I^X:=\{1,3,5,\ldots,m/2\}$. The remaining cases are $X=E_6^4$ and $X=E_8^3$. In the former of these, $I^X:=\{1,4,5\}$, and in the latter case $I^X:=\{1,7\}$.

\begin{remark}
The functions $H^X_g(\tau)$ are defined explicitly in \S\ref{sec:exp}. An alternative description in terms of Rademacher sums is given in \S\ref{sec:mts:rad}. 
\end{remark}

Here we prove the following theorem.

\begin{theorem}\label{MainThm}
The umbral moonshine modules exist.
\end{theorem}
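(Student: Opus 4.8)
\emph{Proof proposal.} The plan is to prove existence by a character-theoretic argument, with no explicit (vertex-algebraic or combinatorial) construction of the modules. Fix a Niemeier root system $X$, write $m=m^X$ and $G=G^X$, and for $r\in I^X$ and $D\in\Z$ with $D\le 0$ and $D\equiv r^2\pmod{4m}$ let $c_{g,r}(D)$ denote the coefficient of $q^{-D/4m}$ in $H^X_{g,r}(\tau)$. By (\ref{eqn:um-HXgrKXrd}) it is enough to show that for every such pair $(r,D)$ the class function $[g]\mapsto c_{g,r}(D)$ on $G$ is the character of a genuine (finite-dimensional, non-virtual) $G$-module $\check K^X_{r,-D/4m}$; then (\ref{eqn:um-HXgrKXrd}) and the bigrading (\ref{eqn:um-KX}) hold by construction, and the theorem follows on ranging over the 23 Niemeier root systems (the case $X=A_1^{24}$ recovering Gannon's theorem for $M_{24}$).

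First I would reduce to a positivity statement. By the analysis of \S\ref{sec:exp} and \S\ref{sec:mts:rad}, each $H^X_g$ is a vector-valued mock modular form of weight $1/2$ with a prescribed multiplier system, prescribed shadow $S^X_g$ assembled from the action of $g$ on concrete finite-dimensional spaces attached to $X$, and prescribed principal part $-2\delta_{r,1}q^{-1/4m}$, and it coincides with an explicit Rademacher sum. Using these facts — together with the integrality of the Fourier coefficients of the $H^X_g$, their compatibility with the substitution $g\mapsto g^k$, the finite dimensionality of the ambient spaces of mock modular forms, and a finite verification — one shows that $[g]\mapsto c_{g,r}(D)$ is a \emph{virtual} character of $G$ for every $(r,D)$; equivalently, there is a virtual bigraded $G$-module $\check K^X$ whose graded traces are the $H^X_g$. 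It then remains to show that for every irreducible character $\chi_i$ of $G$ and every $(r,D)$ the isotypic multiplicity
\[
\mult_{i,r,D}\;=\;\frac{1}{|G|}\sum_{g\in G}\overline{\chi_i(g)}\,c_{g,r}(D)
\]
is non-negative.

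The heart of the argument is an effective asymptotic estimate. Expanding each $c_{g,r}(D)$ by the circle method / Rademacher-sum formula, its leading term shows that $c_{e,r}(D)$ is eventually positive and of exponential order $\exp\!\big(\tfrac{\pi}{m}\sqrt{|D|}\,\big)$ as $-D\to\infty$, while for every $g\ne e$ the group $\Gamma_g$ attached to $H^X_g$ forces $c_{g,r}(D)=O\!\big(\exp(\theta_X\sqrt{|D|})\big)$ for some constant $\theta_X<\pi/m$. Hence
\[
\mult_{i,r,D}\;=\;\frac{\chi_i(e)}{|G|}\,c_{e,r}(D)\;+\;O\!\big(\exp(\theta_X\sqrt{|D|})\big),
\]
which is positive once $-D$ exceeds an \emph{effective} bound $D_0(X)$. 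Finally, for each of the 23 root systems the remaining finitely many triples $(i,r,D)$ with $-D\le D_0(X)$ are dispatched by direct computation, using the known $q$-expansions of the $H^X_{g,r}$ and the character table of $G^X$ to verify that each $\mult_{i,r,D}$ is a non-negative integer. Together these steps show every $\check K^X_{r,-D/4m}$ is a genuine $G^X$-module, proving the theorem.

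I expect the principal obstacle to be the effective estimate: one must bound, uniformly in $-D$ and with completely explicit constants, a sum over \emph{all} conjugacy classes of $G^X$ of full Rademacher tails — controlling the Kloosterman-type exponential sums and $I$-Bessel functions that appear — and show that it is dominated by the single identity main term, all while keeping $D_0(X)$ small enough that the finite verification in the last step can actually be performed. A secondary difficulty is the virtual-character/integrality input: as in the monstrous case, this is not a formal consequence of positivity and has to be extracted from the explicit modular and representation-theoretic data.
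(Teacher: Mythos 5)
Your plan is essentially the paper's proof: reduce via orthogonality of characters to showing that the multiplicity generating functions $H^X_{\chi,r}=\tfrac{1}{|G^X|}\sum_{g}\overline{\chi(g)}H^X_{g,r}$ have non-negative integer coefficients, establish integrality by a Sturm-type finite check, and establish eventual positivity by effective estimates for the exact Rademacher/Kloosterman--Sali\'e expansions of the coefficients (via Hooley's equidistribution method as adapted by Bringmann--Ono and Gannon), dominating everything by the identity-class term and finishing with a computer verification of the finitely many remaining coefficients (up to the $1780$th in the worst case $X=E_8^3$). The one device your sketch leaves unnamed is how to make Sturm's theorem applicable in the mock modular setting --- spaces of mock modular forms do not directly admit a Sturm bound --- and the paper's solution is to multiply $H^X_{\chi,r}$ by the unary theta series $S^X_{e,1}$ and pass to the holomorphic projection via Theorem~\ref{Mertens}, landing in a genuine finite-dimensional space of weight-two quasimodular forms on $\Gamma(4m)\cap\Gamma_0(N^X_\chi)$ where the classical Sturm bound applies and can then be transported back.
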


\noindent
{\it Two remarks.}

\smallskip

\noindent 1)
Theorem~\ref{MainThm} for $X=A_1^{24}$ is the main result of Gannon's work
\cite{Gannon:2012ck}.
\smallskip

\noindent 
2) The vector-valued mock modular forms $H^X=(H^X_{g,r})$ have ``minimal"
 {\it principal parts}. This minimality is analogous to the fact that the
original McKay-Thompson series $T_g(\tau)$ for the Monster are hauptmoduln, and plays an important role in our proof.

\begin{example}  Many of Ramanujan's mock theta functions \cite{MR947735}
are components of the
vector-valued umbral McKay-Thompson series $H^X_g=(H^X_{g,r})$. 
For example, consider the root system $X=A_2^{12}$, whose umbral group is a double cover $2.M_{12}$ of the sporadic simple Mathieu group $M_{12}$.
In terms of Ramanujan's 3rd order mock theta functions
\begin{displaymath}
\begin{split}
f(q)&=1+\sum_{n=1}^{\infty}\frac{q^{n^2}}{(1+q)^2(1+q^2)^2\cdots (1+q^n)^2},\\
\phi(q)&=1+\sum_{n=1}^{\infty}\frac{q^{n^2}}{(1+q^2)(1+q^4)\cdots (1+q^{2n})},\\
\chi(q)&=1+\sum_{n=1}^{\infty}\frac{q^{n^2}}{(1-q+q^2)(1-q^2+q^4)\cdots (1-q^n+q^{2n})}\\
\omega(q)&=\sum_{n=0}^{\infty}\frac{q^{2n(n+1)}}{(1-q)^2(1-q^3)^2\cdots (1-q^{2n+1})^2},\\
\rho(q)&=\sum_{n=0}^{\infty}\frac{q^{2n(n+1)}}{(1+q+q^2)(1+q^3+q^6)\cdots (1+q^{2n+1}+q^{4n+2})},
\end{split}
\end{displaymath}
we have that
\begin{displaymath}
\begin{split}
H^{X}_{2B,1}(\tau)=H^X_{2C,1}(\tau)=H^X_{4C,1}(\tau)&=-2q^{-\frac{1}{12}}\cdot f(q^2),\\
H^X_{6C,1}(\tau)=H^X_{6D,1}(\tau)&=-2q^{-\frac{1}{12}}\cdot \chi(q^2),\\
H^X_{8C,1}(\tau)=H^X_{8D,1}(\tau)&=-2q^{-\frac{1}{12}}\cdot \phi(-q^2),\\
H^X_{2B,2}(\tau)=-H_{2C,2}^X(\tau)&=-4q^{\frac{2}{3}}\cdot \omega(-q),\\
H^X_{6C,2}(\tau)=-H^X_{6D,2}(\tau)&=2q^{\frac{2}{3}}\cdot \rho(-q).
\end{split}
\end{displaymath}
See \S5.4 of \cite{MUM} for more coincidences between umbral McKay-Thompson series and mock theta functions identified by Ramanujan almost a hundred years ago.
\end{example}

\smallskip
Our proof of Theorem~\ref{MainThm} involves the explicit determination of each $G^X$-module $\check K^X$ by computing the multiplicity of each
irreducible component for each homogeneous subspace. It guarantees the existence and
uniqueness of a $\check K^X$ which is compatible
with the representation theory of $G^X$ and the Fourier expansions of the vector-valued mock modular forms
$H^X_g(\tau)=(H^X_{g,r}(\tau))$. 

At first glance our methods do not appear to shed light on any deeper algebraic properties of the $\check K^X$, such as might correspond to the vertex operator algebra structure on $V^{\natural}$, or the monster Lie algebra introduced by Borcherds in \cite{borcherds_monstrous}. However, we do determine, and utilize, specific recursion relations for the coefficients of the umbral McKay-Thompson series which are analogous to the replicability properties of monstrous moonshine formulated by Conway and Norton in \S8 of \cite{MR554399} (cf. also \cite{MR1200252}). More specifically, we use recent work \cite{IRR} of Imamo\u{g}lu, Raum and Richter, as generalized \cite{Mertens} by Mertens, to obtain
such recursions. These results are based on the process of {\it holomorphic projection}.

\begin{theorem}\label{Replicable} 
For each $g\in G^X$ and $0<r<m$, the mock modular form $H^X_{g,r}(\tau)$ is replicable in the mock modular sense.
\end{theorem}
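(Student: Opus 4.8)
\textbf{Proof proposal for Theorem~\ref{Replicable}.}

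The plan is to realize each $H^X_{g,r}$ as the holomorphic part of a harmonic Maass form of weight $1/2$ and then apply holomorphic projection to a suitable product to extract a recursion. First I would recall from \S\ref{sec:exp} that $H^X_g=(H^X_{g,r})$ is a mock modular form of weight $1/2$ on a congruence subgroup, with shadow a weight-$3/2$ unary theta series $S^X_g=(S^X_{g,r})$ built from the theta functions attached to the root lattice of $X$ (up to an explicit constant depending on $g$). Completing $H^X_{g,r}$ to the nonholomorphic $\widehat H^X_{g,r}=H^X_{g,r}+(\text{period integral of }\overline{S^X_{g,r}})$ gives a vector-valued harmonic Maass form. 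The key device, following Imamo\u glu--Raum--Richter \cite{IRR} and its vector-valued generalization by Mertens \cite{Mertens}, is that although $\widehat H^X_{g,r}\cdot \theta$ (for an appropriate weight-$1/2$ theta series $\theta$) is a nonholomorphic modular form of weight $1$, its image under the holomorphic projection operator $\pi_{\mathrm{hol}}$ is a \emph{holomorphic} quasimodular form of weight $1$ — a genuine modular form plus a controlled correction coming from the pairing of the two shadows. Because the weights involved are small, $\pi_{\mathrm{hol}}$ converges and the resulting space of weight-$1$ holomorphic modular forms on the relevant group is finite-dimensional and effectively computable.

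The heart of the argument is then the identity
\[
\pi_{\mathrm{hol}}\bigl(\widehat H^X_{g,r}\cdot \theta_s\bigr)
= H^X_{g,r}\cdot \theta_s - (\text{Eichler-type correction term}) \in M_1(\Gamma),
\]
where the correction term is an explicit ``theta-like'' series assembled from the coefficients of the shadow $S^X_g$ and of $\theta_s$, exactly as in Theorem~1.1 of \cite{Mertens}. Rearranging this identity and comparing Fourier coefficients of $q^n$ on both sides expresses the ``new'' coefficient of $H^X_{g,r}$ at level $n$ as a bilinear (Hurwitz-class-number-weighted) convolution of earlier coefficients of $H^X_{g,r}$ with coefficients of the shadow, plus the coefficients of a fixed weight-$1$ form. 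One then checks that this bilinear recursion has precisely the shape that the literature (e.g. \cite{MR1200252}) calls ``replicable in the mock modular sense'': it determines all Fourier coefficients of $H^X_{g,r}$ from its principal part together with the shadow data. The minimality of the principal parts of $H^X_g$ (Remark~2 after Theorem~\ref{MainThm}) guarantees that the recursion is non-degenerate, i.e. that the leading term $-2q^{-1/4m}\delta_{r,1}$ really does seed the whole expansion. Carrying this out uniformly requires organizing the 23 root systems and their umbral groups by Coxeter number $m=m^X$ and handling the vector-valued structure on $I^X$; the A-type, D-type, and $E_6^4$, $E_8^3$ cases differ only in bookkeeping (which components $r$ occur, and what the relevant theta group is).

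The main obstacle I anticipate is \emph{not} the holomorphic projection itself but the verification that $\pi_{\mathrm{hol}}$ actually applies in the borderline weight-$1/2$ times weight-$1/2$ situation and that the correction term is exactly the finite, explicit object predicted: one must confirm that $\widehat H^X_{g,r}\cdot\theta_s$ has at worst the mild growth at cusps that makes $\pi_{\mathrm{hol}}$ well-defined (weight $1$ is the critical case, so the usual absolute-convergence argument is delicate and one leans on the specific polynomial growth of $\widehat H^X_{g,r}$ coming from its being a Maass form of moderate growth), and that the multiplier systems of $\widehat H^X_{g,r}$ and $\theta_s$ combine to give a genuine (scalar or low-rank vector-valued) multiplier on $\Gamma$ for which $M_1(\Gamma)$ is understood. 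A secondary, more clerical obstacle is pinning down, case by case, the precise shadow functions $S^X_g$ (with the correct normalizing constants, including the corrected specification for $X=A_8^3$ noted in the footnote and in \cite{mumcor}) so that the convolution identity closes exactly rather than up to an unexplained constant. Once these are in place, comparing $q$-coefficients is routine and yields the stated replicability for all $g\in G^X$ and all $0<r<m$.
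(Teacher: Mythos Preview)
Your overall strategy---complete $H^X_{g,r}$ to a harmonic Maass form, multiply by a theta series, apply holomorphic projection \`a la \cite{IRR,Mertens}, and read off a recursion---is exactly the paper's approach. But you have the weights wrong, and this is not a cosmetic issue.

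You propose multiplying the weight-$\tfrac12$ completion $\widehat H^X_{g,r}$ by a weight-$\tfrac12$ theta series $\theta_s$ to land in weight $1$. The paper instead multiplies $\widehat H^X_{g,r}$ by a weight-$\tfrac32$ unary theta function---in fact by the shadow $S^X_{g,r}$ itself (or by $S^X_{e,r}$ when the shadow vanishes)---to land in weight $2$. This is the content of Theorem~\ref{Mertens} (Mertens): both $g$ and $h$ there are weight-$\tfrac32$ theta functions, $f$ is weight $\tfrac12$ with shadow $h$, and the output $f^+g+D^{f,g}$ lies in $\CC E_2\oplus M_2$. Multiplying by the shadow is also what kills the pole: $\widehat H^X_{g,r}\cdot S^X_{g,r}$ has no singularity at any cusp, which is the hypothesis needed for holomorphic projection to apply cleanly.

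Your weight-$1$ route runs into precisely the obstacle you flag as ``the main obstacle I anticipate'': weight $1$ is genuinely the critical case for holomorphic projection, and the standard convergence arguments break down. Moreover, spaces of weight-$1$ forms are much less tractable computationally than weight-$2$ spaces (no dimension formula from Riemann--Roch alone, no Eisenstein series to pin down constant terms). By contrast, in weight $2$ the Eisenstein component of $F^X_{g,r}$ is determined by the constant terms at cusps, and the cuspidal part is fixed by finitely many coefficients via Sturm. So the fix is simple: replace your $\theta_s$ by the weight-$\tfrac32$ shadow $S^X_{g,r}$ (or $S^X_{e,r}$), land in weight $2$, and then your outline goes through essentially as written, with equation~(\ref{Recursion}) as the resulting recursion.
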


A key step in Borcherds' proof \cite{borcherds_monstrous} of the monstrous moonshine conjecture is the reformulation of replicability in Lie theoretic terms. We may speculate that the {\em mock modular replicability} utilized in this work will ultimately admit an analogous algebraic interpretation. Such a result remains an important goal for future work.

In the statement of Theorem~\ref{Replicable}, replicable means that there are explicit recursion relations for the coefficients of the vector-valued mock modular form in question. For example, 
we recall the recurrence formula for
Ramanujan's third order mock theta function $f(q)=\sum_{n=0}^{\infty}c_f(n)q^n$ that was obtained
recently by Imamo\u{g}lu, Raum and Richter \cite{IRR}.
If $n\in \mathbb{Q}$, then let
$$
\sigma_1(n):=\begin{cases} \sum_{d\mid n} d \ \ \ \ \ &{\text {\rm if}}\ n\in \Z,\\
0\ \ \ \ \ &{\text {\rm otherwise}},
\end{cases}
$$        
$$
\sgn^+(n):=\begin{cases} \sgn(n)\ \ \ \ \ &{\text {\rm if}}\ n\neq 0,\\
1 \ \ \ \ \ &{\text {\rm if}}\ n=0,
\end{cases}
$$
and then define
$$
d(N,\widetilde{N}, t, \widetilde{t}):= \sgn^{+}(N)\cdot \sgn^{+} (\tilde{N})\cdot\left(
|N+t|-|\widetilde{N}+\widetilde{t}|\right).
$$
Then for positive integers $n$, we have that
\begin{displaymath}
\begin{split}
           \sum_{\substack{m\in \Z\\ 3m^2+m\leq 2n}} \left (m+\frac{1}{6}\right) &c_f\left(
           n-\frac{3}{2}m^2-\frac{1}{2}m\right)\\
           &=\frac{4}{3}\sigma(n)-\frac{16}{3}\sigma\left(\frac{n}{2}\right)-2
           \sum_{\substack{a,b\in \Z\\ 2n=ab}}d\left(N,\widetilde{N},\frac{1}{6},\frac{1}{6}\right),
\end{split}
\end{displaymath}
where    $N:=\frac{1}{6}(-3a+b-1)$ and $\widetilde{N}:=\frac{1}{6}(3a+b-1)$, and the sum is over integers $a, b$ for which $N, \widetilde{N}\in \Z$.
This is easily seen to be a recurrence relation for the coefficients $c_f(n)$. The replicability formulas for all of the
$H_{g,r}^{X}(\tau)$ are similar (although some of these relations are slightly more complicated and involve the coefficients of weight 2 cusp forms).

It is important to emphasize that, despite the progress 
which is represented by our main results, Theorems \ref{MainThm} and \ref{Replicable}, the following important question remains open in general.
\begin{question}
Is there a ``natural"
construction of $\check K^X$? Is $\check K^X$  equipped with a deeper algebra structure
as in the case of the monster module $\Vnat$ of Frenkel, Lepowsky and Meurman?
\end{question}
We remark that this question has been answered positively, recently, in one special case: A vertex operator algebra structure underlying the umbral moonshine module $\check K^X$ for $X=E_8^3$ has been described explicitly in \cite{mod3e8}. See also \cite{umvan2,umvan4}, where the problem of constructing algebraic structures that illuminate the umbral moonshine observations is addressed from a different point of view.

The proof of Theorem~\ref{MainThm} is not difficult. It is essentially a collection of tedious calculations.
We use the theory of mock modular forms and the character table for each $G^X$ (cf. \S\ref{sec:gps:chars}) to
solve for the multiplicities of the irreducible $G^X$-module constituents of each homogeneous subspace in the alleged $G^X$-module
$\check K^X$. To prove Theorem \ref{MainThm} it suffices to prove that these multiplicities are non-negative integers. To prove Theorem~\ref{Replicable}
we apply recent work \cite{Mertens} of Mertens  on the holomorphic projection of weight $\frac{1}{2}$ mock modular forms, which
generalizes earlier work  \cite{IRR} of  Imamo\u{g}lu, Raum and Richter.

In \S\ref{HMF} we recall the facts about mock modular forms that we require, and we prove Theorem~\ref{Replicable}.
We prove Theorem~\ref{MainThm} in \S\ref{Proofs}. The appendices furnish all the data that our method requires. In particular, the umbral groups $G^X$ are described in detail in \S\ref{sec:gps}, and explicit definitions for the mock modular forms $H^X_g(\tau)$ are given in \S\ref{sec:mts}.

\section{Harmonic Maass forms and Mock modular forms}\label{HMF}

Here we recall some very basic facts about harmonic Maass forms as developed by Bruinier and Funke
\cite{BruFun_TwoGmtThtLfts} (see also \cite{Ono_unearthing}).

We begin by briefly recalling the definition of a \emph{harmonic Maass form} of weight $k\in \frac{1}{2}\Z$ and multiplier $\nu$ (a generalization of the notion of a Nebentypus). If $\tau=x+iy$ with $x$ and $y$ real, we define the weight $k$ hyperbolic Laplacian by
\begin{equation}
\Delta_k := -y^2\left( \frac{\partial^2}{\partial x^2} +
\frac{\partial^2}{\partial y^2}\right) + iky\left(
\frac{\partial}{\partial x}+i \frac{\partial}{\partial y}\right).
\end{equation}

Suppose $\Gamma$ is a subgroup of finite index in $\SL_2(\Z)$ and $k\in \frac{1}{2}\Z$. Then a function $F(\tau)$ which is real-analytic on the upper half of the complex plane is a {\it harmonic Maass form}  of weight $k$ on $\Gamma$ with multiplier $\nu$ if: 

\begin{enumerate}
\item[(a)]  The function $F(\tau)$ satisfies the weight $k$ modular transformation, 
$$F(\tau)|_k\gamma=\nu(\gamma)F(\tau)$$ for every matrix
$\gamma=\begin{pmatrix}a&b\\c&d\end{pmatrix}\in \Gamma,$ where $F(\tau)|_k\gamma:=F(\gamma \tau)(c\tau+d)^{-k},$ and if $k\in \Z+\frac12,$ the square root is taken to be the principal branch.
\item[(b)] We have that $\Delta_kF(\tau)=0,$
\item[(c)] There is a polynomial $P_{F}(q^{-1})$ and a constant $c>0$ such that $F(\tau)-P_{F}(e^{-2\pi i \tau})=O(e^{-cy})$ as $\tau\to i\infty$.
Analogous conditions are required at each cusp of $\Gamma$.
\end{enumerate}

We denote the $\CC$-vector space of harmonic Maass forms of a given weight $k$, group $\Gamma$ and multiplier $\nu$ by $H_k(\Gamma,\nu).$ If no multiplier is specified, we will take 
\[\nu_0(\gamma):=\left(\left(\frac{c}{d}\right)\sqrt{\left(\frac{-1}{d}\right)}^{-1}\right)^{2k},\]
where $\left(\frac{*}{d}\right)$ is the Kronecker symbol.

\subsection{Main properties}

The  Fourier expansion of a harmonic Maass form $F$ (see Proposition 3.2 of \cite{BruFun_TwoGmtThtLfts}) splits into two components. As before, we let $q:=e^{2\pi i \tau}$.
\begin{lemma}\label{HMFparts}
If $F(\tau)$ is a harmonic Maass form of weight $2-k$ for $\Gamma$ where $\frac{3}{2}\leq k\in \frac12\Z$, then

\begin{displaymath}
F(\tau)=F^+(\tau)+F^{-}(\tau),
\end{displaymath}
where $F^+$ is the holomorphic part of $F$, given by
$$
F^+(\tau)=\sum_{n\gg -\infty} c_{F}^+(n) q^{n}
$$
where the sum admits only finitely many non-zero terms with $n<0$, and $F^{-}$ is the nonholomorphic part, given by
$$
F^{-}(\tau) = \sum_{n<0} c_{F}^-(n)
\Gamma(k-1,4\pi y|n|) q^{n}.
$$
Here $\Gamma(s,z)$ is the upper incomplete gamma function.
\end{lemma}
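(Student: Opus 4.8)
The statement to be proved is Lemma~\ref{HMFparts}, which describes the Fourier expansion of a harmonic Maass form of weight $2-k$ splitting into a holomorphic part with a finite principal part and a non-holomorphic part built from incomplete gamma functions.

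\textbf{Proof proposal.} The plan is to solve the differential equation $\Delta_{2-k}F = 0$ explicitly on each Fourier mode. First I would invoke the invariance of $F$ under $\tau\mapsto\tau+h$ for some period $h$ (which exists since $\Gamma$ has finite index in $\SL_2(\Z)$, so contains a translation) to write $F(\tau)=\sum_{n}a_n(y)e^{2\pi i n x/h}$ with each coefficient $a_n$ a smooth function of $y$ alone; after rescaling I may assume $h=1$. Substituting this expansion into the equation $\Delta_{2-k}F=0$ with the explicit formula for $\Delta_k$ given in the excerpt converts the PDE into, for each $n$, a second-order linear ODE in the single variable $y$ for $a_n(y)$. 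The key step is then to identify the two-dimensional solution space of this ODE: for $n\neq 0$ one finds (after the standard substitution) that the two solutions are $e^{-2\pi n y}$ (the holomorphic-type solution, giving a term $c^+_F(n)q^n$) and a second solution expressible via the incomplete gamma function $\Gamma(k-1,4\pi|n|y)$ times $e^{-2\pi n y}$; for $n=0$ the two solutions are the constant and a power of $y$ (here $y^{k-1}$). This is a textbook computation — it is exactly Proposition 3.2 of Bruinier--Funke \cite{BruFun_TwoGmtThtLfts} — so I would carry it out just far enough to exhibit the two solution types and then cite that reference for the details.

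Next I would impose the growth condition (c) together with its analogues at the other cusps. Condition (c) says that near $i\infty$, $F$ differs from a polynomial in $q^{-1}$ by a term that is $O(e^{-cy})$. The incomplete-gamma solutions $\Gamma(k-1,4\pi|n|y)e^{2\pi i n\tau}$ for $n<0$ decay like $e^{-4\pi|n|y}\cdot e^{2\pi|n|y}=e^{-2\pi|n|y}$, hence are $O(e^{-cy})$ and are permitted; but for $n>0$ the corresponding non-holomorphic solution grows, and the constant-and-$y^{k-1}$ pair at $n=0$ contains the forbidden growing piece $y^{k-1}$ (since $k\ge 3/2$), so these must have vanishing coefficient. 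This forces $F^-$ to be supported on $n<0$ only, exactly as in the statement. The same condition (c) simultaneously forces the holomorphic part $F^+=\sum c^+_F(n)q^n$ to have only finitely many nonzero terms with $n<0$, since its principal part must be absorbed into the polynomial $P_F(q^{-1})$. A small point to handle: because $k$ may be a half-integer, the relevant translation period and the multiplier force the exponents $n$ to lie in a fixed coset of $\frac1h\Z$ rather than in $\Z$, but this does not affect the shape of the argument; I would simply note this and keep the notation $q^n$ with $n$ ranging over the appropriate arithmetic progression, which is what the hypotheses already implicitly assume.

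\textbf{Main obstacle.} There is no serious obstacle: the entire content is the elementary ODE computation solving $\Delta_{2-k}F=0$ mode-by-mode, and it is already in the literature. The one place demanding a little care is correctly matching the parameter shift ``weight $2-k$'' in the hypothesis against the ``$k$'' appearing inside $\Gamma(k-1,\cdot)$ and $y^{k-1}$ in the conclusion — i.e. keeping track of which weight index goes into the Laplacian and making sure the exponent $k-1$ in the incomplete gamma function and the constraint $k\ge 3/2$ come out consistently — and checking that the decay/growth bookkeeping at every cusp (not just $i\infty$) is uniform. Given that, the cleanest exposition is to reduce to the ODE, state its solution basis, and cite \cite{BruFun_TwoGmtThtLfts} for the verification, then spend the remaining words on the growth-condition argument that pins down the sign of $n$ in each of the two pieces.
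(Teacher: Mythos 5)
Your proposal is correct and is essentially the argument the paper relies on: the paper gives no proof of its own but cites Proposition 3.2 of Bruinier--Funke, which proceeds exactly as you describe (Fourier expansion in $x$, the mode-by-mode second-order ODE in $y$ with solution basis $\{e^{-2\pi ny},\ \Gamma(k-1,4\pi|n|y)e^{-2\pi ny}\}$ for $n\neq 0$ and $\{1,\ y^{k-1}\}$ for $n=0$, then the growth condition (c) at the cusps to eliminate the exponentially growing and $y^{k-1}$ solutions). Your sign and decay bookkeeping, including the $e^{-2\pi|n|y}$ decay of the $n<0$ incomplete-gamma terms and the exclusion of $y^{k-1}$ via $k\geq 3/2$, is accurate.
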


The holomorphic part of a harmonic Maass form is called a \emph{mock modular form.} We denote the space of harmonic Maass forms of weight $2-k$ for $\Gamma$ and multiplier $\nu$ by $H_{k}(\Gamma,\nu).$ Similarly, we denote the corresponding subspace of holomorphic modular forms by $M_k(\Gamma,\nu),$ and the space of cusp forms by $ S_k(\Gamma,{\nu})$.
 The differential operator $\xi_{w}:=2iy^w\overline{\frac{\partial}{\partial \overline \tau}}$ (see  \cite{BruFun_TwoGmtThtLfts})  defines a surjective map 
$$\xi_{2-k}:H_{2-k}(\Gamma,\nu)\to S_k(\Gamma,\overline{\nu})$$
onto the space of weight $k$ cusp forms for the same group but conjugate multiplier.
 The \emph{shadow} of a Maass form $f(\tau)\in H_{2-k}(\Gamma,\nu)$ is the cusp form $g(\tau)\in S_k(\Gamma,\overline{\nu})$ (defined, for now, only up to scale) such that $\xi_{2-k}f(\tau)=\frac{g}{||g||}$, where $||\bullet||$ denotes the usual Petersson norm.

\subsection{Holomorphic projection of weight $\frac{1}{2}$ mock modular forms}

As noted above, the modular transformations of a weight $\frac{1}{2}$ harmonic Maass form may be simplified by multiplying by its shadow to obtain a weight $2$ nonholomorphic modular form. One can use the theory of holomorphic projections to obtain explicit identities relating these nonholomorphic modular forms to classical quasimodular forms.
In this way, we may essentially reduce many questions about the coefficients of weight $\frac{1}{2}$ mock modular forms to questions about weight 2 holomorphic modular forms. The following theorem is a special case of a more general theorem due to Mertens (cf. Theorem 6.3 of \cite{Mertens}). See also \cite{IRR}.

\begin{theorem}[Mertens]\label{Mertens}
Suppose $g(\tau)$ and $h(\tau)$ are both theta functions of weight $\frac32$ contained in $S_{\frac 32}(\Gamma,\nu_g)$ and $S_{\frac 32}(\Gamma,\nu_h)$ respectively, with Fourier expansions
\begin{displaymath}
\begin{split}
g(\tau):&=\sum_{i=1}^{s}\sum_{n\in\Z}n\chi_i(n)q^{n^2},\\
h(\tau)&:=\sum_{j=1}^{t}\sum_{n\in\Z}n\psi_j(n)q^{n^2},
\end{split}
\end{displaymath}
where each $\chi_i$ and $\psi_i$ is a Dirichlet character. 
Moreover, suppose $h(\tau)$ is the shadow of a weight $\frac{1}{2}$ harmonic Maass form $f(\tau) \in H_{\frac 12}(\Gamma,\overline {\nu_h}).$ 
Define the function
\[D^{f,g}(\tau):=2 \ \sum_{r=1}^{\infty}\sum_{\chi_i,\psi_j}\sum_{\substack{m,n\in \Z^+\\m^2-n^2=r}}\chi_i(m)\overline{\psi_j(n)}(m-n)q^r.\]
If $f(\tau)g(\tau)$ has no singularity at any cusp, then $f^{+}(\tau)g(\tau)+D^{f,g}(\tau)$ is a weight $2$ quasimodular form.
In other words, it lies in the space $\CC E_2(\tau)\oplus M_2(\Gamma,\nu_g\overline{\nu_h})$, where $E_2(\tau)$ is the quasimodular Eisenstein series $E_2(\tau):=1-24\displaystyle\sum_{n\geq1}\frac{nq^n}{1-q^n}.$
\end{theorem}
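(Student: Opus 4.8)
Since $h$ is the shadow of $f \in H_{1/2}(\Gamma, \overline{\nu_h})$, the completion of $f$ is $\widehat f = f^+ + f^-$ with $\xi_{1/2} \widehat f = h/\|h\|$, and $f^-$ is given by a period integral (a non-holomorphic Eichler integral) of $h$. Multiplying by the weight-$3/2$ holomorphic form $g$ yields a real-analytic modular form $\widehat f g$ of weight $2$ for $\Gamma$ with multiplier $\nu_g \overline{\nu_h}$. The hypothesis that $fg$ has no singularity at any cusp ensures $\widehat f g$ is of at most polynomial growth — in fact bounded — at the cusps, so the holomorphic projection operator $\pi_{\mathrm{hol}}$ is defined on it, and $\pi_{\mathrm{hol}}(\widehat f g)$ is a holomorphic quasimodular form of weight $2$ on $\Gamma$, i.e. an element of $\CC E_2 \oplus M_2(\Gamma, \nu_g\overline{\nu_h})$. (One needs the standard caveat that in weight $2$ the holomorphic projection can pick up a multiple of $E_2$; this is exactly the $\CC E_2$ summand in the statement.)

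**The main computation is to identify $\pi_{\mathrm{hol}}(\widehat f g)$ explicitly as $f^+ g + D^{f,g}$.** I would split $\widehat f g = f^+ g + f^- g$. The first piece $f^+ g$ is already holomorphic (a $q$-series, possibly with finitely many polar terms that the no-singularity hypothesis rules out), so it is essentially fixed by $\pi_{\mathrm{hol}}$. The real work is computing $\pi_{\mathrm{hol}}(f^- g)$. Here one uses the explicit Fourier expansion of $f^-$ coming from Lemma~\ref{HMFparts}: writing $h(\tau) = \sum_j \sum_n n\psi_j(n) q^{n^2}$, the non-holomorphic part $f^-$ has coefficients proportional to $\psi_j(n)\,\Gamma(-1/2, 4\pi n^2 y)\, q^{-n^2}$ (up to normalization by $\|h\|$ and elementary constants). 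Then $f^- g$ has $n$-th Fourier coefficient an explicit finite sum over the relevant indices of products $\psi_j(n_2)\chi_i(n_1)$ times an incomplete-gamma factor depending on $y$, and one applies the Gross–Zagier/Sturm formula for $\pi_{\mathrm{hol}}$ in weight $2$: the $r$-th coefficient of $\pi_{\mathrm{hol}}(\phi)$ is $\frac{(4\pi r)}{\,\Gamma(1)\,}\int_0^\infty c_\phi(r,y) e^{-4\pi r y}\, y^{0}\,dy$ (with the appropriate boundary/regularization in weight $2$). The integral $\int_0^\infty \Gamma(-1/2, 4\pi n^2 y)\, e^{-4\pi(m^2 - n^2)y}\,dy$ evaluates in closed form to something proportional to $(m-n)/(m^2-n^2) \cdot$ (elementary constant) when $m^2 - n^2 = r > 0$; summing $r m^2 - n^2$ over the pairs produces exactly the series $D^{f,g}(\tau) = 2\sum_{r\geq 1}\sum_{\chi_i,\psi_j}\sum_{m^2-n^2 = r}\chi_i(m)\overline{\psi_j(n)}(m-n)q^r$. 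The factor of $2$ and the $(m-n)$ (rather than a symmetric combination) come from combining the $n > 0$ and $n < 0$ contributions in $h$ and tracking signs carefully.

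**The hardest part will be the weight-$2$ subtleties in the holomorphic projection**, namely: (i) justifying that $\pi_{\mathrm{hol}}$ applies despite weight $2$ being the boundary case where the naive Poincaré-series argument diverges — this is where the no-singularity-at-cusps hypothesis and a careful regularization (à la Sturm, or Gross–Zagier's Proposition in their "Heegner points" paper, or Mertens's own Proposition~\ref{Mertens} setup) are essential; and (ii) controlling the possible appearance of the quasimodular $E_2$, i.e. showing the output lands in $\CC E_2 \oplus M_2$ rather than genuinely failing to be quasimodular. For the present theorem, however, all of this is subsumed by Mertens's general Theorem~6.3 of \cite{Mertens} (itself building on \cite{IRR}): so the proof strategy is really to \emph{specialize} that general theorem — which handles products of a mock modular form of weight $1/2$ with a unary theta series of weight $3/2$ — to the case where both the shadow $h$ and the auxiliary form $g$ are unary theta series with Dirichlet-character coefficients as displayed, check that the hypotheses of Mertens's theorem (cusp conditions, the shadow relationship $\xi_{1/2}\widehat f = h/\|h\|$) are met, and read off that his holomorphic-projection identity becomes precisely $f^+ g + D^{f,g} \in \CC E_2 \oplus M_2(\Gamma, \nu_g\overline{\nu_h})$ with $D^{f,g}$ as defined. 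Thus the proof is essentially an invocation-and-bookkeeping argument rather than a fresh analytic computation, and I would present it as such, with the Fourier-coefficient matching of $D^{f,g}$ against Mertens's general formula being the one place where genuine (though routine) verification is required.
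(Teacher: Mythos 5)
Your proposal is correct and ends up in exactly the same place as the paper, which offers no independent proof of this statement but simply records it as a special case of Theorem~6.3 of Mertens (cf.\ also Imamo\u{g}lu--Raum--Richter); your concluding ``invocation-and-bookkeeping'' framing is precisely the paper's approach. Your preceding sketch of the underlying holomorphic-projection computation (splitting $\widehat f g=f^+g+f^-g$, evaluating the incomplete-gamma integrals to produce $D^{f,g}$, and flagging the weight-$2$ regularization and the $\CC E_2$ summand) is a reasonable outline of how Mertens's general theorem is itself proved, modulo minor normalization details you correctly leave to the cited source.
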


\medskip
\noindent
{\it Two Remarks.}

\noindent
1) These identities give recurrence relations for the weight $\frac{1}{2}$ mock modular form $f^+$ in terms of the
weight 2 quasimodular form which equals $f^+(\tau)g(\tau)+D^{f,g}(\tau)$. The example after Theorem~\ref{Replicable}
for Ramanujan's third order mock theta function $f$ is an explicit example of such a relation.

\smallskip
\noindent
2) Theorem~\ref{Mertens} extends to vector-valued mock modular forms in a natural way.

\medskip

\begin{proof}[Proof of Theorem~\ref{Replicable}]
Fix a Niemeier lattice and its root system $X$, and let $M=m^X$ denote its Coxeter number. Each $H_{g,r}^{X}(\tau)$ is the holomorphic part of a weight $\frac{1}{2}$ harmonic Maass form $\widehat{H}_{g,r}^{X}(\tau).$ To simplify the exposition in the following section, we will emphasize the case that the root system $X$ is of pure A-type. 
If the root system $X$ is of pure A-type, the shadow function $S_{g,r}^{X}(\tau)$ is given by $\hat \chi_{g,r}^{X_A} S_{M,r}(\tau)$ (see \S\ref{sec:mts:shd}), where  
$$
S_{M,r}(\tau)=\sum_{\substack{n\in \Z\\n\equiv r \pmod{2M}}}n ~q^{\frac{n^2}{4M}},
$$
and $\hat\chi_{g,r}^{X_A}$= $\chi_{g}^{X_A}$ or $\bar\chi_{g}^{X_A}$ depending on the parity of $r$ is the twisted Euler character given in the appropriate table in \S\ref{sec:chars:eul}, a 
character of $G^X.$ (If $X$ is not of pure A-type, then the shadow function $S_{g,r}^{X}(\tau)$ is a linear combination of similar functions as described in \S\ref{sec:mts:shd}.)

Given $X$ and $g$, the symbol $n_g|h_g$ given in the corresponding table in \S\ref{sec:chars:eul} defines the modularity for the vector-valued function $(\widehat{H}_{g,r}^{X}(\tau))$. In particular, if the shadow $(S_{g,r}^{X}(\tau))$ is nonzero, and if for $\gamma\in \Gamma_0(n_g)$ we have that 
$$(S_{g,r}^{X}(\tau))|_{3/2}\gamma=\sigma_{g,\gamma}(S_{g,r}^{X}(\tau)),$$
then 
$$(\widehat{H}_{g,r}^{X}(\tau))|_{1/2}\gamma=\overline{\sigma_{g,\gamma}}(\widehat{H}_{g,r}^{X}(\tau)).$$
Here, for $\gamma\in \Gamma_0(n_g)$, we have $\sigma_{g,\gamma}=\nu_{g}(\gamma)\sigma_{e,\gamma}$ where $\nu_{g}(\gamma)$ is a multiplier which is trivial on $\Gamma_0(n_gh_g)$. 
This identity holds even in the case that the shadow $S_{g,r}^X$ vanishes. 

The vector-valued function $(H_{g,r}^{X}(\tau))$ has poles only at the infinite cusp of $\Gamma_0(n_g)$, and only at the component $H_{g,r}^{X}(\tau)$ where $r=1$ if $X$ has pure A-type, or at components where $r^2\equiv 1 \pmod{4M}$ otherwise. These poles may only have order $\frac{1}{4M}.$ This implies that the function $(\widehat{H}_{g,r}^{X}(\tau)S_{g,r}^{X}(\tau))$ has no pole at any cusp, and is therefore a candidate for an application of Theorem \ref{Mertens}.

The modular transformation of $S_{M,r}(\tau)$ implies that 
$$(\sigma_{e,S})^2=(\sigma_{e,T})^{4M}=\mathbf{I}$$
 where $S=\begin{pmatrix}0&-1\\1&0\end{pmatrix}$, $T=\begin{pmatrix}1&1\\0&1\end{pmatrix}$, and $\mathbf{I}$ is the identity matrix. Therefore $S_{M,r}^{X}(\tau)$, viewed as a scalar-valued modular function, is modular on $\Gamma(4M),$ and so $(\widehat{H}_{g,r}^{X}(\tau)S_{g,r}^{X}(\tau))$ is a weight $2$ nonholomorphic scalar-valued modular form for the group $\Gamma(4M)\cap \Gamma_0(n_g)$ with trivial multiplier.

Applying Theorem~\ref{Mertens}, we obtain a function $F_{g,r}^{X}(\tau)$---call it the holomorphic projection of $\widehat{H_{g,r}^{X}}(\tau) S_{e,r}^{X}(\tau)$---which is a weight 2 quasimodular form on $\Gamma(4M)\cap \Gamma_0(n_g).$ In the case that $S_{g,r}^{X}(\tau)$ is zero, we substitute $S_{e,r}^{X}(\tau)$ in its place to obtain a function $\widetilde F_{g,r}^{X}(\tau)=H_{g,r}^{X}(\tau) S_{e,r}^{X}(\tau)$ which is a weight $2$ holomorphic scalar-valued modular form for the group $\Gamma(4M)\cap \Gamma_0(n_g)$ with multiplier $\nu_{g}$ (alternatively, modular for the group $\Gamma(4M)\cap \Gamma_0(n_gh_g)$ with trivial multiplier).

The function $F_{g,r}^{X}(\tau)$
may be determined explicitly as the sum of Eisenstein series and cusp forms on $\Gamma(4M)\cap \Gamma_0(n_gh_g)$ using the standard arguments from the theory of holomorphic modular forms (i.e. the ``first few" coefficients determine such a form).
Therefore, we have the identity
\begin{equation}\label{defF}
F_{g,r}^X(\tau)=H_{g,r}^X(\tau)\cdot S_{g,r}^X(\tau)+D_{g,r}^X(\tau),
\end{equation}
where 
the function $D_{g,r}^X(\tau)$ is the correction term arising in Theorem \ref{Mertens}. If $X$ has pure A-type, then

\begin{equation}
D_{g,r}^X(\tau)=(\hat\chi_{g,r}^{X_A})^2\sum_{N=1}^{\infty}\sum_{\substack{m,n\in \Z_+\\m^2-n^2=N}}\phi_r(m)\phi_r(n)(m-n)q^\frac{N}{4M},
\end{equation}
where $$\phi_r(\ell)=\begin{cases}\pm1 &\text{ if } \ell\equiv \pm r \pmod {2M}\\0&\text{ otherwise.}\end{cases}$$

Suppose $H_{g,r}^{X}( \tau)=\displaystyle\sum_{n=0}^{\infty}A^X_{g,r}(n) q^{n-\frac{D}{4M}}$ where $0<D<4M$ and $D\equiv r^2 \pmod {4M},$ and $F_{g,r}^{X}(\tau)=\displaystyle\sum_{N=0}^{\infty}B_{g,r}^X(n)q^n.$ Then by Theorem \ref{Mertens}, we find that

\begin{equation}\label{Recursion}
\begin{split}
B^X_{g,r}(N)=&\hat\chi_{g,r}^{X_A}\sum_{\substack{m\in \Z\\m\equiv r \pmod{2M}}}
m\cdot A^X_{g,r}\left(N+\frac{D-m^2}{4M}\right)+(\hat\chi_{g,r}^{X_A})^2\sum_{\substack{m,n\in \Z^+\\m^2-n^2=N}}\phi_r(m)\phi_r(n)(m-n).
\end{split}
\end{equation}
\noindent

The function $F_{g,r}^{X}(\tau)$ may be found in the following manner. Using the explicit prescriptions for ${H_{g,r}^{X}}( \tau)$ given in \S\ref{sec:exp} and (\ref{defF}) above, we may calculate the first several coefficients of each component. The Eisenstein component is determined by the constant terms at cusps. Since $D_{g,r}^X(\tau)$ (and the corresponding correction terms at other cusps) has no constant term, these are the same as the constant terms of $\widehat{H_{g,r}^{X}}( \tau)S_{g,r}^{X}(\tau),$ which are determined by the poles of $\widehat{H_{g,r}^{X}}$. Call this Eisenstein component $E_{g,r}^{X}(\tau).$ The cuspidal component can be found by matching the initial coefficients of $F_{g,r}^{X}(\tau)-E_{g,r}^{X}(\tau).$

Once the coefficients $B^X_{g,r}(n)$ are known, equation (\ref{Recursion}) provides a recursion relation which may be used to calculate the coefficients of $H^{X}_{g,r}(\tau).$ If the shadows $S_{g,r}^{X}(\tau)$ are zero, then we may apply a similar procedure in order to determine $\widetilde F_{g,r}^{X}(\tau)$. For example, suppose $\widetilde F_{g,r}^{X}(\tau)=\displaystyle\sum_{N=0}^{\infty}\widetilde  B_{g,r}^X(n)q^n,$ and $X$ has pure A-type. Then we find that the coefficients $\widetilde B^X_{g,r}(N)$ satisfy

\begin{equation}\label{Recursion2}
\begin{split}
\widetilde B^X_{g,r}(N)=\hat\chi_{g,r}^{X_A}&\sum_{\substack{m\in \Z\\m\equiv r \pmod{2M}}}
m\cdot A^X_{g,r}\left(N+\frac{D-m^2}{4M}\right)
\end{split}
\end{equation}
Proceeding in this way we obtain the claimed results.
\end{proof}

\section{Proof of Theorem 1.1}\label{Proofs}
Here we prove Theorem~\ref{MainThm}. The idea is as follows. For each Niemeier root system $X$ we begin with the vector-valued mock modular
forms $(H^X_g(\tau))$ for $g\in G^X$. 
We use their $q$-expansions to solve for the $q$-series whose coefficients are the alleged multiplicities of the irreducible
components of the alleged infinite-dimensional $G^X$-module
$$\check K^X=\bigoplus_{r\pmod{2m}}\bigoplus_{\substack{D\in\Z,\;D\leq 0,\\D=r^2\pmod{4m}}}\check K^X_{r,-D/4m}.
$$
These $q$-series turn out to be mock modular forms. The proof requires that we establish that these mock modular forms have non-negative integer coefficients.

\begin{proof}[Proof of Theorem~\ref{MainThm}]
As in the previous section, we fix a root system $X$ and set $M:=m^X$, and we emphasize the case when $X$ is of pure A-type.

The umbral moonshine conjecture asserts that 
\begin{equation}\label{Hg_as_mult}
H_{g,r}^{X}(\tau) =\sum_{n=0}^\infty\sum_{\chi}m_{\chi,r}^{X}(n)\chi(g)q^{n-\frac{r^2}{4M}}
\end{equation}
where the second sum is over the irreducible characters of $G^X$. Here we have rewritten the traces of the graded components $\check{K}^X_{r,n-{r^2}/{4M}}$ in \ref{eqn:um-HXgrKXrd} in terms of the values of the irreducible characters of $G^X$, where the $m_{\chi,r}^{X}(n)$ are the corresponding multiplicities. Naturally, if such a $\check K^X$ exists, these multiplicities must be non-negative integers for $n>0$. Similarly, if the mock modular forms $H_{g,r}^{X}(\tau)$ can be expressed as in \ref{Hg_as_mult} with $m_{\chi,r}^{X}(n)$ non-negative integers, then we may construct the umbral moonshine module $\check K^X$ explicitly with $\check K^X_{r,n-r^2/4M}$ defined as the direct sum of irreducible components with the given multiplicities $m_{\chi,r}^{X}(n).$ 

Let
\begin{equation}\label{H_chi Def}
H_{\chi,r}^{X}(\tau):=\frac{1}{|G^X|}\sum_{g}\overline{\chi(g)}H_{g,r}^{X}(\tau).
\end{equation}
It turns out that the coefficients of $H_{\chi,r}^{X}(\tau)$ are precisely the multiplicities $m_{\chi,r}^{X}(n)$ required so that \ref{Hg_as_mult} holds: if
\begin{equation}\label{Hchi_as_mult}
H_{\chi,r}^{X}(\tau)=\sum_{n=0}^\infty m_{\chi,r}^{X}(n)q^{n-\frac{r^2}{4M}},
\end{equation} 
then 
\begin{equation*}
H_{g,r}^{X}(\tau) =\sum_{n=0}^\infty\sum_{\chi}m_{\chi,r}^{X}(n)\chi(g)q^{n-\frac{r^2}{4M}}.
\end{equation*} 
Thus the umbral moonshine conjecture is true if and only if the Fourier coefficients of $H_{\chi,r}^{X}(\tau)$ are non-negative integers. 

To see this fact, we recall the orthogonality of characters. For irreducible characters $\chi_i$ and $\chi_j,$
\begin{equation}\label{Orthogonality1}
\frac{1}{|G^X|}\sum_{g\in G^X} \overline{\chi_i(g)}\chi_j(g)=\begin{cases}1 & $ if $ \chi_i=\chi_j,\\
0 & $ otherwise.$
\end{cases}
\end{equation}
We also have the relation for $g$ and $h\in G^X$,
\begin{equation}\label{Orthogonality2}
\sum_{\chi} \overline{\chi_i(g)}\chi_i(h)=\begin{cases}|C_{G^X}(g)| & $ if $ g $ and $ h $ are conjugate,$\\
0 & $ otherwise.$
\end{cases}
\end{equation}
Here $|C_{G^X}(g)|$ is the order of the centralizer of $g$ in $G^{X}$. Since the order of the centralizer times the order of the conjugacy class of an element is the order of the group,  (\ref{H_chi Def}) and (\ref{Orthogonality2}) together imply the relation
$$
H_{g,r}^{X}(\tau)=\sum_{\chi}\chi(g)H_{\chi,r}^{X}(\tau),
$$
which in turn implies \ref{Hchi_as_mult}.

We have reduced the theorem to proving that the coefficients of certain weight $\frac12$ mock modular forms
are all non-negative integers.
For holomorphic modular forms we may answer questions of this type by making use of Sturm's theorem \cite{Sturm} (see also Theorem~2.58 of \cite{CBMS}). This theorem provides a bound $B$ associated to a space of modular forms such that if the first $B$ coefficients of a modular form $f(\tau)$ are integral, then all of the coefficients of $f(\tau)$ are integral. This bound reduces many questions about the Fourier coefficients of modular forms to finite calculations. 

Sturm's theorem relies on the finite dimensionality of certain spaces of modular forms, and so it can not be applied directly to spaces of mock modular forms. However, by making use of holomorphic projection we can adapt Sturm's theorem to this setting. 

Let $\widehat{H^{X}_{\chi,r}}(\tau)$ be defined as above.
Recall that the transformation matrix for the vector-valued function $\widehat{H^{X}_{g,r}}(\tau))$ is $\overline{\sigma_{g,\gamma}},$ the conjugate of the transformation matrix for $(S_{e,r}^{X}(\tau))$ when $\gamma\in \Gamma_0(n_gh_g),$ and $\sigma_{g,\gamma}$ is the identity for $\gamma\in \Gamma(4M).$ Therefore if
\[
N_\chi^X:=\lcm\{n_gh_g \mid  g \in G,\chi(g)\neq0\},
\]
then the scalar-valued functions $\widehat{H^{X}_{\chi,r}}(\tau)$ are modular on $\Gamma(4M)\cap\Gamma_0(N_\chi^X).$

Let $$A_{\chi,r}(\tau):=H^{X}_{\chi,r}(\tau)S_{e,1}^{X}(\tau),$$ 
and let $\tilde A_{\chi,r}(\tau)$ be the holomorphic projection of $A_{\chi,r}(\tau).$
Suppose that $H^{X}_{\chi,r}(\tau)$ has integral coefficients up to some bound $B$. 
Formulas for the shadow functions (cf. \S\ref{sec:mts:shd}) show that the leading coefficient of $S_{e,1}^{X}(\tau)$ is $1$ and has integral coefficients. This implies that the function
$$A_{\chi,r}(\tau):=H^{X}_{\chi,r}(\tau)S_{e,1}^{X}(\tau)$$
also has integral coefficients up to the bound $B$. 
The shadow of $H^{X}_{\chi,r}(\tau)$ is given by
\[
{S_{\chi,r}^{X}(\tau)}:=\frac{1}{|G^X|}\sum_{g}\overline{\chi(g)}S_{g,r}^{X}(\tau).
\]
If $X$ is pure A-type, then $S_{g,r}^{X}(\tau)=\chi_{g,r}^{X_A}S_{M,r}(\tau)=(\chi'(g)+\chi''(g))S_{M,r}(\tau)$ for some irreducible characters $\chi'$ and $\chi''$, according to \S\ref{sec:chars:eul} and \S\ref{sec:mts:shd}. Therefore,
\begin{equation*}
S_{\chi,r}^{X}(\tau)=\begin{cases}S_{M,r}(\tau) &$ if $ \chi=\chi'$ or $\chi'',\\ 0 &$ otherwise.$
\end{cases}\end{equation*}
When $X$ is not of pure A-type the shadow is some sum of such functions, but in every case has integer coefficients, and so, applying Theorem \ref{Mertens} to $A_{\chi,r}(\tau),$ we find that $\tilde A_{\chi,r}(\tau)$ also has integer coefficients up to the bound $B$. In particular, since $\tilde A_{\chi,r}(\tau)$ is modular on $\Gamma(4M)\cap\Gamma_0(N_\chi^X)$, then if $B$ is at least the Sturm bound for this group we have that every coefficient of $\tilde A_{\chi,r}(\tau)$ is integral. Since the leading coefficient of $S_{e,1}^{X}(\tau)$ is $1,$ we may reverse this argument and obtain that every coefficient of ${H^{X}_{\chi,r}}(\tau)$ is integral. Therefore, in order to check that $H^{X}_{\chi,r}(\tau)$ has only integer coefficients, it suffices to check up to the Sturm bound for $\Gamma(4M)\cap\Gamma_0(N_\chi).$
These calculations were carried out using the $\tt{sage}$ mathematical software \cite{sage}.

The calculations and argument given above shows that the multiplicities $m_{\chi,r}^X(n)$ are all integers.
To complete the proof, it suffices to check that they are also are non-negative.
The proof of this claim follows easily by modifying step-by-step the argument in
Gannon's proof of non-negativity in the $M_{24}$ case \cite{Gannon:2012ck} (i.e. $X=A_1^{24}$).
Here we describe how this is done.

Expressions for the alleged McKay-Thompson series
 $H^{X}_{g,r}(\tau)$ in terms of {Rademacher sums} and unary theta functions are given
 in \S\ref{sec:mts:rad}. 
 Exact formulas are known for all the coefficients of Rademacher sums because they are defined by
 averaging the special function
 $\rad^{[\alpha]}_{1/2}(\g,\tau)$ (see (\ref{eqn:mts:rad-radreg})) over cosets of a specific modular group modulo $\Gamma_{\infty}$, the subgroup of translations.
 Therefore, Rademacher sums are standard Maass-Poincar\'e series, and as a result we have formulas for each of their coefficients as
convergent infinite sums of Kloosterman-type sums weighted by values of the  $I_{1/2}$ modified Bessel function. (For example,
see \cite{BringmannOno} or \cite{whalen} for the general theory, and \cite{Cheng2011} for the specific case that $X=A_1^{24}$.) More importantly, this means also that
the generating function for the multiplicities $m_{\chi,r}^X(n)$ is a weight $\frac12$ harmonic Maass form, which in turn means
that exact formulas (modulo the unary theta functions) are also available in similar terms.
For positive integers $n$, this then means that (cf. Theorem 1.1 of \cite{BringmannOno})
\begin{equation}\label{Multiplicities}
m_{\chi,r}^X(n)=\sum_{\rho} \sum_{m<0} \frac{a_{\rho}^X(m)}{n^{\frac{1}{4}}}\sum_{c=1}^{\infty} \frac{K^X_{\rho}(m,n,c)}{c}\cdot \mathbb{I}^X\left (\frac{4\pi \sqrt{|nm|}}{c}\right),
\end{equation}
where the sums are over the cusps $\rho$ of the group $\Gamma_0(N_g^X)$, and finitely many explicit negative rational numbers $m$.
The constants $a_{\rho}^X(m)$ are essentially the coefficients which describe the generating function in terms of Maass-Poincar\'e series.
Here $\mathbb{I}$ is a suitable normalization and change of variable for the standard $I_{1/2}$ modified Bessel-function.

The Kloosterman-type sums $K_{\rho}^X(m,n,c)$ are well known to be related to Sali\'e-type sums
(for example see Proposition 5 of \cite{Kohnen}).
These Sali\'e-type sums are of the form
$$
S^X_{\rho}(m,n,c)=\sum_{\substack{x\pmod c\\ x^2\equiv -D(m,n)\pmod{c}}} \epsilon_{\rho}^X(m,n)\cdot e\left (\frac{\beta^X x}{c}\right),
$$
where $\epsilon_{\rho}^X(m,n)$ is a root of unity, $-D(m,n)$ is a discriminant of a positive definite binary quadratic form, and
$\beta^X$ is a nonzero positive rational number.

These Sali\'e sums may then be estimated using the equidistribution of CM points with discriminant $-D(m,n)$.
This process was first introduced by Hooley \cite{Hooley},
 and it was first applied to the coefficients of weight $\frac{1}{2}$ mock modular forms by
 Bringmann and Ono\cite{BringmannOno2006}.
 Gannon explains how to make effective the estimates for sums of this shape in \S4 of \cite{Gannon:2012ck}, thereby reducing the proof of the
  $M_{24}$ case of umbral moonshine to a finite calculation. In particular, in equations (4.6-4.10) of \cite{Gannon:2012ck} Gannon shows how to bound coefficients of the form (\ref{Multiplicities}) in terms of the Selberg--Kloosterman zeta function, which is bounded in turn in his proof of Theorem 3 of \cite{Gannon:2012ck}.
 We 
 follow Gannon's proof  \emph{mutatis mutandis}. We find, for most root systems, that the coefficients of each multiplicity generating function are positive beyond the 390th coefficient. In the worst case, for the root system $X=E_8^3$, we find that the coefficients are positive beyond the 1780th coefficient. Moreover, the coefficients exhibit subexponential growth.
 A finite computer calculation in $\tt{sage}$ has verified the non-negativity of the finitely many remaining coefficients.
\end{proof}

\begin{remark}
It turns out that the estimates required for proving non-negativity are the worst for the $X=E_8^3$ case which required $1780$ coefficients.
  \end{remark}

\newpage

\appendix
\section{The Umbral Groups}\label{sec:gps}

In this section we present the facts about the umbral groups that we have used in establishing the main results of this paper. We recall (from \cite{MUM}) their construction in terms of Niemeier root systems in \S\ref{sec:gps:cnst}, and we reproduce their character tables (appearing also in \cite{MUM}) in \S\ref{sec:gps:chars}. Note that we use the abbreviations $a_n:=\sqrt{-n}$ and $b_n:=(-1+\sqrt{-n})/2$ in the tables of \S\ref{sec:gps:chars}.

The root system description of the umbral groups (cf. \S\ref{sec:gps:cnst}) gives rise to certain characters called {\em twisted Euler characters} which we recall (from \cite{MUM}) in \S\ref{sec:chars:eul}. The data appearing in \S\ref{sec:chars:eul} plays an important role in \S\ref{sec:mts:shd}, where we use it to describe the shadows $S^X_g$ of the umbral McKay-Thompson series $H^X_g$ explicitly.

\subsection{Construction}\label{sec:gps:cnst}

As mentioned in \S\ref{sec:intro}, there are exactly $24$ self-dual even positive-definite lattices of rank $24$ up to isomorphism, according to the classification of Niemeier \cite{Niemeier} (cf. also \cite{MR666350,MR558941}). Such a lattice $L$ is determined up to isomorphism by its {\em root system} $L_2:=\{\alpha\in L\mid \langle\alpha,\alpha\rangle=2\}$. The unique example without roots is the Leech lattice. We refer to the remaining $23$ as the {\em Niemeier lattices}, and we call a root system $X$ a {\em Niemeier root system} if it occurs as the root system of a Niemeier lattice.

The simple components of Niemeier root systems are root systems of ADE type, and it turns out that the simple components of a Niemeier root system $X$ all have the same Coxeter number. Define $m^X$ to be the Coxeter number of any simple component of $X$, and call this the {\em Coxeter number} of $X$.

For $X$ a Niemeier root system write $N^X$ for the corresponding Niemeier lattice. The {\em umbral group} attached to $X$ is defined by setting
\begin{gather}	
G^X:=\Aut(N^X)/W^X
\end{gather}
where $W^X$ is the normal subgroup of $\Aut(N^X)$ generated by reflections in root vectors. 

Observe that $G^X$ acts as permutations on the simple components of $X$. In general this action is not faithful, so define $\overline{G}^X$ to be the quotient of $G^X$ by its kernel. It turns out that the level of the mock modular form $H^X_g$ attached to $g\in G^X$ is given by the order, denoted $n_g$, of the image of $g$ in $\bar{G}^X$. (Cf. \S\ref{sec:chars:eul} for the values $n_g$.)

The Niemeier root systems and their corresponding umbral groups are described in Table \ref{tab:mugs}. The root systems are given in terms of their simple components of ADE type. Here $D_{10}E_7^2$, for example, means the direct sum of one copy of the $D_{10}$ root system and two copies of the $E_7$ root system. The symbol $\ell$ is called the {\em lambency} of $X$, and the Coxeter number $m^X$ appears as the first summand of $\ell$.

\begin{table}[ht]
\captionsetup{font=small}
\begin{center}
\caption{The Umbral Groups}\label{tab:mugs}
\medskip

\begin{tabular}{ccccccccccc}
\multicolumn{1}{c|}{$X$}&$A_1^{24}$&$A_2^{12}$&$A_3^8$&$A_4^6$&$A_5^4D_4$&$A_6^4$&$A_7^2D_5^2$\\
	\cmidrule{1-8}
\multicolumn{1}{c|}{$\ll$}&	2&	3&	4&	5&	6&	7&	8\\
	\cmidrule{1-8}
\multicolumn{1}{c|}{$G^{X}$}&			$M_{24}$&	$2.M_{12}$&	$2.\AGL_3(2)$&	$\GL_2(5)/2$&	$\GL_2(3)$&	$\SL_2(3)$&$\Dih_4$\\
\multicolumn{1}{c|}{$\bar{G}^{X}$}&		$M_{24}$&	$M_{12}$&	$\AGL_3(2)$&		$\PGL_2(5)$&	$\PGL_2(3)$&	$\PSL_2(3)$&$2^2$\\
 \\
\multicolumn{1}{c|}{$X$}&$A_8^3$&$A_9^2D_6$&$A_{11}D_7E_6$&$A_{12}^2$&$A_{15}D_9$&$A_{17}E_7$&$A_{24}$\\
	\cmidrule{1-8}
\multicolumn{1}{c|}{$\ll$}&	9&	10& 12&	13&	16&	18&	25\\
	\cmidrule{1-8}
\multicolumn{1}{c|}{$G^{X}$}&$\Dih_6$&$4$&			$2$&	$4$& $2$&$2$&$2$\\
\multicolumn{1}{c|}{$\bar{G}^{X}$}&$\Sym_3$&$2$&		$1$&$2$& $1$&$1$&$1$\\
\\
\multicolumn{1}{c|}{$X$}&$D_4^{6}$&$D_6^{4}$&$D_8^3$&$D_{10}E_7^2$&$D_{12}^2$&$D_{16}E_8$&$D_{24}$\\
	\cmidrule{1-8}
\multicolumn{1}{c|}{$\ll$}& 6+3&	10+5&	14+7&	18+9&	22+11&	30+15&	46+23\\
	\cmidrule{1-8}
\multicolumn{1}{c|}{$G^{X}$}&			$3.\Sym_6$&	$\Sym_4$&	$\Sym_3$&	$2$&	$2$&	$1$&$1$\\
\multicolumn{1}{c|}{$\bar{G}^{X}$}&		$\Sym_6$&	$\Sym_4$&	$\Sym_3$&	$2$&	$2$&	$1$&$1$\\\\ 
\\
\multicolumn{1}{c|}{$X$}&$E_6^4$&$E_8^3$\\
	\cmidrule{1-3}
\multicolumn{1}{c|}{$\ll$}	&12+4&	30+6,10,15 &
\\
	\cmidrule{1-3}
\multicolumn{1}{c|}{$G^{X}$}&	$\GL_2(3)$&$\Sym_3$&\\
\multicolumn{1}{c|}{$\bar{G}^{X}$}&	$\PGL_2(3)$&$\Sym_3$&
\end{tabular}
\end{center}
\end{table}

In the descriptions of the umbral groups $G^X$, and their permutation group quotients $\bar{G}^X$, we write $M_{24}$ and $M_{12}$ for the sporadic simple groups of Mathieu which act quintuply transitively on $24$ and $12$ points, respectively. (Cf. e.g. \cite{ATLAS}.) We write $\GL_n(q)$ for the general linear group of a vector space of dimension $n$ over a field with $q$ elements, and $\SL_n(q)$ is the subgroup of linear transformations with determinant $1$, \&c. The symbols $\AGL_3(2)$ denote the {\em affine general linear group}, obtained by adjoining translations to $\GL_3(2)$. We write $\Dih_n$ for the dihedral group of order $2n$, and $\Sym_n$ denotes the symmetric group on $n$ symbols. We use $n$ as a shorthand for a cyclic group of order $n$. 

We also use the notational convention of writing $A.B$ to denote the middle term in a short exact sequence $1\to A\to A.B\to B\to 1$. This introduces some ambiguity which is nonetheless easily navigated in practice. For example, $2.M_{12}$ is the unique (up to isomorphism) double cover of $M_{12}$ which is not $2\times M_{12}$. The group $\AGL_3(2)$ naturally embeds in $\GL_4(2)$, which in turn admits a unique (up to isomorphism) double cover $2.\GL_4(2)$ which is not a direct product. The group we denote $2.\AGL_3(2)$ is the 
preimage of $\AGL_3(2)<\GL_4(2)$ in $2.\GL_4(2)$ under the natural projection.

\begin{sidewaystable}
\subsection{Character Tables}\label{sec:gps:chars}
\begin{center}
\caption{Character table of $G^{X}\simeq M_{24}$, $X=A_1^{24}$}\label{tab:chars:irr:2}
\smallskip
\begin{small}
\begin{tabular}{c@{ }|c|@{ }r@{ }r@{ }r@{ }r@{ }r@{ }r@{ }r@{ }r@{ }r@{ }r@{ }r@{ }r@{ }r@{ }r@{ }r@{ }r@{ }r@{ }r@{ }r@{ }r@{ }r@{ }r@{ }r@{ }r@{ }r@{ }r} \toprule
$[g]$	&{FS}&1A	&2A	&2B	&3A	&3B	&4A	&4B	&4C	&5A	&6A	&6B	&7A	&7B	&8A	&10A	&11A	&12A	&12B	&14A	&14B	&15A	&15B	&21A	&21B	&23A	&23B	\\
	\midrule
$[g^{2}]$	&&1A	&1A	&1A	&3A	&3B	&2A	&2A	&2B	&5A	&3A	&3B	&7A	&7B	&4B	&5A	&11A	&6A	&6B	&7A	&7B	&15A	&15B	&21A	&21B	&23A	&23B	\\
$[g^{3}]$	&&1A	&2A	&2B	&1A	&1A	&4A	&4B	&4C	&5A	&2A	&2B	&7B	&7A	&8A	&10A	&11A	&4A	&4C	&14B	&14A	&5A	&5A	&7B	&7A	&23A	&23B	\\
$[g^{5}]$	&&1A	&2A	&2B	&3A	&3B	&4A	&4B	&4C	&1A	&6A	&6B	&7B	&7A	&8A	&2B	&11A	&12A	&12B	&14B	&14A	&3A	&3A	&21B	&21A	&23B	&23A	\\
$[g^{7}]$	&&1A	&2A	&2B	&3A	&3B	&4A	&4B	&4C	&5A	&6A	&6B	&1A	&1A	&8A	&10A	&11A	&12A	&12B	&2A	&2A	&15B	&15A	&3B	&3B	&23B	&23A	\\
$[g^{11}]$	&&1A	&2A	&2B	&3A	&3B	&4A	&4B	&4C	&5A	&6A	&6B	&7A	&7B	&8A	&10A	&1A	&12A	&12B	&14A	&14B	&15B	&15A	&21A	&21B	&23B	&23A	\\
$[g^{23}]$	&&1A	&2A	&2B	&3A	&3B	&4A	&4B	&4C	&5A	&6A	&6B	&7A	&7B	&8A	&10A	&11A	&12A	&12B	&14A	&14B	&15A	&15B	&21A	&21B	&1A	&1A	\\
	\midrule
$\chi_{1}$	&$+$&$1$	&$1$	&$1$	&$1$	&$1$	&$1$	&$1$	&$1$	&$1$	&$1$	&$1$	&$1$	&$1$	&$1$	&$1$	&$1$	&$1$	&$1$	&$1$	&$1$	&$1$	&$1$	&$1$	&$1$	&$1$	&$1$	\\
$\chi_{2}$	&$+$&$23$	&$7$	&$-1$	&$5$	&$-1$	&$-1$	&$3$	&$-1$	&$3$	&$1$	&$-1$	&$2$	&$2$	&$1$	&$-1$	&$1$	&$-1$	&$-1$	&$0$	&$0$	&$0$	&$0$	&$-1$	&$-1$	&$0$	&$0$	\\
$\chi_{3}$	&$\circ$&$45$	&$-3$	&$5$	&$0$	&$3$	&$-3$	&$1$	&$1$	&$0$	&$0$	&$-1$	&$b_7$	&$\overline{b_7}$	&$-1$	&$0$	&$1$	&$0$	&$1$	&$-b_7$	&$-\overline{b_7}$	&$0$	&$0$	&$b_7$	&$\overline{b_7}$	&$-1$	&$-1$	\\
$\chi_{4}$	&$\circ$&$45$	&$-3$	&$5$	&$0$	&$3$	&$-3$	&$1$	&$1$	&$0$	&$0$	&$-1$	&$\overline{b_7}$	&$b_7$	&$-1$	&$0$	&$1$	&$0$	&$1$	&$-\overline{b_7}$	&$-b_7$	&$0$	&$0$	&$\overline{b_7}$	&$b_7$	&$-1$	&$-1$	\\
$\chi_{5}$	&$\circ$&$231$	&$7$	&$-9$	&$-3$	&$0$	&$-1$	&$-1$	&$3$	&$1$	&$1$	&$0$	&$0$	&$0$	&$-1$	&$1$	&$0$	&$-1$	&$0$	&$0$	&$0$	&$b_{15}$	&$\overline{b_{15}}$	&$0$	&$0$	&$1$	&$1$	\\
$\chi_{6}$	&$\circ$&$231$	&$7$	&$-9$	&$-3$	&$0$	&$-1$	&$-1$	&$3$	&$1$	&$1$	&$0$	&$0$	&$0$	&$-1$	&$1$	&$0$	&$-1$	&$0$	&$0$	&$0$	&$\overline{b_{15}}$	&$b_{15}$	&$0$	&$0$	&$1$	&$1$	\\
$\chi_{7}$	&$+$&$252$	&$28$	&$12$	&$9$	&$0$	&$4$	&$4$	&$0$	&$2$	&$1$	&$0$	&$0$	&$0$	&$0$	&$2$	&$-1$	&$1$	&$0$	&$0$	&$0$	&$-1$	&$-1$	&$0$	&$0$	&$-1$	&$-1$	\\
$\chi_{8}$	&$+$&$253$	&$13$	&$-11$	&$10$	&$1$	&$-3$	&$1$	&$1$	&$3$	&$-2$	&$1$	&$1$	&$1$	&$-1$	&$-1$	&$0$	&$0$	&$1$	&$-1$	&$-1$	&$0$	&$0$	&$1$	&$1$	&$0$	&$0$	\\
$\chi_{9}$	&$+$&$483$	&$35$	&$3$	&$6$	&$0$	&$3$	&$3$	&$3$	&$-2$	&$2$	&$0$	&$0$	&$0$	&$-1$	&$-2$	&$-1$	&$0$	&$0$	&$0$	&$0$	&$1$	&$1$	&$0$	&$0$	&$0$	&$0$	\\
$\chi_{10}$&$\circ$&$770$	&$-14$	&$10$	&$5$	&$-7$	&$2$	&$-2$	&$-2$	&$0$	&$1$	&$1$	&$0$	&$0$	&$0$	&$0$	&$0$	&$-1$	&$1$	&$0$	&$0$	&$0$	&$0$	&$0$	&$0$	&$b_{23}$	&$\overline{b_{23}}$	\\
$\chi_{11}$&$\circ$&$770$	&$-14$	&$10$	&$5$	&$-7$	&$2$	&$-2$	&$-2$	&$0$	&$1$	&$1$	&$0$	&$0$	&$0$	&$0$	&$0$	&$-1$	&$1$	&$0$	&$0$	&$0$	&$0$	&$0$	&$0$	&$\overline{b_{23}}$	&$b_{23}$	\\
$\chi_{12}$&$\circ$&$990$	&$-18$	&$-10$	&$0$	&$3$	&$6$	&$2$	&$-2$	&$0$	&$0$	&$-1$	&$b_7$	&$\overline{b_7}$	&$0$	&$0$	&$0$	&$0$	&$1$	&$b_7$	&$\overline{b_7}$	&$0$	&$0$	&$b_7$	&$\overline{b_7}$	&$1$	&$1$	\\
$\chi_{13}$&$\circ$&$990$	&$-18$	&$-10$	&$0$	&$3$	&$6$	&$2$	&$-2$	&$0$	&$0$	&$-1$	&$\overline{b_7}$	&$b_7$	&$0$	&$0$	&$0$	&$0$	&$1$	&$\overline{b_7}$	&$b_7$	&$0$	&$0$	&$\overline{b_7} $	&$b_7$	&$1$	&$1$	\\
$\chi_{14}$&$+$&$1035$	&$27$	&$35$	&$0$	&$6$	&$3$	&$-1$	&$3$	&$0$	&$0$	&$2$	&$-1$	&$-1$	&$1$	&$0$	&$1$	&$0$	&$0$	&$-1$	&$-1$	&$0$	&$0$	&$-1$	&$-1$	&$0$	&$0$	\\
$\chi_{15}$&$\circ$&$1035$	&$-21$	&$-5$	&$0$	&$-3$	&$3$	&$3$	&$-1$	&$0$	&$0$	&$1$	&$2b_7$	&$2\overline{b_7}$	&$-1$	&$0$	&$1$	&$0$	&$-1$	&$0$	&$0$	&$0$	&$0$	&$-b_7$	&$-\overline{b_7} $	&$0$	&$0$	\\
$\chi_{16}$&$\circ$&$1035$	&$-21$	&$-5$	&$0$	&$-3$	&$3$	&$3$	&$-1$	&$0$	&$0$	&$1$	&$2\overline{b_7}$	&$2b_7$	&$-1$	&$0$	&$1$	&$0$	&$-1$	&$0$	&$0$	&$0$	&$0$	&$-\overline{b_7}$	&$-b_7 $	&$0$	&$0$	\\
$\chi_{17}$&$+$&$1265$	&$49$	&$-15$	&$5$	&$8$	&$-7$	&$1$	&$-3$	&$0$	&$1$	&$0$	&$-2$	&$-2$	&$1$	&$0$	&$0$	&$-1$	&$0$	&$0$	&$0$	&$0$	&$0$	&$1$	&$1$	&$0$	&$0$	\\
$\chi_{18}$&$+$&$1771$	&$-21$	&$11$	&$16$	&$7$	&$3$	&$-5$	&$-1$	&$1$	&$0$	&$-1$	&$0$	&$0$	&$-1$	&$1$	&$0$	&$0$	&$-1$	&$0$	&$0$	&$1$	&$1$	&$0$	&$0$	&$0$	&$0$	\\
$\chi_{19}$&$+$&$2024$	&$8$	&$24$	&$-1$	&$8$	&$8$	&$0$	&$0$	&$-1$	&$-1$	&$0$	&$1$	&$1$	&$0$	&$-1$	&$0$	&$-1$	&$0$	&$1$	&$1$	&$-1$	&$-1$	&$1$	&$1$	&$0$	&$0$	\\
$\chi_{20}$&$+$&$2277$	&$21$	&$-19$	&$0$	&$6$	&$-3$	&$1$	&$-3$	&$-3$	&$0$	&$2$	&$2$	&$2$	&$-1$	&$1$	&$0$	&$0$	&$0$	&$0$	&$0$	&$0$	&$0$	&$-1$	&$-1$	&$0$	&$0$	\\
$\chi_{21}$&$+$&$3312$	&$48$	&$16$	&$0$	&$-6$	&$0$	&$0$	&$0$	&$-3$	&$0$	&$-2$	&$1$	&$1$	&$0$	&$1$	&$1$	&$0$	&$0$	&$-1$	&$-1$	&$0$	&$0$	&$1$	&$1$	&$0$	&$0$	\\
$\chi_{22}$&$+$&$3520$	&$64$	&$0$	&$10$	&$-8$	&$0$	&$0$	&$0$	&$0$	&$-2$	&$0$	&$-1$	&$-1$	&$0$	&$0$	&$0$	&$0$	&$0$	&$1$	&$1$	&$0$	&$0$	&$-1$	&$-1$	&$1$	&$1$	\\
$\chi_{23}$&$+$&$5313$	&$49$	&$9$	&$-15$	&$0$	&$1$	&$-3$	&$-3$	&$3$	&$1$	&$0$	&$0$	&$0$	&$-1$	&$-1$	&$0$	&$1$	&$0$	&$0$	&$0$	&$0$	&$0$	&$0$	&$0$	&$0$	&$0$	\\
$\chi_{24}$&$+$&$5544$	&$-56$	&$24$	&$9$	&$0$	&$-8$	&$0$	&$0$	&$-1$	&$1$	&$0$	&$0$	&$0$	&$0$	&$-1$	&$0$	&$1$	&$0$	&$0$	&$0$	&$-1$	&$-1$	&$0$	&$0$	&$1$	&$1$	\\
$\chi_{25}$&$+$&$5796$	&$-28$	&$36$	&$-9$	&$0$	&$-4$	&$4$	&$0$	&$1$	&$-1$	&$0$	&$0$	&$0$	&$0$	&$1$	&$-1$	&$-1$	&$0$	&$0$	&$0$	&$1$	&$1$	&$0$	&$0$	&$0$	&$0$	\\
$\chi_{26}$&$+$&$10395$	&$-21$	&$-45$	&$0$	&$0$	&$3$	&$-1$	&$3$	&$0$	&$0$	&$0$	&$0$	&$0$	&$1$	&$0$	&$0$	&$0$	&$0$	&$0$	&$0$	&$0$	&$0$	&$0$	&$0$	&$-1$	&$-1$	\\\bottomrule
\end{tabular}
\end{small}
\end{center}
\end{sidewaystable}

\begin{sidewaystable}

\begin{center}
\caption{Character table of $G^{X}\simeq 2.M_{12}$, $X=A_2^{12}$}\label{tab:chars:irr:3}

\smallskip

\begin{tabular}{c@{ }|c|@{\;}r@{ }r@{ }r@{ }r@{ }r@{ }r@{ }r@{ }r@{ }r@{ }r@{ }r@{ }r@{ }r@{ }r@{ }r@{ }r@{ }r@{ }r@{ }r@{ }r@{ }r@{ }r@{ }r@{ }r@{ }r@{ }r}\toprule
$[g]$&FS&   	1A&   	2A&   	4A&   	2B&   	2C&   	3A&   	6A&   	3B&   	6B&   	4B&   	4C&   	5A&   	10A&   	12A&   	6C&   	6D&   	8A&   	8B&   	 8C&   8D&   	20A&   	20B&   	11A&   	22A&   	11B&   	22B\\ 
	\midrule
$[g^2]$&&	1A&			1A&			2A&			1A&			1A&			3A&			3A&			3B&			3B&			2B&			2B&			5A&			5A&			6B&			3A&			3A&			4B&			4B&			4C&			4C&			10A&		10A&		11B&		11B&		11A&		11A\\		
$[g^3]$& &  1A&   		2A&   		4A&   		2B&   		2C&   		1A&   		2A&   		1A&   		2A&   		4B& 	  		4C&   		5A&   		10A&   		4A&   		2B&   		2C&   		8A&   		8B&   	 	8C&  		8D&   		20A&   		20B&   		11A&   		22A&   		11B&   		22B\\
$[g^5]$& &  1A&   		2A&   		4A&   		2B&   		2C&   		3A&   		6A&   		3B&   		6B&   		4B& 	  		4C&   		1A&   		2A&   		12A&   		6C&   		6D&   		8B&   		8A&   	 	8D&  		8C&   		4A&   		4A&   		11A&   		22A&   		11B&   		22B\\
$[g^{11}]$&&   1A&   		2A&   		4A&   		2B&   		2C&   		3A&   		6A&   		3B&   		6B&   		4B& 	  		4C&   		5A&   		10A&   		12A&   		6C&   		6D&   		8A&   		8B&   	 	8C&  		8D&   		20B&   		20A&   		1A&   		2A&   		1A&   		2A\\
	\midrule
$\chi_{1}$&$+$&   $1$&   $1$&   $1$&   $1$&   $1$&   $1$&   $1$&   $1$&   $1$&   $
1$&   $1$&   $1$&   $1$&   $1$&   $1$&   $1$&   $1$&   $1$&   $1$&   $1$&   $
1$&   $1$&   $1$&   $1$&   $1$&   $1$\\
$\chi_{2}$&$+$&   $11$&   $11$&   $-1$&   $3$&   $3$&   $2$&   $2$&   $-1$&   $
-1$&   $-1$&   $3$&   $1$&   $1$&   $-1$&   $0$&   $0$&   $-1$&   $-1$&   $
1$&   $1$&   $-1$&   $-1$&   $0$&   $0$&   $0$&   $0$\\
$\chi_{3}$&$+$&   $11$&   $11$&   $-1$&   $3$&   $3$&   $2$&   $2$&   $-1$&   $
-1$&   $3$&   $-1$&   $1$&   $1$&   $-1$&   $0$&   $0$&   $1$&   $1$&   $
-1$&   $-1$&   $-1$&   $-1$&   $0$&   $0$&   $0$&   $0$\\
$\chi_{4}$&$\circ$&   $16$&   $16$&   $4$&   $0$&   $0$&   $-2$&   $-2$&   $1$&   $
1$&   $0$&   $0$&   $1$&   $1$&   $1$&   $0$&   $0$&   $0$&   $0$&   $0$&   $
0$&   $-1$&   $-1$&   $b_{11}$&   $
b_{11}$&   $\overline{b_{11}}$&   $\overline{b_{11}}$\\
$\chi_{5}$&$\circ$&   $16$&   $16$&   $4$&   $0$&   $0$&   $-2$&   $-2$&   $1$&   $
1$&   $0$&   $0$&   $1$&   $1$&   $1$&   $0$&   $0$&   $0$&   $0$&   $0$&   $
0$&   $-1$&   $-1$&   $\overline{b_{11}}$&   $
\overline{b_{11}}$&   $b_{11}$&   $b_{11}$\\
$\chi_{6}$&$+$&   $45$&   $45$&   $5$&   $-3$&   $-3$&   $0$&   $0$&   $3$&   $
3$&   $1$&   $1$&   $0$&   $0$&   $-1$&   $0$&   $0$&   $-1$&   $-1$&   $
-1$&   $-1$&   $0$&   $0$&   $1$&   $1$&   $1$&   $1$\\
$\chi_{7}$&$+$&   $54$&   $54$&   $6$&   $6$&   $6$&   $0$&   $0$&   $0$&   $
0$&   $2$&   $2$&   $-1$&   $-1$&   $0$&   $0$&   $0$&   $0$&   $0$&   $
0$&   $0$&   $1$&   $1$&   $-1$&   $-1$&   $-1$&   $-1$\\
$\chi_{8}$&$+$&   $55$&   $55$&   $-5$&   $7$&   $7$&   $1$&   $1$&   $1$&   $
1$&   $-1$&   $-1$&   $0$&   $0$&   $1$&   $1$&   $1$&   $-1$&   $-1$&   $
-1$&   $-1$&   $0$&   $0$&   $0$&   $0$&   $0$&   $0$\\
$\chi_{9}$&$+$&   $55$&   $55$&   $-5$&   $-1$&   $-1$&   $1$&   $1$&   $1$&   $
1$&   $3$&   $-1$&   $0$&   $0$&   $1$&   $-1$&   $-1$&   $-1$&   $-1$&   $
1$&   $1$&   $0$&   $0$&   $0$&   $0$&   $0$&   $0$\\
$\chi_{10}$&$+$&   $55$&   $55$&   $-5$&   $-1$&   $-1$&   $1$&   $1$&   $1$&   $
1$&   $-1$&   $3$&   $0$&   $0$&   $1$&   $-1$&   $-1$&   $1$&   $1$&   $
-1$&   $-1$&   $0$&   $0$&   $0$&   $0$&   $0$&   $0$\\
$\chi_{11}$&$+$&   $66$&   $66$&   $6$&   $2$&   $2$&   $3$&   $3$&   $0$&   $
0$&   $-2$&   $-2$&   $1$&   $1$&   $0$&   $-1$&   $-1$&   $0$&   $0$&   $
0$&   $0$&   $1$&   $1$&   $0$&   $0$&   $0$&   $0$\\
$\chi_{12}$&$+$&   $99$&   $99$&   $-1$&   $3$&   $3$&   $0$&   $0$&   $3$&   $
3$&   $-1$&   $-1$&   $-1$&   $-1$&   $-1$&   $0$&   $0$&   $1$&   $1$&   $
1$&   $1$&   $-1$&   $-1$&   $0$&   $0$&   $0$&   $0$\\
$\chi_{13}$&$+$&   $120$&   $120$&   $0$&   $-8$&   $-8$&   $3$&   $3$&   $0$&   $
0$&   $0$&   $0$&   $0$&   $0$&   $0$&   $1$&   $1$&   $0$&   $0$&   $0$&   $
0$&   $0$&   $0$&   $-1$&   $-1$&   $-1$&   $-1$\\
$\chi_{14}$&$+$&   $144$&   $144$&   $4$&   $0$&   $0$&   $0$&   $0$&   $-3$&   $
-3$&   $0$&   $0$&   $-1$&   $-1$&   $1$&   $0$&   $0$&   $0$&   $0$&   $
0$&   $0$&   $-1$&   $-1$&   $1$&   $1$&   $1$&   $1$\\
$\chi_{15}$&$+$&   $176$&   $176$&   $-4$&   $0$&   $0$&   $-4$&   $-4$&   $
-1$&   $-1$&   $0$&   $0$&   $1$&   $1$&   $-1$&   $0$&   $0$&   $0$&   $
0$&   $0$&   $0$&   $1$&   $1$&   $0$&   $0$&   $0$&   $0$\\
$\chi_{16}$&$\circ$&   $10$&   $-10$&   $0$&   $-2$&   $2$&   $1$&   $-1$&   $-2$&   $
2$&   $0$&   $0$&   $0$&   $0$&   $0$&   $1$&   $-1$&   $a_{2}$&   $
\overline{a_{2}}$&   $a_{2}$&   $\overline{a_{2}}$&   $0$&   $0$&   $-1$&   $
1$&   $-1$&   $1$\\
$\chi_{17}$&$\circ$&   $10$&   $-10$&   $0$&   $-2$&   $2$&   $1$&   $-1$&   $-2$&   $
2$&   $0$&   $0$&   $0$&   $0$&   $0$&   $1$&   $-1$&   $\overline{a_{2}}$&   $
a_{2}$&   $\overline{a_{2}}$&   $a_{2}$&   $0$&   $0$&   $-1$&   $
1$&   $-1$&   $1$\\
$\chi_{18}$&$+$&   $12$&   $-12$&   $0$&   $4$&   $-4$&   $3$&   $-3$&   $0$&   $
0$&   $0$&   $0$&   $2$&   $-2$&   $0$&   $1$&   $-1$&   $0$&   $0$&   $
0$&   $0$&   $0$&   $0$&   $1$&   $-1$&   $1$&   $-1$\\
$\chi_{19}$&$-$&   $32$&   $-32$&   $0$&   $0$&   $0$&   $-4$&   $4$&   $2$&   $
-2$&   $0$&   $0$&   $2$&   $-2$&   $0$&   $0$&   $0$&   $0$&   $0$&   $
0$&   $0$&   $0$&   $0$&   $-1$&   $1$&   $-1$&   $1$\\
$\chi_{20}$&$\circ$&   $44$&   $-44$&   $0$&   $4$&   $-4$&   $-1$&   $1$&   $2$&   $
-2$&   $0$&   $0$&   $-1$&   $1$&   $0$&   $1$&   $-1$&   $0$&   $0$&   $
0$&   $0$&   $a_{5}$&   $\overline{a_{5}}$&   $0$&   $0$&   $0$&   $0$\\
$\chi_{21}$&$\circ$&   $44$&   $-44$&   $0$&   $4$&   $-4$&   $-1$&   $1$&   $2$&   $
-2$&   $0$&   $0$&   $-1$&   $1$&   $0$&   $1$&   $-1$&   $0$&   $0$&   $
0$&   $0$&   $\overline{a_{5}}$&   $a_{5}$&   $0$&   $0$&   $0$&   $0$\\
$\chi_{22}$&$\circ$&   $110$&   $-110$&   $0$&   $-6$&   $6$&   $2$&   $-2$&   $
2$&   $-2$&   $0$&   $0$&   $0$&   $0$&   $0$&   $0$&   $0$&   $
a_{2}$&   $\overline{a_{2}}$&   $\overline{a_{2}}$&   $a_{2}$&   $0$&   $
0$&   $0$&   $0$&   $0$&   $0$\\
$\chi_{23}$&$\circ$&   $110$&   $-110$&   $0$&   $-6$&   $6$&   $2$&   $-2$&   $
2$&   $-2$&   $0$&   $0$&   $0$&   $0$&   $0$&   $0$&   $0$&   $
\overline{a_{2}}$&   $a_{2}$&   $a_{2}$&   $\overline{a_{2}}$&   $0$&   $
0$&   $0$&   $0$&   $0$&   $0$\\
$\chi_{24}$&$+$&   $120$&   $-120$&   $0$&   $8$&   $-8$&   $3$&   $-3$&   $
0$&   $0$&   $0$&   $0$&   $0$&   $0$&   $0$&   $-1$&   $1$&   $0$&   $0$&   $
0$&   $0$&   $0$&   $0$&   $-1$&   $1$&   $-1$&   $1$\\
$\chi_{25}$&$\circ$&   $160$&   $-160$&   $0$&   $0$&   $0$&   $-2$&   $2$&   $
-2$&   $2$&   $0$&   $0$&   $0$&   $0$&   $0$&   $0$&   $0$&   $0$&   $0$&   $
0$&   $0$&   $0$&   $0$&   $-b_{11}$&   $
b_{11}$&   $-\overline{b_{11}}$&   $\overline{b_{11}}$\\
$\chi_{26}$&$\circ$&   $160$&   $-160$&   $0$&   $0$&   $0$&   $-2$&   $2$&   $
-2$&   $2$&   $0$&   $0$&   $0$&   $0$&   $0$&   $0$&   $0$&   $0$&   $0$&   $
0$&   $0$&   $0$&   $0$&   $-\overline{b_{11}}$&   $
\overline{b_{11}}$&   $-b_{11}$&   $b_{11}$\\ \bottomrule
\end{tabular}
\end{center}

\end{sidewaystable}

\begin{table}
\begin{center}
\caption{Character table of ${G}^{X}\simeq 2.\AGL_3(2)$, $X=A_3^8$}\label{tab:chars:irr:4}

\smallskip
\begin{small}
\begin{tabular}{c|c|rrrrrrrrrrrrrrrrrrrrrrrrrrr} \toprule
$[g]$&FS&	   1A&   2A&   2B&   4A&   4B&   2C&   3A&   6A&   6B&   6C&   8A&   4C&   7A&   14A&   7B&   14B
	\\ \midrule
$[g^2]$&&   	1A&   1A&   	1A&   	2A&			2B&			1A&   	3A&   	3A&   		3A&   	3A&   	4A&   	2C&   	7A&		7A&   7B&   7B\\ 
$[g^3]$&&   	1A&   2A&   	2B&   	4A&			4B&			2C&   	1A&   	2A&   		2B&   	2B&   	8A&   	4C&   	7B&   	14B&   7A&   14A\\ 
$[g^7]$&&   	1A&   2A&   	2B&   	4A& 			4B&  		2C&   	3A&   	6A&   		6B&   	6C&   	8A&   	4C&   	1A&   	2A&   1A&   2A\\ 
	\midrule	
$\chi_{1}$&$+$&   $1$&   $1$&   $1$&   $1$&   $1$&   $1$&   $1$&   $1$&   $1$&   $
1$&   $1$&   $1$&   $1$&   $1$&   $1$&   $1$\\
$\chi_{2}$&$\circ$&   $3$&   $3$&   $3$&   $-1$&   $-1$&   $-1$&   $0$&   $0$&   $
0$&   $0$&   $1$&   $1$&   $b_{7}$&   $b_{7}$&   $\overline{b_{7}}$&   $\overline{b_{7}}$\\
$\chi_{3}$&$\circ$&   $3$&   $3$&   $3$&   $-1$&   $-1$&   $-1$&   $0$&   $0$&   $
0$&   $0$&   $1$&   $1$&   $\overline{b_{7}}$&   $\overline{b_{7}}$&   $b_{7}$&   $b_{7}$\\
$\chi_{4}$&$+$&   $6$&   $6$&   $6$&   $2$&   $2$&   $2$&   $0$&   $0$&   $0$&   $
0$&   $0$&   $0$&   $-1$&   $-1$&   $-1$&   $-1$\\
$\chi_{5}$&$+$&   $7$&   $7$&   $7$&   $-1$&   $-1$&   $-1$&   $1$&   $1$&   $
1$&   $1$&   $-1$&   $-1$&   $0$&   $0$&   $0$&   $0$\\
$\chi_{6}$&$+$&   $8$&   $8$&   $8$&   $0$&   $0$&   $0$&   $-1$&   $-1$&   $
-1$&   $-1$&   $0$&   $0$&   $1$&   $1$&   $1$&   $1$\\
$\chi_{7}$&$+$&   $7$&   $7$&   $-1$&   $3$&   $-1$&   $-1$&   $1$&   $1$&   $
-1$&   $-1$&   $1$&   $-1$&   $0$&   $0$&   $0$&   $0$\\
$\chi_{8}$&$+$&   $7$&   $7$&   $-1$&   $-1$&   $-1$&   $3$&   $1$&   $1$&   $
-1$&   $-1$&   $-1$&   $1$&   $0$&   $0$&   $0$&   $0$\\
$\chi_{9}$&$+$&   $14$&   $14$&   $-2$&   $2$&   $-2$&   $2$&   $-1$&   $-1$&   $
1$&   $1$&   $0$&   $0$&   $0$&   $0$&   $0$&   $0$\\
$\chi_{10}$&$+$&   $21$&   $21$&   $-3$&   $1$&   $1$&   $-3$&   $0$&   $0$&   $
0$&   $0$&   $-1$&   $1$&   $0$&   $0$&   $0$&   $0$\\
$\chi_{11}$&$+$&   $21$&   $21$&   $-3$&   $-3$&   $1$&   $1$&   $0$&   $0$&   $
0$&   $0$&   $1$&   $-1$&   $0$&   $0$&   $0$&   $0$\\
$\chi_{12}$&$+$&   $8$&   $-8$&   $0$&   $0$&   $0$&   $0$&   $2$&   $-2$&   $
0$&   $0$&   $0$&   $0$&   $1$&   $-1$&   $1$&   $-1$\\
$\chi_{13}$&$\circ$&   $8$&   $-8$&   $0$&   $0$&   $0$&   $0$&   $-1$&   $1$&   $a_{3}$&   $\overline{a_{3}}$&   $0$&   $0$&   $1$&   $-1$&   $1$&   $-1$\\
$\chi_{14}$&$\circ$&   $8$&   $-8$&   $0$&   $0$&   $0$&   $0$&   $-1$&   $1$&   $\overline{a_{3}}$&   $a_{3}$&   $0$&   $0$&   $1$&   $-1$&   $1$&   $-1$\\
$\chi_{15}$&$\circ$&   $24$&   $-24$&   $0$&   $0$&   $0$&   $0$&   $0$&   $0$&   $0$&   $0$&   $0$&   $0$&   $\overline{b_{7}}$&   $-\overline{b_{7}}$&   $b_{7}$&   $-b_{7}$\\
$\chi_{16}$&$\circ$&   $24$&   $-24$&   $0$&   $0$&   $0$&   $0$&   $0$&   $0$&   $0$&   $0$&   $0$&   $0$&   $b_{7}$&   $-b_{7}$&   $\overline{b_{7}}$&   $-\overline{b_{7}}$\\\bottomrule
\end{tabular}
\end{small}
\end{center}

\begin{center}
\caption{Character table of ${G}^{X}\simeq \GL_2(5)/2$, $X=A_4^6$}\label{tab:chars:irr:5}

\smallskip
\begin{small}
\begin{tabular}{c|c|rrrrrrrrrrrrrrrrrrrrrrrrrrr}\toprule
$[g]$&FS&   1A&   2A&   2B&   2C&   3A&   6A&   5A&   10A&   4A&   4B&   4C&   4D&   12A&   12B\\
	\midrule
$[g^2]$&&	1A&	1A&	1A&	1A&	3A&	3A&	5A&	5A&		2A&	2A&	2C&	2C&	6A&	6A\\
$[g^3]$&&	1A&	2A&	2B&	2C&	1A&	2A&	5A&	10A&	4B&	4A&	4D&	4C&	4B&	4A\\
$[g^5]$&&	1A&	2A&	2B&	2C&	3A&	6A&	1A&	2A&		4A&	4B&	4C&	4D&	12A&	12B\\
	\midrule
${\chi}_{1}$&$+$&   $1$&   $1$&   $1$&   $1$&   $1$&   $1$&   $1$&   $1$&   $1$&   $1$&   $1$&   $1$&   $1$&   $1$\\
${\chi}_{2}$&$+$&   $1$&   $1$&   $1$&   $1$&   $1$&   $1$&   $1$&   $1$&   $
-1$&   $-1$&   $-1$&   $-1$&   $-1$&   $-1$\\
${\chi}_{3}$&$+$&   $4$&   $4$&   $0$&   $0$&   $1$&   $1$&   $-1$&   $-1$&   $
2$&   $2$&   $0$&   $0$&   $-1$&   $-1$\\
${\chi}_{4}$&$+$&   $4$&   $4$&   $0$&   $0$&   $1$&   $1$&   $-1$&   $-1$&   $
-2$&   $-2$&   $0$&   $0$&   $1$&   $1$\\
${\chi}_{5}$&$+$&   $5$&   $5$&   $1$&   $1$&   $-1$&   $-1$&   $0$&   $0$&   $
1$&   $1$&   $-1$&   $-1$&   $1$&   $1$\\
${\chi}_{6}$&$+$&   $5$&   $5$&   $1$&   $1$&   $-1$&   $-1$&   $0$&   $0$&   $-1$&   $-1$&   $1$&   $1$&   $-1$&   $-1$\\
${\chi}_{7}$&$+$&   $6$&   $6$&   $-2$&   $-2$&   $0$&   $0$&   $1$&   $1$&   $
0$&   $0$&   $0$&   $0$&   $0$&   $0$\\
${\chi}_{8}$&$\circ$&   $1$&   $-1$&   $1$&   $-1$&   $1$&   $-1$&   $1$&   $-1$&   $
a_1$&   $-a_1$&   $a_1$&   $-a_1$&   $a_1$&   $-a_1$\\
${\chi}_{9}$&$\circ$&   $1$&   $-1$&   $1$&   $-1$&   $1$&   $-1$&   $1$&   $-1$&   $
-a_1$&   $a_1$&   $-a_1$&   $a_1$&   $-a_1$&   $a_1$\\
${\chi}_{10}$&$\circ$&   $4$&   $-4$&   $0$&   $0$&   $1$&   $-1$&   $-1$&   $1$&   $
2a_1$&   $-2a_1$&   $0$&   $0$&   $-a_1$&   $a_1$\\
${\chi}_{11}$&$\circ$&   $4$&   $-4$&   $0$&   $0$&   $1$&   $-1$&   $-1$&   $1$&   $
-2a_1$&   $2a_1$&   $0$&   $0$&   $a_1$&   $-a_1$\\
${\chi}_{12}$&$\circ$&   $5$&   $-5$&   $1$&   $-1$&   $-1$&   $1$&   $0$&   $0$&   $
a_1$&   $-a_1$&   $-a_1$&   $a_1$&   $a_1$&   $-a_1$\\
${\chi}_{13}$&$\circ$&   $5$&   $-5$&   $1$&   $-1$&   $-1$&   $1$&   $0$&   $0$&   $
-a_1$&   $a_1$&   $a_1$&   $-a_1$&   $-a_1$&   $a_1$\\
${\chi}_{14}$&$+$&   $6$&   $-6$&   $-2$&   $2$&   $0$&   $0$&   $1$&   $-1$&   $
0$&   $0$&   $0$&   $0$&   $0$&   $0$\\\bottomrule
\end{tabular}
\end{small}
\end{center}
\end{table}

\begin{table}
\begin{center}
\caption{Character table of ${G}^{X}\simeq 
	\GL_2(3)$, $X\in\{A_5^4D_4, E_6^4\}$}\label{tab:chars:irr:6}
\smallskip
\begin{tabular}{c|c|rrrrrrrr}\toprule
$[g]$	&FS&   1A&   2A&   2B&   4A&   3A&   6A&   8A&   8B\\
	\midrule
$[g^2]$ 	&&1A&	1A&	1A&	2A&	3A&	3A&	4A&	4A\\
$[g^3]$	&&1A&	2A&	2B&	4A&	1A&	2A&	8A&	8B\\
	\midrule
${\chi}_{1}$&$+$&   $1$&   $1$&   $1$&   $1$&   $1$&   $1$&   $1$&   $1$\\
${\chi}_{2}$&$+$&   $1$&   $1$&   $-1$&   $1$&   $1$&   $1$&   $-1$&   $-1$\\
${\chi}_{3}$&$+$&   $2$&   $2$&   $0$&   $2$&   $-1$&   $-1$&   $0$&   $0$\\
${\chi}_{4}$&$+$&   $3$&   $3$&   $-1$&   $-1$&   $0$&   $0$&   $1$&   $1$\\
${\chi}_{5}$&$+$&   $3$&   $3$&   $1$&   $-1$&   $0$&   $0$&   $-1$&   $-1$\\
${\chi}_{6}$&$\circ$&  $2$&   $-2$&   $0$&   $0$&   $-1$&   $1$&   $a_2$&   $\overline{a_2}$\\
${\chi}_{7}$&$\circ$&   $2$&   $-2$&   $0$&   $0$&   $-1$&   $1$&   $\overline{a_2}$&   $a_2$\\
${\chi}_{8}$&$+$&   $4$&   $-4$&   $0$&   $0$&   $1$&   $-1$&   $0$&   $0$\\
	\bottomrule
\end{tabular}
\end{center}
\end{table}

\begin{table}
\begin{center}
\caption{Character table of ${G}^{X}\simeq 3.\Sym_6$, $X=D_4^6$}\label{tab:chars:irr:6+3}
\smallskip
\begin{small}
\begin{tabular}{c|c|rrrrrrrrrrrrrrrrrrrrrrrrrrr}\toprule
$[g]$&FS&   1A&   3A&   2A&   6A&   3B&   3C&   4A&   12A&   5A&   15A&   15B&   2B&   2C&   4B&	6B&	6C\\
	\midrule
$[g^2]$&&	1A&	3A&	1A&3A&	3B&3C&2A&6A&5A&15A&15B&1A&1A&2A&3B&3C\\
$[g^3]$&&	1A&	1A&	2A&2A&	1A&1A&4A&4A&5A&5A&5A&2B&2C&4B&2B&2C\\
$[g^5]$&&	1A&	3A&	2A&	6A&3B&3C&4A&12A&1A&3A&3A&2B&2C&4B&6B&6C\\
	\midrule
${\chi}_{1}$&$+$&   $1$&   $1$&   $1$&   $1$&   $1$&   $1$&   $1$&   $1$&   $1$&   $1$&   $1$&   $1$&   $1$&   $1$&   $1$&   $1$\\
${\chi}_{2}$&$+$&   $1$&   $1$&   $1$&   $1$&   $1$&   $1$&   $1$&   $1$&   $1$&   $1$&   $1$&   $-1$&   $-1$&   $-1$&   $-1$&   $-1$\\
${\chi}_{3}$&$+$&   $5$&   $5$&   $1$&   $1$&   $2$&   $-1$&   $-1$&   $-1$&   $0$&   $0$&   $0$&   $3$&   $-1$&   $1$&   $0$&   $-1$\\
${\chi}_{4}$&$+$&   $5$&   $5$&   $1$&   $1$&   $2$&   $-1$&   $-1$&   $-1$&   $0$&   $0$&   $0$&   $-3$&   $1$&   $-1$&   $0$&   $1$\\
${\chi}_{5}$&$+$&   $5$&   $5$&   $1$&   $1$&   $-1$&   $2$&   $-1$&   $-1$&   $0$&   $0$&   $0$&   $-1$&   $3$&   $1$&   $-1$&   $0$\\
${\chi}_{6}$&$+$&   $5$&   $5$&   $1$&   $1$&   $-1$&   $2$&   $-1$&   $-1$&   $0$&   $0$&   $0$&   $1$&   $-3$&   $-1$&   $1$&   $0$\\
${\chi}_{7}$&$+$&   $16$&   $16$&   $0$&   $0$&   $-2$&   $-2$&   $0$&   $0$&   $1$&   $1$&   $1$&   $0$&   $0$&   $0$&   $0$&   $0$\\
${\chi}_{8}$&$+$&   $9$&   $9$&   $1$&   $1$&   $0$&   $0$&   $1$&   $1$&   $-1$&   $-1$&   $-1$&   $3$&   $3$&   $-1$&   $0$&   $0$\\
${\chi}_{9}$&$+$&   $9$&   $9$&   $1$&   $1$&   $0$&   $0$&   $1$&   $1$&   $-1$&   $-1$&   $-1$&   $-3$&   $-3$&   $1$&   $0$&   $0$\\
${\chi}_{10}$&$+$&   $10$&   $10$&   $-2$&   $-2$&   $1$&   $1$&   $0$&   $0$&   $0$&   $0$&   $0$&   $2$&   $-2$&   $0$&   $-1$&   $1$\\
${\chi}_{11}$&$+$&   $10$&   $10$&   $-2$&   $-2$&   $1$&   $1$&   $0$&   $0$&   $0$&   $0$&   $0$&   $-2$&   $2$&   $0$&   $1$&   $-1$\\
${\chi}_{12}$&$\circ$&   $6$&   $-3$&   $-2$&   $1$&   $0$&   $0$&   $2$&   $-1$&   $1$&   $b_{15}$&   $\overline{b_{15}}$&   $0$&   $0$&   $0$&   $0$&   $0$\\
${\chi}_{13}$&$\circ$&   $6$&   $-3$&   $-2$&   $1$&   $0$&   $0$&   $2$&   $-1$&   $1$&   $\overline{b_{15}}$&   $b_{15}$&   $0$&   $0$&   $0$&   $0$&   $0$\\
${\chi}_{14}$&$+$&   $12$&   $-6$&   $4$&   $-2$&   $0$&   $0$&   $0$&   $0$&   $2$&   $-1$&   $-1$&   $0$&   $0$&   $0$&   $0$&   $0$\\
${\chi}_{15}$&$+$&   $18$&   $-9$&   $2$&   $-1$&   $0$&   $0$&   $2$&   $-1$&   $-2$&   $1$&   $1$&   $0$&   $0$&   $0$&   $0$&   $0$\\
${\chi}_{16}$&$+$&   $30$&   $-15$&   $-2$&   $1$&   $0$&   $0$&   $-2$&   $1$&   $0$&   $0$&   $0$&   $0$&   $0$&   $0$&   $0$&   $0$\\\bottomrule
\end{tabular}
\end{small}
\end{center}
\end{table}

\begin{table}
\begin{center}
\caption{Character table of ${G}^{X}\simeq\SL_2(3)$, $X=A_6^4$}\label{tab:chars:irr:7}
\smallskip
\begin{tabular}{c|c|rrrrrrr}\toprule
$[g]$	&FS&   1A&   2A&   4A&   3A&   6A&   3B&   6B\\
	\midrule
$[g^2]$ 	&&1A	&	1A&		2A&		3B&		3A&		3A&		3B\\
$[g^3]$	&&1A	&	2A&		4A&		1A&		2A&		1A&		2A\\
	\midrule
$\chi_1$&$+$&   $1$&   $1$&   $1$&   $1$&   $1$&   $1$&   $1$\\
$\chi_2$&$\circ$&   $1$&   $1$&   $1$&   ${b_3}$&   $\overline{b_3}$&   $\overline{b_3}$&   ${b_3}$\\
$\chi_3$&$\circ$&   $1$&   $1$&   $1$&   $\overline{b_3}$&   ${b_3}$&   ${b_3}$&   $\overline{b_3}$\\
$\chi_4$&$+$&   $3$&   $3$&   $-1$&   $0$&   $0$&   $0$&   $0$\\
$\chi_5$&$-$&   $2$&   $-2$&   $0$&   $-1$&   $1$&   $-1$&   $1$\\
$\chi_6$&$\circ$&   $2$&   $-2$&   $0$&   $-\overline{b_3}$&   ${b_3}$&   $-{b_3}$&   $\overline{b_3}$\\
$\chi_7$&$\circ$&   $2$&   $-2$&   $0$&   $-{b_3}$&   $\overline{b_3}$&   $-\overline{b_3}$&   ${b_3}$\\\bottomrule
\end{tabular}
\end{center}
\end{table}

\begin{table}
\begin{center}
\caption{Character table of ${G}^{X}\simeq \Dih_4$, $X=A_7^2D_5^2$}\label{tab:chars:irr:8}
\smallskip
\begin{tabular}{c|c|rrrrr}\toprule
$[g]$&FS&   1A&   2A&   2B&   2C&   4A\\
	\midrule
$[g^2]$&&	1A&	1A&	1A&	1A&	2A\\
	\midrule
$\chi_1$&$+$&   $1$&   $1$&   $1$&   $1$&   $1$\\
$\chi_2$&$+$&   $1$&   $1$&   $-1$&   $-1$&   $1$\\
$\chi_3$&$+$&   $1$&   $1$&   $-1$&   $1$&   $-1$\\
$\chi_4$&$+$&   $1$&   $1$&   $1$&   $-1$&   $-1$\\
$\chi_5$&$+$&   $2$&   $-2$&   $0$&   $0$&   $0$\\\bottomrule
\end{tabular}
\end{center}
\end{table}

\begin{table}
\begin{center}
\caption{Character table of ${G}^{X}\simeq \Dih_6$, $X=A_8^3$}\label{tab:chars:irr:9}
\smallskip
\begin{tabular}{c|c|rrrrrr}\toprule
$[g]$&FS&   1A&   2A&   2B&   2C&   3A&    6A\\
	\midrule
$[g^2]$&&	1A&	1A&	1A&	1A&	3A&	3A\\
$[g^3]$&&	1A&	2A&	2B&	2C&	1A&	2A\\
	\midrule
$\chi_1$&$+$&   $1$&   $1$&   $1$&   $1$&   $1$&   $1$\\
$\chi_2$&$+$&   $1$&   $1$&   $-1$&   $-1$&   $1$&   $1$\\
$\chi_3$&$+$&   $2$&   $2$&   $0$&   $0$&   $-1$&   $-1$\\
$\chi_4$&$+$&   $1$&   $-1$&   $-1$&   $1$&   $1$&   $-1$\\
$\chi_5$&$+$&   $1$&   $-1$&   $1$&   $-1$&   $1$&   $-1$\\
$\chi_6$&$+$&   $2$&   $-2$&   $0$&   $0$&   $-1$&   $1$\\\bottomrule
\end{tabular}
\end{center}
\end{table}

\begin{table}
\begin{center}
\caption{Character table of ${G}^{X}\simeq 4$, for $X\in\{A_9^2D_{6},A_{12}^2\}$}\label{tab:chars:irr:13}
\smallskip
\begin{tabular}{c|c|rrrr}\toprule
$[g]$&FS&   1A&   2A&   4A&   4B\\
	\midrule
$[g^2]$&&	1A&	1A&	2A&	2A\\
	\midrule
$\chi_1$&$+$&   $1$&   $1$&   $1$&   $1$\\
$\chi_2$&$+$&   $1$&   $1$&   $-1$&   $-1$\\
$\chi_3$&$\circ$&   $1$&   $-1$&   $a_1$&   $\overline{a_1}$\\
$\chi_4$&$\circ$&   $1$&   $-1$&   $\overline{a_1}$&   $a_1$\\\bottomrule
\end{tabular}
\end{center}
\end{table}

\begin{table}
\begin{center}
\caption{Character table of ${G}^{X}\simeq 	\PGL_2(3)\simeq \Sym_4$, $X=D_{6}^4$}\label{tab:chars:irr:10+5}
\smallskip
\begin{tabular}{c|c|rrrrrrrr}\toprule
$[g]$	&FS&   1A& 2A&   3A&  2B&    4A\\
	\midrule
$[g^2]$ 	&&1A&	1A&	3A&	1A&2A\\
$[g^3]$	&&1A&	2A&	1A&	2B&	4A\\
	\midrule
${\chi}_{1}$&$+$&   $1$&   $1$&   $1$&  $1$&   $1$\\
${\chi}_{2}$&$+$&   $1$&   $1$&   $1$&   $-1$&   $-1$\\
${\chi}_{3}$&$+$&   $2$&   $2$&   $-1$&   $0$&   $0$\\
${\chi}_{4}$&$+$&   $3$&   $-1$&   $0$&   $1$&   $-1$\\
${\chi}_{5}$&$+$&   $3$&   $-1$&   $0$&   $-1$&   $1$\\
	\bottomrule
\end{tabular}
\end{center}
\end{table}

\begin{table}
\centering
\caption{Character table of ${G}^{X}\simeq 2$, for $X\in\{A_{11}D_7E_6, A_{15}D_9, A_{17}E_7, A_{24}, D_{10}E_7^2, D_{12}^2\}$}\label{tab:chars:irr:25}
\smallskip
\begin{tabular}{c|c|rr}\toprule
$[g]$&FS&   1A&   2A\\
	\midrule
$[g^2]$&&	1A&	1A\\
	\midrule
$\chi_1$&$+$&   $1$&   $1$\\
$\chi_2$&$+$&   $1$&   $-1$\\\bottomrule
\end{tabular}
\end{table}

\begin{table}
\begin{center}
\caption{Character table of ${G}^{X}\simeq \Sym_3$, $X\in\{D_8^3, E_8^3\}$}\label{tab:chars:irr:14+7}
\smallskip
\begin{tabular}{c|c|rrrrrr}\toprule
$[g]$&FS&   1A&   2A&   3A\\
	\midrule
$[g^2]$&&	1A&	1A&	3A\\
$[g^3]$&&	1A&	2A&	1A\\
	\midrule
$\chi_1$&$+$&   $1$&   $1$&   $1$\\
$\chi_2$&$+$&   $1$&   $-1$&  $1$\\
$\chi_3$&$+$&   $2$&   $0$&   $-1$\\
\bottomrule
\end{tabular}
\end{center}
\end{table}

\clearpage

\subsection{Twisted Euler Characters}\label{sec:chars:eul}

In this section we reproduce certain characters---the {\em twisted Euler characters}---which are attached to each group $G^X$, via its action on the root system $X$. (Their construction is described in detail in \S2.4 of \cite{MUM}.)

To interpret the tables, write $X_A$ for the (possibly empty) union of type A components of $X$, and interpret $X_D$ and $X_E$ similarly, so that if $m=m^X$ Then $X=A_{m-1}^d$ for some $d$, and $X=X_A\cup X_D\cup X_E$, for example. Then $g\mapsto\bar{\chi}^{X_A}_g$ denotes the character of the permutation representation attached to the action of $\bar{G}^X$ on the simple components of $X_A$. The characters $g\mapsto\bar\chi^{X_D}_g$ and $g\mapsto \bar\chi^{X_E}_g$ are defined similarly. The characters $\chi^{X_A}_g$, $\chi^{X_D}_g$, $\chi^{X_E}_g$ and $\check\chi^{X_D}_g$ incorporate outer automorphisms of simple root systems induced by the action $G^X$ on $X$. We refer to \S2.4 of \cite{MUM} for full details of the construction. For the purposes of this work, it suffices to have the explicit descriptions in the tables in this section. The twisted Euler characters presented here will be used to specify the umbral shadow functions in \S\ref{sec:mts:shd}.

The twisted Euler character tables also attach integers $n_g$ and $h_g$ to each $g\in G^X$. By definition, $n_g$ is the order of the image of $g\in G^X$ in $\bar{G}^X$ (cf. \S\ref{sec:gps:cnst}). 
The integer $h_g$ may be defined by setting $h_g:=N_g/n_g$ where $N_g$ is the product of the shortest and longest cycle lengths appearing in the cycle shape attached to $g$ by the action of $G^X$ on a (suitable) set of simple roots for $X$.

\begin{table}[ht]
\begin{center}
\caption{Twisted Euler characters at $\ll=2$, $X=A_1^{24}$}\label{tab:chars:eul:2}
\smallskip
\begin{tabular}{l@{ }|@{ }r@{ }r@{ }r@{ }r@{ }r@{ }r@{ }r@{ }r@{ }r@{ }r@{ }r@{ }r@{ }r@{ }r@{ }r}
\toprule
$[g]$	&1A	&2A	&2B	&3A	&3B	&4A	&4B	&4C	&5A	&6A	&6B	\\
	\midrule
$n_g|h_g$&$1|1$&$2|1$&${2|2}$&$3|1$&$3|3$&$4|2$&$4|1$&${4|4}$&$5|1$&$6|1$&$6|6$&\\
	\midrule
$\bar{\chi}^{X_A}_{g}$&     
	$24$&   $8$&   		$0$&   		$6$&   		$0$&   		$0$&   		$4$&  		$0$&   		$4$&   		$2$&   		$0$&  \\
	\midrule
\midrule
$[g]$	& 7AB	&8A	&10A	&11A&12A	&12B	&14AB	&15AB	&21AB	&23AB	\\
	\midrule
$n_g|h_g$&$7|1$&$8|1$&$10|2$&$11|1$&$12|2$&$12|12$&$14|1$&$15|1$&$21|3$&$23|1$\\
	\midrule
$\bar{\chi}^{X_A}_{g}$&
	$3$&   		$2$&   		$0$&   		$2$&   		$0$&   		$0$& 		$1$&   		$1$&   		$0$&   		$1$\\
\bottomrule
\end{tabular}
\end{center}
\end{table}

\begin{table}[ht]
\begin{center}
\caption{Twisted Euler characters at $\ll=3$, $X=A_2^{12}$}\label{tab:chars:eul:3}
\smallskip
\begin{tabular}{l@{ }|@{\;}r@{\,}r@{\,}r@{\,}r@{\,}r@{\,}r@{\,}r@{\,}r@{\,}r@{\,}r@{\,}r@{\,}r@{\,}r@{\,}r@{\,}r@{\,}r@{\,}r@{\,}r@{\,}r@{\,}r@{\,}r@{\,}r@{\,}r@{\,}r@{\,}r@{\,}r}\toprule
$[g]$&   	1A&   		2A&   		4A&   		2B&   		2C&   		3A&   		6A&   		3B&   		6B&   		4B& 	  		4C&   		5A&   		10A&   		12A&   		6C&   		6D&   		8AB&   	 	8CD&   		20AB&   		11AB&   		22AB\\ 
	\midrule
$n_g|h_g$&$1|1$&$1|4$&${2|8}$&$2|1$&$2|2$&$3|1$&$3|4$&${3|3}$&${3|12}$&$4|2$&$4|1$&$5|1$&$5|4$&$6|24$&$6|1$&$6|2$&$8|4$&$8|1$&${10|8}$&$11|1$&$11|4$\\
	\midrule
$\bar{\chi}^{X_A}_{g}$&   $12$&   $12$&   		$0$&   		$4$&   		$4$&   		$3$&   		$3$&  		$0$&   		$0$&   		$0$&   		$4$&   		$2$&   		$2$&   		$0$&   		$1$&   		$1$&   		$0$&   		$2$&   		$0$&   		$1$&   		$1$\\
$\chi^{X_A}_{g}$&   $12$&   $-12$&   		$0$&   		$4$&   		$-4$&   		$3$&   		$-3$&   		$0$&   		$0$&   		$0$&  		$0$&   		$2$&   		$-2$&   		$0$&   		$1$&   		$-1$&   		$0$&   		$0$&   		$0$&   		$1$&   		$-1$\\
	\bottomrule
\end{tabular}
\smallskip
\end{center}
\end{table}

\begin{table}
\begin{center}
\caption{Twisted Euler characters at $\ll=4$, $X=A_3^8$}\label{tab:chars:eul:4}
\smallskip
\begin{tabular}{l@{\, }|@{\;}r@{\, }r@{\, }r@{\, }r@{\, }r@{\, }r@{\, }r@{\, }r@{\, }r@{\, }r@{\, }r@{\, }r@{\, }r}\toprule
$[g]$&   		1A&   2A&   	2B&   	4A&			4B&			2C&   	3A&   	6A&   		6BC&   	8A&   	4C&   	7AB&   	14AB\\ 
	\midrule
$n_g|h_g$&$1|1$& $1|2$&	$2|2$&	$2|4$&			${4|{4}}$&	$2|1$& 	$3|1$& 	$3|2$&		$6|2$&	${4|{8}}$&		$4|1$&  	$7|1$&	$7|2$\\	
	\midrule
$\bar{\chi}^{X_A}_g$&   $8$&$8$&	$0$& 	$0$& 		$0$&		$4$&  	$2$& 	$2$&  		$0$& 	$0$& 	$2$& 	$1$& 	$1$\\
$\chi^{X_A}_g$&   $8$&$-8$&	$0$&	$0$& 		$0$&		$0$&  	$2$& 	$-2$& 		$0$& 	$0$& 	$0$& 	$1$& 	$-1$\\
\bottomrule
\end{tabular}
\smallskip
\end{center}

\end{table}

\begin{table}
\begin{center}
\caption{Twisted Euler characters at $\ll=5$, $X=A_4^6$}\label{tab:chars:eul:5}
\smallskip
\begin{tabular}{l@{\, }|@{\;}r@{\, }r@{\, }r@{\, }r@{\, }r@{\, }r@{\, }r@{\, }r@{\, }r@{\, }r@{\, }r@{\, }r@{\, }r}\toprule
$[g]$&   		1A&		2A&   	2B&   	2C&			3A&			6A&   	5A&   	10A&   		4AB&   	4CD&	12AB\\ 
	\midrule
$n_g|h_g$&		$1|1$&	$1|4$&	$2|2$&	$2|1$&		$3|3$&		$3|12$&	$5|1$&	$5|4$&		$2|8$&		$4|1$&	$6|24$	\\	
	\midrule
$\bar{\chi}^{X_A}_{g}$&   $6$&	$6$&	$2$& 	$2$& 		$0$&		$0$&  	$1$& 	$1$&  		$0$& 	$2$& 	$0$ 	\\
$\chi^{X_A}_{g}$&   $6$&	$-6$&	$-2$&	$2$& 		$0$&		$0$&  	$1$& 	$-1$& 		$0$&	$0$& 	$0$ 	\\
\bottomrule
\end{tabular}
\smallskip
\end{center}
\end{table}

\begin{table}
\begin{center}
\caption{Twisted Euler characters at $\ll=6$, $X=A_5^4D_4$}\label{tab:chars:eul:6}
\smallskip
\begin{tabular}{l|rrrrrrr}\toprule
$[g]$&   		1A&		2A&   	2B&   	4A&			3A&			6A&   	8AB\\ 
	\midrule
$n_g|h_g$&		$1|1$&	$1|2$&	$2|1$&	$2|2$&		$3|1$&		$3|2$&	$4|2$\\	
	\midrule
$\bar{\chi}^{X_A}_g$&
			$4$&	$4$&	$2$&	$0$&	$1$&	$1$&	$0$\\
$\chi^{X_A}_g$&
			$4$&	$-4$&	$0$&	$0$&	$1$&	$-1$&	$0$\\
			\midrule
$\bar{\chi}^{X_D}_g$&
			$1$&	$1$&	$1$&	$1$&	$1$&	$1$&	$1$\\
${\chi}^{X_D}_g$&
			$1$&	$1$&	$-1$&	$1$&	$1$&	$1$&	$-1$\\
$\check{\chi}^{X_D}_g$&
			$2$&	$2$&	$0$&	$2$&	$-1$&	$-1$&	$0$\\
	\bottomrule
\end{tabular}
\smallskip
\end{center}

\end{table}

\begin{table}
\begin{center}
\caption{Twisted Euler characters at $\ll=6+3$, $X=D_4^6$}\label{tab:chars:eul:6+3}
\begin{tabular}{l@{\;}|@{\;}r@{\;}r@{\;}r@{\;}r@{\;}r@{\;}r@{\;}r@{\;}r@{\;}r@{\;}r@{\;}r@{\;}r@{\;}r@{\;}r@{\;}r}\toprule
$[g]$&   		1A&		3A&   	2A&   	6A&			3B&			6C&   	4A&	12A&	5A&	15AB&	2B&	2C&	4B&	6B&	6C\\ 
	\midrule
$n_g|h_g$&		$1|1$&	$1|3$&	$2|1$&	$2|3$&		$3|1$&		$3|3$&	$4|2$&	$4|6$&	$5|1$&	$5|3$&	$2|1$&	$2|2$&	$4|1$&	$6|1$&	$6|6$\\	
	\midrule
$\bar{\chi}^{X_D}_g$&
			$6$&	$6$&	$2$&	$2$&	$3$&	$0$&	$0$&	$0$&	$1$&	$1$&	$4$&	$0$&	$2$&	$1$&	$0$\\
${\chi}^{X_D}_g$&
			$6$&	$6$&	$2$&	$2$&	$3$&	$0$&	$0$&	$0$&	$1$&	$1$&	$-4$&	$0$&	$-2$&	$-1$&	$0$\\
$\check{\chi}^{X_D}_g$&
			$12$&	$-6$&	$4$&	$-2$&	$0$&	$0$&	$0$&	$0$&	$2$&	$-1$&	$0$&	$0$&	$0$&	$0$&	$0$\\
	\bottomrule
\end{tabular}
\smallskip
\end{center}

\end{table}

\begin{table}
\begin{center}
\caption{Twisted Euler characters at $\ll=7$, $X=A_6^4$}\label{tab:chars:eul:7}
\begin{tabular}{l|rrrrr}\toprule
$[g]$&   1A&   2A&   4A&   3AB&   6AB\\ 
	\midrule
$n_g|h_g$&		$1|1$&	$1|4$&	$2|8$&	$3|1$&	$3|4$\\
	\midrule
$\bar{\chi}^{X_A}_{g}$&	4&	4&	0&	1&	1\\
$\chi^{X_A}_{g}$&	4&	-4&	0&	1&	-1\\
\bottomrule
\end{tabular}
\end{center}

\end{table}

\begin{table}
\begin{center}
\caption{Twisted Euler characters at $\ll=8$, $X=A_7^2D_5^2$}\label{tab:chars:eul:8}
\begin{tabular}{l|rrrrr}\toprule
	$[g]$&	1A&	2A&	2B&2C&4A\\
		\midrule
$n_g|h_g$&		$1|1$&$1|2$&${2|1}$&$2|1$&${2|4}$\\	
		\midrule
$\bar{\chi}^{X_A}_g$&	2&2&0&2&0	\\
$\chi^{X_A}_g$&	2&-2&0&0&0	\\
		\midrule
$\bar{\chi}^{X_D}_g$&	2&2&2&0&0	\\
$\chi^{X_D}_g$&	2&-2&0&0&0	\\
\bottomrule
\end{tabular}
\end{center}

\end{table}

\begin{table}
\begin{center}
\caption{Twisted Euler characters at $\ll=9$, $X=A_8^3$}\label{tab:chars:eul:9}
\begin{tabular}{l|rrrrrr}\toprule
	$[g]$&	1A&	2A&	2B&2C&3A&6A\\
		\midrule
$n_g|h_g$&		$1|1$&$1|4$&${2|1}$&$2|2$&$3|3$&$3|12$\\	
		\midrule
	$\bar{\chi}^{X_A}_{g}$	&3&3&1&1&0&0\\
	$\chi^{X_A}_{g}$		&3&-3&1&-1&0&0\\
	\bottomrule
\end{tabular}
\end{center}
\end{table}

\begin{table}
\begin{center}
\caption{Twisted Euler characters at $\ll=10$, $X=A_9^2D_6$}\label{tab:chars:eul:10}
\begin{tabular}{l|rrr}\toprule
	$[g]$&	1A&	2A&	4AB\\
		\midrule
$n_g|h_g$&		$1|1$&$1|2$&${2|2}$\\	
		\midrule
$\bar{\chi}^{X_A}_g$&2&2&0\\
$\chi^{X_A}_g$&2&-2&0\\
	\midrule
$\bar{\chi}^{X_D}_g$&$1$&$1$&$1$\\
$\chi^{X_D}_g$&$1$&$1$&$-1$\\
	\bottomrule
\end{tabular}
\end{center}

\end{table}

\begin{table}
\begin{center}
\caption{Twisted Euler characters at $\ll=10+5$, $X=D_6^4$}\label{tab:chars:eul:10+5}
\begin{tabular}{l|rrrrr}\toprule
	$[g]$&	1A&	2A&	3A&2B&4A\\
		\midrule
	$n_g|h_g$&$1|1$&$2|2$&$3|1$&$2|1$&$4|4$\\
		\midrule
$\bar{\chi}^{X_D}_g$&$4$&$0$&$1$&$2$&$0$\\
$\chi^{X_D}_g$&$4$&$0$&$1$&$-2$&$0$\\
	\bottomrule
\end{tabular}
\end{center}
\end{table}

\begin{table}
\begin{center}
\caption{Twisted Euler characters at $\ll=12$, $X=A_{11}D_7E_6$}\label{tab:chars:eul:12}
\begin{tabular}{l|rr}\toprule
	$[g]$&	1A&	2A\\
		\midrule
	$n_g|h_g$&		$1|1$&$1|2$\\	
		\midrule
$\bar{\chi}^{X_A}_g$&$1$&$1$\\
${\chi}^{X_A}_g$&$1$&$-1$\\
	\midrule
$\bar{\chi}^{X_D}_g$&$1$&$1$\\
${\chi}^{X_D}_g$&$1$&$-1$\\
	\midrule
$\bar{\chi}^{X_E}_g$&$1$&$1$\\
${\chi}^{X_E}_g$&$1$&$-1$\\
\bottomrule
\end{tabular}
\end{center}
\end{table}

\begin{table}
\begin{center}
\caption{Twisted Euler characters at $\ll=12+4$, $X=E_6^4$}\label{tab:chars:eul:12+4}
\begin{tabular}{l|rrrrrrr}\toprule
$[g]$&   		1A&		2A&   	2B&   	4A&			3A&			6A&   	8AB\\ 
	\midrule
$n_g|h_g$&	$1|1$&$1|2$&$2|1$&$2|4$&$3|1$&$3|2$&$4|8$\\
	\midrule
$\bar{\chi}^{X_E}_g$&
			$4$&	$4$&	$2$&	$0$&	$1$&	$1$&	$0$\\
$\chi^{X_E}_g$&
			$4$&	$-4$&	$0$&	$0$&	$1$&	$-1$&	$0$\\
\bottomrule
\end{tabular}
\smallskip
\end{center}
\end{table}

\begin{table}
\begin{center}
\caption{Twisted Euler characters at $\ll=13$, $X=A_{12}^2$}\label{tab:chars:eul:13}
\begin{tabular}{r|rrr}\toprule
	$[g]$&	1A&	2A&	4AB\\
		\midrule
$n_g|h_g$&		$1|1$&$1|4$&${2|8}$\\	
		\midrule
	$\bar{\chi}^{X_A}_{g}$	&2&2&0\\
	$\chi^{X_A}_{g}$	&2&-2&0\\
\bottomrule
\end{tabular}
\end{center}

\end{table}

\clearpage

\begin{table}
\begin{center}
\caption{Twisted Euler characters at $\ll=14+7$, $X=D_8^3$}\label{tab:chars:eul:14+7}
\begin{tabular}{l|rrr}\toprule
	$[g]$&	1A&	2A&	3A\\
		\midrule
	$n_g|h_g$&$1|1$&$2|1$&$3|3$\\
	\midrule
	$\bar{\chi}^{X_D}_{g}$	&3&1&0\\
	$\chi^{X_D}_{g}$	&3&1&0\\
	\bottomrule
\end{tabular}
\end{center}
\end{table}

\begin{table}
\begin{center}
\caption{Twisted Euler characters at $\ll=16$, $X=A_{15}D_9$}\label{tab:chars:eul:16}
\begin{tabular}{l|rr}\toprule
	$[g]$&	1A&	2A\\
		\midrule
	$n_g|h_g$&$1|1$&$1|2$\\
	\midrule
$\bar{\chi}^{X_A}_g$&$1$&$1$\\
${\chi}^{X_A}_g$&$1$&$-1$\\
	\midrule
$\bar{\chi}^{X_D}_g$&$1$&$1$\\
${\chi}^{X_D}_g$&$1$&$-1$\\
\bottomrule
\end{tabular}
\end{center}
\end{table}

\begin{table}
\begin{center}
\caption{Twisted Euler characters at $\ll=18$, $X=A_{17}E_7$}\label{tab:chars:eul:18}
\begin{tabular}{l|rr}\toprule
	$[g]$&	1A&	2A\\
		\midrule
	$n_g|h_g$&$1|1$&$1|2$\\
	\midrule
$\bar{\chi}^{X_A}_g$&$1$&$1$\\
${\chi}^{X_A}_g$&$1$&$-1$\\
	\midrule
$\bar{\chi}^{X_E}_g$&$1$&$1$\\
\bottomrule
\end{tabular}
\end{center}
\end{table}

\begin{table}
\begin{center}
\caption{Twisted Euler characters at $\ll=18+9$, $X=D_{10}E_7^2$}\label{tab:chars:eul:18+9}
\begin{tabular}{l|rr}\toprule
	$[g]$&	1A&	2A\\
		\midrule
	$n_g|h_g$&$1|1$&$2|1$\\
	\midrule
$\bar{\chi}^{X_D}_g$&$1$&$1$\\
${\chi}^{X_D}_g$&$1$&$-1$\\
	\midrule
$\bar{\chi}^{X_E}_g$&$2$&$0$\\
\bottomrule
\end{tabular}
\end{center}
\end{table}

\begin{table}
\begin{center}
\caption{Twisted Euler characters at $\ll=22+11$, $X=D_{12}^2$}\label{tab:chars:eul:22+11}
\begin{tabular}{l|rr}\toprule
	$[g]$&	1A&	2A\\
		\midrule
		$n_g|h_g$&$1|1$&$2|2$\\
		\midrule
	$\bar{\chi}^{X_D}_{g}$	&2&0\\
	$\chi^{X_D}_{g}$	&2&0\\
\bottomrule
\end{tabular}
\end{center}
\end{table}

\begin{table}
\begin{center}
\caption{Twisted Euler characters at $\ll=25$, $X=A_{24}$}\label{tab:chars:eul:25}
\begin{tabular}{l|rr}\toprule
	$[g]$&	1A&	2A\\
		\midrule
	$n_g|h_g$&	$1|1$&	$1|4$\\
	\midrule
$\bar{\chi}^{X_A}_g$&$1$&$1$\\
${\chi}^{X_A}_g$&$1$&$-1$\\
\bottomrule
\end{tabular}
\end{center}
\end{table}

\begin{table}
\begin{center}
\caption{Twisted Euler characters at $\ll=30+6,10,15$, $X=E_8^3$}\label{tab:chars:eul:30+6,10,15}
\begin{tabular}{l|rrr}\toprule
$[g]$&   		1A&		2A&   	3A\\ 
	\midrule
	$n_g|h_g$&	$1|1$&	$2|1$&	$3|3$\\
	\midrule
$\bar{\chi}^{X_E}_g$&
			$3$&	$1$&	$0$\\
\bottomrule
\end{tabular}
\smallskip
\end{center}
\end{table}

\clearpage

\section{The Umbral McKay-Thompson Series}\label{sec:mts}

In this section we describe the umbral McKay-Thompson series in complete detail. In particular, we present explicit formulas for all the McKay-Thompson series attached to elements of the umbral groups by umbral moonshine in \S\ref{sec:exp}. Most of these expressions appeared first in \cite{UM,MUM}, but some appear for the first time in this work. 

In order to facilitate explicit formulations we recall certain standard functions in \S\ref{sec:mts:specfn}. We then, using the twisted Euler characters of \S\ref{sec:chars:eul}, explicitly describe the shadow functions of umbral moonshine in \S\ref{sec:mts:shd}. The umbral McKay--Thompson series defined in \S\ref{sec:exp} may also be described in terms of Rademacher sums, according to the results of \cite{mumcor}. We present this description in \S\ref{sec:mts:rad}.

\subsection{Special Functions}\label{sec:mts:specfn}
Throughout this section we assume $q:=e^{2\pi i \tau}$, and $u:=e^{2\pi i z},$ where $\tau,z\in \CC$ with $\im ~\tau>0$.
The Dedekind eta function is $\eta(\tau):=q^{1/24}\prod_{n>0}(1-q^n)$, where . Write $\Lambda_M(\t)$ for the function 
\begin{gather*}
\Lambda_M(\t) := M q\frac{{\rm d}}{{\rm d}q} \left(\log \frac{\h(M\t)}{\h(\t)}\right)
	=\frac{M(M-1)}{24}+M\sum_{k>0}\sum_{d|k}d
	\left(q^k-Mq^{Mk}\right),
\end{gather*}
which is a modular form of weight two for $\G_0(N)$ if $M|N$. 

Define the Jacobi theta function $\theta_1(\tau,z)$ by setting
\begin{gather}
	\theta_1(\tau,z)
	:=i q^{1/8}u^{-1/2}\sum_{n\in \ZZ}(-1)^nu^nq^{n(n-1)/2}.
\end{gather}
According to the Jacobi triple product identity we have
\begin{gather}
	\theta_1(\tau,z)
	=-i q^{1/8}
	u^{1/2}\prod_{n>0}(1-u^{-1}q^{n-1})(1-uq^n)(1-q^n).
\end{gather}
The other Jacobi theta functions are
\begin{gather}
\begin{split}
	\th_2(\t,z)
	&:=  q^{1/8} u^{1/2} \prod_{n>0} (1+u^{-1} q^{n-1})(1+u q^n) (1-q^n) ,\\
	\th_3(\t,z)
	&:=  \prod_{n>0}  (1+u^{-1} q^{n-1/2})(1+u q^{n-1/2})(1-q^n) ,\\
\th_4(\t,z) 
	&:=  \prod_{n>0} (1-u^{-1} q^{n-1/2})(1-uq^{n-1/2}) (1-q^n) .
	\end{split}
\end{gather}

Define $\Psi_{1,1}$ and $\Psi_{1,-1/2}$ by setting
\begin{gather}
	\begin{split}
	\Psi_{1,1}(\tau,z)&:=-i\frac{\theta_1(\t,2z)\eta(\t)^3}{\theta_1(\tau,z)^2},\\
	\Psi_{1,-1/2}(\tau,z)&:=-i\frac{\eta(\tau)^3}{\theta_1(\tau,z)}.
	\end{split}
\end{gather}
These are meromorphic Jacobi forms of weight one, with indexes $1$ and $-1/2$, respectively. Here, the term meromorphic refers to the presence of simple poles in the functions $z\mapsto \Psi_{1,*}(\tau,z)$, 
for fixed $\tau\in\HH$, at lattice points $z\in \ZZ\tau+\ZZ$. (Cf. \S8 of \cite{DMZ}.)

From \S5 of \cite{eichler_zagier} we recall the {\em index $m$ theta functions}, for $m\in\ZZ$, defined by  setting
\begin{gather}\label{eqn:specfun-thmr}
	\theta_{m,r}(\tau,z):=\sum_{k\in\ZZ}u^{2mk+r}q^{(2mk+r)^2/4m},
\end{gather}
where $r\in \ZZ$. Evidently, $\theta_{m,r}$ only depends on $r\mod 2m$.
We set $S_{m,r}(\tau):=\frac{1}{2\pi i}\left.\partial_z \theta_{m,r}(\tau,z)\right|_{z=0}$, so that
\begin{gather}\label{eqn:specfun-Smr}
	S_{m,r}(\tau)=\sum_{k\in\ZZ}(2mk+r)q^{(2mk+r)^2/4m}.
\end{gather}

For a $m$ a positive integer 
define 
\begin{gather}\label{eqn:mum0}
	\mu_{m,0}(\tau,z)=\sum_{k\in\ZZ} u^{2km}q^{mk^2}\frac{uq^k+1}{uq^k-1}=\frac{u+1}{u-1}+O(q),
\end{gather}
and observe that we recover $\Psi_{1,1}$ upon specializing (\ref{eqn:mum0}) to $m=1$.
Observe also that
\begin{gather}
	\mu_{m,0}(\tau,z+1/2)=\sum_{k\in\ZZ} u^{2km}q^{mk^2}\frac{uq^k-1}{uq^k+1}=\frac{u-1}{u+1}+O(q).
\end{gather}
Define the even and odd parts of $\mu_{m,0}$ by setting
\begin{gather}
	\mu_{m,0}^k(\tau,z):=\frac{1}{2}(\mu_{m,0}(\tau,z)+(-1)^k\mu_{m,0}(\tau,z+1/2))
\end{gather}
for $k\mod 2$.

For $m,r\in \ZZ+\frac12$ with $m>0$ define {\em half-integral index theta functions}
\begin{gather}
	\theta_{m,r}(\tau,z):=\sum_{k\in\ZZ}e(mk+r/2)u^{2mk+r}q^{(2mk+r)^2/4m},
\end{gather}
and define also $S_{m,r}(\tau):=\frac{1}{2\pi i}\partial_z\theta_{m,r}(\tau,z)|_{z=0}$, so that
\begin{gather}
	S_{m,r}(\tau)=\sum_{k\in\ZZ}e(mk+r/2)(2mk+r)q^{(2mk+r)^2/4m}.	
\end{gather}
As in the integral index case, $\theta_{m,r}$ depends only on $r\mod 2m$. We recover $-\theta_1$ upon specializing $\theta_{m,r}$ to $m=r=1/2$.

For $m\in \ZZ+1/2$, $m>0$, define
\begin{gather}
	\mu_{m,0}(\tau,z):=i\sum_{k\in \ZZ}(-1)^ku^{2mk+1/2}q^{mk^2+k/2}\frac{1}{1-uq^k}=\frac{-iu^{1/2}}{y-1}+O(q).
\end{gather}

Given $\alpha\in \QQ$ 
write $[\alpha]$ for the operator on $q$-series (in rational, possibility negative powers of $q$) that eliminates exponents not contained in $\ZZ+\alpha$, so that if $f=\sum_{\beta\in \QQ}c(\beta)q^\beta$ then 
\begin{gather}\label{eqn:exp:specfun-proj}
	[\alpha]f:=\sum_{n\in\ZZ}c(n+\alpha)q^{n+\alpha}
\end{gather}

\subsection{Shadows}\label{sec:mts:shd}

Let $X$ be a Niemeier root system and let $m=m^X$ be the Coxeter number of $X$. For $g\in G^X$ we define the associated {\em shadow function} $S^X_g=(S^X_{g,r})$ by setting
\begin{gather}
	S^X_g:=S^{X_A}_g+S^{X_D}_g+S^{X_E}_g
\end{gather}
where the $S^{X_A}_g$, \&c., are defined in the following way, in terms of the twisted Euler characters $\chi^{X_A}_g$, \&c. given in \S\ref{sec:chars:eul}, and the unary theta series $S_{m,r}$ (cf. (\ref{eqn:specfun-Smr})). 

Note that if $m=m^X$ then $S^X_{g,r}=S^X_{g,r+2m}=-S^X_{g,-r}$ for all $g\in G^X$, so we need specify the $S^{X_A}_{g,r}$, \&c., only for $0< r< m$.

If $X_A=\emptyset$ then $S^{X_A}_g:=0$. Otherwise, we define $S^{X_A}_{g,r}$ for $0<r<m$ 
by setting
\begin{gather}
	S^{X_A}_{g,r}:=
	\begin{cases}
		\chi^{X_A}_gS_{m,r}&\text{if $r=0\mod 2$,}\\
		\bar\chi^{X_A}_gS_{m,r}&\text{if $r=1\mod 2$.}
	\end{cases}
\end{gather}

If $X_D=\emptyset$ then $S^{X_D}_g:=0$. If $X_D\neq\emptyset$ then $m$ is even and $m\geq 6$. If $m=6$ then set
\begin{gather}
	S^{X_D}_{g,r}:=
	\begin{cases}
		0&\text{if $r=0\mod 2$,}\\
		\bar\chi^{X_D}_gS_{6,r}+\chi^{X_D}_gS_{6,6-r}&\text{if $r=1,5\mod 6$,}\\
		\check\chi^{X_D}_gS_{6,r}&\text{if $r=3\mod 6$.}
	\end{cases}
\end{gather}
If $m>6$ and $m=2\mod 4$ then set
\begin{gather}
	S^{X_D}_{g,r}:=
	\begin{cases}
		0&\text{if $r=0\mod 2$,}\\
		\bar\chi^{X_D}_gS_{m,r}+\chi^{X_D}_gS_{m,m-r}&\text{if $r=1\mod 2$.}
	\end{cases}
\end{gather}
If $m>6$ and $m=0\mod 4$ then set
\begin{gather}
	S^{X_D}_{g,r}:=
	\begin{cases}
		\chi^{X_D}_gS_{m,m-r}&\text{if $r=0\mod 2$,}\\
		\bar\chi^{X_D}_gS_{m,r}&\text{if $r=1\mod 2$.}
	\end{cases}
\end{gather}

If $X_E=\emptyset$ then $S^{X_E}_g:=0$. Otherwise, $m$ is $12$ or $18$ or $30$. In case $m=12$ define $S^{X_E}_{g,r}$ for $0<r<12$ by setting
\begin{gather}
	S^{X_E}_{g,r}=
	\begin{cases}
		\bar\chi^{X_E}_g(S_{12,1}+S_{12,7})&\text{if $r\in\{1,7\}$,}\\
		\bar\chi^{X_E}_g(S_{12,5}+S_{12,11})&\text{if $r\in\{ 5, 11\}$,}\\
		\chi^{X_E}_g(S_{12,4}+S_{12,8})&\text{if $r\in\{ 4, 8\}$,}\\
		0&\text{else.}
	\end{cases}
\end{gather}
In case $m=18$ define $S^{X_E}_{g,r}$ for $0<r<18$ by setting
\begin{gather}
	S^{X_E}_{g,r}=
	\begin{cases}
		\bar\chi^{X_E}_g(S_{18,r}+S_{18,18-r})&\text{if $r\in\{ 1,5,7,11,13,17\}$,}\\
		\bar\chi^{X_E}_gS_{18,9}&\text{if $r\in\{3,15\}$,}\\
		\bar\chi^{X_E}_g(S_{18,3}+S_{18,9}+S_{18,15})&\text{if $r=9$,}\\
		0&\text{else.}
	\end{cases}
\end{gather}
In case $m=30$ define $S^{X_E}_{g,r}$ for $0<r<30$ by setting
\begin{gather}
	S^{X_E}_{g,r}=
	\begin{cases}
		\bar\chi^{X_E}_g(S_{30,1}+S_{30,11}+S_{30,19}+S_{30,29})&\text{if $r\in\{1,11,19,29\}$,}\\
		\bar\chi^{X_E}_g(S_{30,7}+S_{30,13}+S_{30,17}+S_{30,23})&\text{if $r\in\{7,13,17,23\}$,}\\
		0&\text{else.}\\
	\end{cases}
\end{gather}

\subsection{Explicit Prescriptions}\label{sec:exp}

Here we give explicit expressions for all the umbral McKay-Thompson series $H^X_g$. Most of these appeared first in \cite{UM,MUM}. The expressions in \S\S\ref{sec:exp:8A3}, \ref{sec:exp:6A4}, \ref{sec:exp:4A6}, \ref{sec:exp:2A12} are taken from \cite{umvan4}. The expressions in \S\S\ref{sec:exp:4D6}, \ref{sec:exp:3D8}, \ref{sec:exp:2D12}, \ref{sec:exp:D24} are taken from \cite{umvan2}. The expressions for $H^X_g$ with $X=E_8^3$ appeared first in \cite{mod3e8}. The expression for $H^{(6+3)}_{2B,1}$ in \S\ref{sec:exp:6D4}, and the expressions for $H^{(12+4)}_{4A,r}$ and $H^{(12+4)}_{8AB,r}$ in \S\ref{sec:exp:4E6}, appear here for the first time.

The labels for conjugacy classes in $G^X$ are as in \S\ref{sec:gps:chars}. 

\subsubsection{$\ell=2$, $X=A_1^{24}$}\label{sec:exp:24A1}

We have $G^{(2)}=G^X\simeq M_{24}$ and $m^X=2$. So for $g\in M_{24}$, the associated umbral McKay-Thompson series 
$H^{(2)}_g=(H^{(2)}_{g,r})$ is a $4$-vector-valued function, with components indexed by $r\in \ZZ/4\ZZ$, satisfying $H^{(2)}_{g,r}=-H^{(2)}_{g,-r}$, and in particular, $H^{(2)}_{g,r}=0$ for $r=0\mod 2$. So it suffices to specify the $H^{(2)}_{g,1}$ explicitly.

Define $H^{(2)}_g=(H^{(2)}_{g,r})$ for $g=e$ by requiring that 
\begin{gather}
	-2\Psi_{1,1}(\tau,z)\varphi^{(2)}_1(\tau,z)
	=-24\mu_{2,0}(\tau,z)+\sum_{r\mod 4}H^{(2)}_{e,r}(\tau)\theta_{2,r}(\tau,z),
\end{gather}
where
\begin{gather}\label{eqn:exp:vp2}
	\varphi^{(2)}_1(\tau,z):=4\left(\frac{\th_2(\tau,z)^2}{\th_2(\tau,0)^2}+\frac{\th_3(\tau,z)^2}{\th_3(\tau,0)^2}+\frac{\th_4(\tau,z)^2}{\th_4(\tau,0)^2}\right).
\end{gather}
More generally, for $g\in G^{(2)}$ define
\begin{gather}\label{h_g_explicit}
H^{(2)}_{g,1}(\t) := \frac{\bar\chi^{(2)}_g}{24} H^{(2)}_{e,1}(\t) - {F}^{(2)}_g(\t)\frac{ 1}{S_{2,1}(\tau)},
\end{gather}
where $\bar\chi^{(2)}_g$ and $F^{(2)}_g$ are as specified in Table \ref{tab:24A1-FXg}. Note that $\bar\chi^{(2)}_g=\bar\chi^{X_A}_g$, the latter appearing in Table \ref{tab:chars:eul:2}. Also, $S_{2,1}(\tau)=\eta(\tau)^3$. 

\begin{table}[ht]
\centering  
\caption{Character Values and Weight Two Forms for $\ell=2$, $X=A_1^{24}$ \label{tab:24A1-FXg}}
\begin{tabular}{rrl}
$[g]$ & $\bar\chi^{(2)}_g$ & ${F}^{(2)}_g(\tau)$\\
\noalign{\vskip 1mm}
\hline
\noalign{\vskip 1mm}
$1A$ & $24$ & $0 $\\
$2A$ & ${8}$ & $16\Lambda_2(\tau)$\\
$2B $&0& $2\eta(\tau)^8\eta(2\tau)^{-4}$\\
$3A$&6& $6\Lambda_3(\tau)$\\
$3B$&0& $2\eta(\tau)^6\eta(3\tau)^{-2}$\\
$4A$&0& $2\eta(2\tau)^8\eta(4\tau)^{-4}$\\
$4B$&4& $4(-\Lambda_2(\tau)+\Lambda_4(\tau))$\\
$4C$&0&$ 2\eta(\tau)^4\eta(2\tau)^2\eta(4\tau)^{-2}$\\
$5A$ &4&$ 2\Lambda_5(\tau)$\\
$6A$&2&$ 2(-\Lambda_2(\tau)-\Lambda_3(\tau)+\Lambda_6(\tau))$\\
$6B$&0&$ 2\eta(\tau)^2\eta(2\tau)^2\eta(3\tau)^2\eta(6\tau)^{-2}$\\
$7AB$&3&$ \Lambda_7(\tau)$\\
$8A$&2&$ -\Lambda_4(\tau)+\Lambda_8(\tau)$\\
$10A$& 0&$ 2\eta(\tau)^3\eta(2\tau)\eta(5\tau)\eta(10\tau)^{-1}$\\
$11A$& 2&$ 2(\Lambda_{11}(\tau)-11\eta(\tau)^2\eta(11\tau)^2)/5$\\
$12A$& 0&$ 2\eta(\tau)^3\eta(4\tau)^2\eta(6\tau)^3\eta(2\tau)^{-1}\eta(3\tau)^{-1}\eta(12\tau)^{-2}$\\
$12B$& 0&$ 2\eta(\tau)^4\eta(4\tau)\eta(6\tau)\eta(2\tau)^{-1}\eta(12\tau)^{-1}$\\
$14AB$& 1&$ (-\Lambda_2(\tau)-\Lambda_7(\tau)+\Lambda_{14}(\tau)-{14}\eta(\tau)\eta(2\tau)\eta(7\tau)\eta(14\tau))/3$\\
$15AB$& 1&$ (-\Lambda_3(\tau)-\Lambda_5(\tau)+\Lambda_{15}(\tau)-{15}\eta(\tau)\eta(3\tau)\eta(5\tau)\eta(15\tau))/4$\\
$21AB$& 0&$ (7\eta(\tau)^3\eta(7\tau)^3\eta(3\tau)^{-1}\eta(21\tau)^{-1}-\eta(\tau)^6\eta(3\tau)^{-2})/3$\\
$23AB$& 1&$ (\Lambda_{23}(\tau)-{23}f_{23,a}(\tau)-{69}f_{23,b}(\tau))/11$\\
\end{tabular}
\end{table}

The functions $f_{23,a}$ and $f_{23,b}$ in Table \ref{tab:24A1-FXg} are cusp forms of weight two for $\G_0(23)$, defined by
\begin{gather}
\begin{split}
	f_{23,a}(\t)&:= \frac{\eta(\tau)^3\eta(23\tau)^3}{\eta(2\tau)\eta(46\tau)}
	+3\h(\t)^2\h(23\t)^2
	+4\eta(\tau)\eta(2\tau)\eta(23\tau)\eta(46\tau)
	+4\eta(2\tau)^2\eta(46\tau)^2, \\
	 f_{23,b}(\t)&:= \h(\t)^2\h(23\t)^2.
\end{split}
\end{gather}
Note that the definition of $F^{(2)}_{g}$ appearing here for $g\in 23A\cup 23B$ corrects errors in \cite{MR2985326,Cheng2011}.

\subsubsection{$\ell=3$, $X=A_2^{12}$}\label{sec:exp:12A2}

We have $G^{(3)}=G^X\simeq 2.M_{12}$ and $m^X=3$. So for $g\in 2.M_{12}$, the associated umbral McKay-Thompson series 
$H^{(3)}_g=(H^{(3)}_{g,r})$ is a $6$-vector-valued function, with components indexed by $r\in \ZZ/6\ZZ$, satisfying $H^{(3)}_{g,r}=-H^{(3)}_{g,-r}$, and in particular, $H^{(3)}_{g,r}=0$ for $r=0\mod 3$. So it suffices to specify the $H^{(3)}_{g,1}$ and $H^{(3)}_{g,2}$ explicitly.

Define $H^{(3)}_g=(H^{(3)}_{g,r})$ for $g=e$ by requiring that 
\begin{gather}
	-2\Psi_{1,1}(\tau,z)\varphi^{(3)}_1(\tau,z)
	=-12\mu_{3,0}(\tau,z)+\sum_{r\mod 6}H^{(3)}_{e,r}(\tau)\theta_{3,r}(\tau,z),
\end{gather}
where
\begin{gather}\label{eqn:exp:vp3}
	\varphi^{(3)}_1(\tau,z):=2\left(\frac{\th_3(\tau,z)^2}{\th_3(\tau,0)^2}\frac{\th_4(\tau,z)^2}{\th_4(\tau,0)^2}+\frac{\th_4(\tau,z)^2}{\th_4(\tau,0)^2}\frac{\th_2(\tau,z)^2}{\th_2(\tau,0)^2}+\frac{\th_2(\tau,z)^2}{\th_2(\tau,0)^2}\frac{\th_3(\tau,z)^2}{\th_3(\tau,0)^2}\right).
\end{gather}
More generally, for $g\in G^{(3)}$ define
\begin{gather}
	H^{(3)}_{g,1}(\tau):=\frac{\bar\chi^{(3)}_g}{12}H^{(3)}_{e,1}(\t)+\frac{1}{2}\left(F^{(3)}_g+F^{(3)}_{zg}\right)\frac{1}{S_{3,1}(\t)},\\
	H^{(3)}_{g,2}(\tau):=\frac{\chi^{(3)}_g}{12}H^{(3)}_{e,1}(\t)+\frac{1}{2}\left(F^{(3)}_g-F^{(3)}_{zg}\right)\frac{1}{S_{3,2}(\t)},
\end{gather}
where $\chi^{(3)}_g$ and $F^{(3)}_g$ are as specified in Table \ref{tab:12A2-FXg}, and $z$ is the non-trivial central element of $G^{(3)}$. The action of $g\mapsto zg$ on conjugacy classes can be read off Table \ref{tab:12A2-FXg}, for the horizontal lines indicate the sets $[g]\cup [zg]$. 

Note the eta product identities, $S_{3,1}(\tau)=\eta(2\t)^5/\eta(4\t)^2$, and $S_{3,2}(\t)=2\eta(\t)^2\eta(4\t)^2/\eta(2\t)$. Note also that $\bar\chi^{(3)}_g=\bar\chi^{X_A}_g$ and $\chi^{(3)}_g=\chi^{X_A}_g$, the latter appearing in Table \ref{tab:chars:eul:3}.

\begin{table}[ht]
\centering  
\caption{Character Values and Weight Two Forms for $\ell=3$, $X=A_2^{12}$ \label{tab:12A2-FXg}}
\begin{tabular}{rrrl}
$[g]$&$\bar\chi^{(3)}_g$&$\chi^{(3)}_g$& $F^{(3)}_g(\t)$\\
\noalign{\vskip 1mm}
\hline
\noalign{\vskip 1mm}
1A&12&12&0\\  
2A&12&$-12$&0\\  \noalign{\vskip 2pt}\hline\noalign{\vskip 2pt}
4A&0&0&$-2 {\h(\t)^4\h(2\t)^2}/{\h(4\t)^2}$\\   \noalign{\vskip 2pt}\hline\noalign{\vskip 2pt}
2B&4&4&$-16\L_2(\t)$\\   
2C&4&$-4$&$16\L_2(\t)- \frac{16}{3}\L_4(\t)$\\  \noalign{\vskip 2pt}\hline\noalign{\vskip 2pt}
3A&3&3&$-6\L_3(\t)$\\ 
6A&3&$-3$&$ -9\L_2(\t)-2\L_3(\t)+3\L_4(\t)+3\L_6(\t)-\L_{12}(\t)$\\   \noalign{\vskip 2pt}\hline\noalign{\vskip 2pt}
3B&0&0&$ 8\L_3(\t) - 2 \L_9(\t)+2\,{\h^6(\t)}/{\h^2(3\t)}$\\   
6B&0&0&
$ -2{\eta(\tau)^5\eta(3\tau)}/{\eta(2\tau)\eta(6\tau)}$
\\   \noalign{\vskip 2pt}\hline\noalign{\vskip 2pt}
4B&0&0&$-2 {\h(2\t)^8}/{\h(4\t)^4} $\\   \noalign{\vskip 2pt}\hline\noalign{\vskip 2pt}
4C&4&0&$-8\L_4(\t)/3$\\   \noalign{\vskip 2pt}\hline\noalign{\vskip 2pt}
5A&2&2&$-2\L_5(\t)$\\   
10A&2&$-2$&$\sum_{d|20} c_{10A}(d) \L_d(\t)+\tfrac{20}{3}\eta(2\t)^2\eta(10\t)^2$\\   \noalign{\vskip 2pt}\hline\noalign{\vskip 2pt}
12A&0&0&$-2 {\h(\t)\h(2\t)^5\h(3\t) }/{\h(4\t)^2 \h(6\t)} 
$\\   \noalign{\vskip 2pt}\hline\noalign{\vskip 2pt}
6C&1&1&$ 2(\L_2(\t)+\L_3(\t)-\L_6(\t))$\\ 
6D&1&$-1$&$ -5\L_2(\t)-2\L_3(\t)+\tfrac{5}{3}\L_4(\t)+3\L_6(\t)-\L_{12}(\t)$\\   \noalign{\vskip 2pt}\hline\noalign{\vskip 2pt}
8AB&0&0&$ -2 {\h(2\t)^4 \h(4\t)^2}/{\h(8\t)^2}$\\   \noalign{\vskip 2pt}\hline\noalign{\vskip 2pt}
8CD&2&0&$-2\L_2(\t)+\tfrac{5}{3}\L_4(\t)-\L_8(\t)$\\   \noalign{\vskip 2pt}\hline\noalign{\vskip 2pt}
20AB&0&0&$-2 {\h(2\t)^7 \h(5\t)}/{\h(\t) \h(4\t)^2 \h(10\t)}$\\\noalign{\vskip 2pt}\hline\noalign{\vskip 2pt}   
11AB&1&1&$-\frac{2}{5}\L_{11}(\t)-\frac{33}{5}\eta(\t)^2\eta(11\t)^2$\\   
22AB&1&$-1$& $\sum_{d|44} c_{g}(d) \L_d(\t) -\tfrac{11}{5}\sum_{d|4} c'_{g}(d)\eta(d\t)^2\eta(11d\t)^2+\tfrac{22}{3}f_{44}(\t)$
\end{tabular}
\end{table}

The function $f_{44}$ is the unique new cusp form of weight $2$ for $\Gamma_0(44)$, normalized so that $f_{44}(\t)=q+O(q^3)$ as $\Im(\t)\to\infty$. The coefficients $c_g(d)$ and $c'_g(d)$ for $g\in 10A\cup 22A\cup 22B$ are given by
\begin{gather}
	c_{10A}(2)=-5,\,
	c_{10A}(4)=-\frac53,\,
	c_{10A}(5)=-\frac23,\,
	c_{10A}(10)=1,\,
	c_{10A}(20)=-\frac13,\\
	c_{22AB}(2)=-\frac{11}{5},\,
	c_{22AB}(4)=\frac{11}{5},\,
	c_{22AB}(11)=-\frac{2}{15},\,
	c_{22AB}(22)=\frac15,\,
	c_{22AB}(44)=-\frac{1}{15},\\
	c'_{22AB}(1)=1,\,
	c'_{22AB}(2)=4,\,
	c'_{22AB}(4)=8.	
\end{gather}

\subsubsection{$\ell=4$, $X=A_3^{8}$}\label{sec:exp:8A3}

We have $m^X=4$, so the umbral McKay-Thompson series $H^{(4)}_g=(H^{(4)}_{g,r})$ associated to $g\in G^{(4)}$ is an $8$-vector-valued function, with components indexed by $r\in \ZZ/8\ZZ$. 

Define $H^{(4)}_g=(H^{(4)}_{g,r})$ for $g\in G^{(4)}$, $g\notin 4C$, by requiring that
\begin{gather}
	\psi^{(4)}_g(\tau,z)=
	-\chi^{(4)}_g\mu_{4,0}^0(\t,z)
	-\bar\chi^{(4)}_g\mu_{4,0}^1(\t,z)
	+\sum_{r\mod 8}H^{(4)}_{g,r}(\t)\theta_{4,r}(\tau,z),
\end{gather}
where $\chi^{(4)}_{g}:=\chi^{X_A}_g$ and $\bar\chi^{(4)}_{g}:=\bar\chi^{X_A}_g$ (cf. Table \ref{tab:chars:eul:4}), and the $\psi^{(4)}_g$ are meromorphic Jacobi forms of weight $1$ and index $4$ given explicitly in Table \ref{tab:8A3-psiXg}.
\begin{table}[ht]
\centering
\caption{Character Values and Meromorphic Jacobi Forms for $\ell=4$, $X=A_3^{8}$ \label{tab:8A3-psiXg}}
\begin{tabular}{rrrl}
$[g]$&$\chi^{(4)}_g$&$\bar\chi^{(4)}_g$&$\psi^{(4)}_g(\tau,z)$\\
\noalign{\vskip 1mm}
\hline
\noalign{\vskip 1mm}
1A&8&8&$2i{\th_1(\tau,2z)^3}{\th_1(\tau,z)^{-4}}\eta(\tau)^3$
	\\
2A&$-8$&$8$&$2i{\th_1(\tau,2z)^3}{\th_2(\tau,z)^{-4}}\eta(\tau)^3$
	\\
	\noalign{\vskip 2pt}\hline\noalign{\vskip 2pt}
2B&0&0& $-2i\th_1(\tau,2z)^3\th_1(\tau,z)^{-2}\th_2(\tau,z)^{-2}\eta(\tau)^3$
	\\
\noalign{\vskip 2pt}\hline\noalign{\vskip 2pt}
4A&0&0&$-2i{\th_1(\tau,2z)\th_2(\tau,2z)^2}{\th_2(2\tau,2z)^{-2}}\eta(2\tau)^2\eta(\tau)^{-1}$
	\\
\noalign{\vskip 2pt}\hline\noalign{\vskip 2pt}
4B&0&0& $-2i\th_1(2\tau,2z)\th_3(2\tau,2z)^2\th_4(2\tau,2z)\eta(2\tau)^2\eta(\tau)^{-2}\eta(4\tau)^{-2}$
	\\
\noalign{\vskip 2pt}\hline\noalign{\vskip 2pt}
2C&0&4&$2i\th_1(\tau,2z)\th_2(\tau,2z)^2\th_1(\tau,z)^{-2}\th_2(\tau,z)^{-2}\eta(\tau)^3$
	\\
\noalign{\vskip 2pt}\hline\noalign{\vskip 2pt}
3A&2&2&$2i\th_1(3\tau,6z)\th_1(\tau,z)^{-1}\th_1(3\tau,3z)^{-1}\eta(\tau)^3$
	\\
6A&$-2$&2&$-2i\th_1(3\tau,6z)\th_2(\tau,z)^{-1}\th_2(3\tau,3z)^{-1}\eta(\tau)^3$
	\\
\noalign{\vskip 2pt}\hline\noalign{\vskip 2pt}
6BC&0&0& cf. (\ref{eqn:exp:8A3-psiXg})
	\\
\noalign{\vskip 2pt}\hline\noalign{\vskip 2pt}
8A&0&0& $-2i\th_1(\tau,2z)\th_2(2\tau,4z)\th_2(4\tau,4z)^{-1}\eta(\tau)\eta(4\tau)\eta(2\tau)^{-1}$
	\\
\noalign{\vskip 2pt}\hline\noalign{\vskip 2pt}
4C&0&2& $2i\th_1(\tau,2z)\th_2(2\tau,4z)\th_1(2\tau,2z)^{-2}\eta(2\tau)^7\eta(\tau)^{-3}\eta(4\tau)^{-2}$
	\\
\noalign{\vskip 2pt}\hline\noalign{\vskip 2pt}
7AB&1&1& cf. (\ref{eqn:exp:8A3-psiXg})
	\\
14AB&$-1$&1& cf. (\ref{eqn:exp:8A3-psiXg})
\end{tabular}
\end{table}

\begin{gather}\label{eqn:exp:8A3-psiXg}	
	\begin{split}
	\psi^{(4)}_{6BC}&:=
	\left(
	\th_1(\tau,z+\tfrac13)\th_1(\tau,z+\tfrac16)-\th_1(\tau,z-\tfrac13)\th_1(\tau,z-\tfrac16)
	\right)
	\frac{-i\th_1(3\tau,6z)}{\th_1(3\tau,3z)\th_2(3\tau,3z)}
	\eta(3\tau)
	\\
	\psi^{(4)}_{7AB}&:=
	\left(
	\prod_{j=1}^3\th_1(\tau,2z+\tfrac{j^2}{7})\th_1(\tau,z-\tfrac{j^2}{7})
	+\prod_{j=1}^3\th_1(\tau,2z-\tfrac{j^2}{7})\th_1(\tau,z+\tfrac{j^2}{7})
	\right)\frac{-i}{\th_1(7\tau,7z)}\frac{\eta(7\tau)}{\eta(\tau)^4}
	\\
	\psi^{(4)}_{14AB}&:=
	\left(
	\prod_{j=1}^3\th_1(\tau,2z+\tfrac{j^2}{7})\th_2(\tau,z-\tfrac{j^2}{7})
	+\prod_{j=1}^3\th_1(\tau,2z-\tfrac{j^2}{7})\th_2(\tau,z+\tfrac{j^2}{7})
	\right)\frac{i}{\th_2(7\tau,7z)}\frac{\eta(7\tau)}{\eta(\tau)^4}
	\end{split}
\end{gather}

For use later on, note that $\psi^{(4)}_{1A}=-2\Psi_{1,1}\varphi^{(4)}_1$, where
\begin{gather}\label{eqn:exp:vp4}
	\varphi^{(4)}_1(\tau,z):=\frac{\th_1(\tau,2z)^2}{\th_1(\tau,z)^2}.
\end{gather}

\subsubsection{$\ell=5$, $X=A_4^{6}$}\label{sec:exp:6A4}

We have $m^X=5$, so the umbral McKay-Thompson series $H^{(5)}_g=(H^{(5)}_{g,r})$ associated to $g\in G^{(5)}$ is a $10$-vector-valued function, with components indexed by $r\in \ZZ/10\ZZ$. 

Define $H^{(5)}_g=(H^{(5)}_{g,r})$ for $g\in G^{(5)}$, $g\notin 5A\cup 10A$, by requiring that
\begin{gather}
	\psi^{(5)}_g(\tau,z)=
	-\chi^{(5)}_g\mu_{5,0}^0(\t,z)
	-\bar\chi^{(5)}_g\mu_{5,0}^1(\t,z)
	+\sum_{r\mod 10}H^{(5)}_{g,r}(\t)\theta_{5,r}(\tau,z),
\end{gather}
where $\chi^{(5)}_{g}:=\chi^{X_A}_g$ and $\bar\chi^{(5)}_{g}:=\bar\chi^{X_A}_g$ (cf. Table \ref{tab:chars:eul:5}), and the $\psi^{(5)}_g$ are meromorphic Jacobi forms of weight $1$ and index $5$ given explicitly in Table \ref{tab:6A4-psiXg}. 

\begin{table}[ht]
\centering
\caption{Character Values and Meromorphic Jacobi Forms for $\ell=5$, $X=A_4^{6}$ \label{tab:6A4-psiXg}}
\begin{tabular}{rrrl}
$[g]$&$\chi^{(5)}_g$&$\bar\chi^{(5)}_g$&$\psi^{(5)}_g(\tau,z)$\\
\noalign{\vskip 1mm}
\hline
\noalign{\vskip 1mm}
1A&6&6&$2i{\th_1(\tau,2z)\th_1(\tau,3z)}{\th_1(\tau,z)^{-3}}\eta(\tau)^3$
	\\
2A&$-6$&6&$-2i{\th_1(\tau,2z)\th_2(\tau,3z)}{\th_2(\tau,z)^{-3}}\eta(\tau)^3$
	\\
\noalign{\vskip 2pt}\hline\noalign{\vskip 2pt}
2B&$-2$&2&$-2i{\th_1(\tau,2z)\th_1(\tau,3z)}{\th_1(\tau,z)^{-1}\th_2(\tau,z)^{-2}}\eta(\tau)^3$
	\\
2C&$2$&2&$2i{\th_1(\tau,2z)\th_2(\tau,3z)}{\th_1(\tau,z)^{-2}\th_2(\tau,z)^{-1}}\eta(\tau)^3$
	\\
\noalign{\vskip 2pt}\hline\noalign{\vskip 2pt}
3A&0&0&$-2i{\th_1(\tau,2z)\th_1(\tau,3z)}{\th_1(3\tau,3z)^{-1}}\eta(3\tau)$
	\\
6A&0&0&$-2i{\th_1(\tau,2z)\th_2(\tau,3z)}{\th_2(3\tau,3z)^{-1}}\eta(3\tau)$
	\\
\noalign{\vskip 2pt}\hline\noalign{\vskip 2pt}
4AB&0&0& cf. (\ref{eqn:exp:6A4-psiXg}) 
	\\
\noalign{\vskip 2pt}\hline\noalign{\vskip 2pt}
4CD&0&2& cf. (\ref{eqn:exp:6A4-psiXg})
	\\
\noalign{\vskip 2pt}\hline\noalign{\vskip 2pt}
12AB&0&0& cf. (\ref{eqn:exp:6A4-psiXg})
\end{tabular}
\end{table}

\begin{gather}\label{eqn:exp:6A4-psiXg}	
\begin{split}
	\psi^{(5)}_{4AB}(\tau,z)&:=
	-i\th_2(\tau,2z)\frac{\th_1(\tau,z+\frac14)\th_1(\tau,3z+\frac14)-\th_1(\tau,z-\frac14)\th_1(\tau,3z-\frac14)}{\th_2(2\tau,2z)^2}\frac{\eta(2\tau)^2}{\eta(\tau)}
	\\
	\psi^{(5)}_{4CD}(\tau,z)&:=
	-i\th_2(\tau,2z)\frac{\th_1(\tau,z+\frac14)\th_1(\tau,3z-\frac14)+\th_1(\tau,z-\frac14)\th_1(\tau,3z+\frac14)}{\th_1(2\tau,2z)\th_2(2\tau,2z)}\frac{\eta(2\tau)^2}{\eta(\tau)}
	\\
	\psi^{(5)}_{12AB}(\tau,z)&:=
	i\frac{\th_2(\tau,2z)}{\th_2(6\tau,6z)}\left({\th_1(\tau,z+\tfrac{1}{12})\th_1(\tau,z+\tfrac{1}{4})\th_1(\tau,z+\tfrac{5}{12})\th_1(\tau,3z-\tfrac14)}
	\right. 
	\\
	&\quad\qquad\qquad\qquad -
	\left.{\th_1(\tau,z-\tfrac{1}{12})\th_1(\tau,z-\tfrac{1}{4})\th_1(\tau,z-\tfrac{5}{12})\th_1(\tau,3z+\tfrac14)}
	\right)\frac{\eta(6\tau)}{\eta(\tau)^3}
\end{split}
\end{gather}

For $g\in 5A$ 
use the formulas of \S\ref{sec:exp:A24} to define 
\begin{gather}
		H^{(5)}_{5A,r}(\tau)
		:=
			H^{(25)}_{1A,r}(\tau/5)-H^{(25)}_{1A,10-r}(\t/5)+
			H^{(25)}_{1A,10+r}(\tau/5)-H^{(25)}_{1A,20-r}(\t/5)+
			H^{(25)}_{1A,20+r}(\tau/5).
\end{gather}
For $g\in 10A$ set $H^{(5)}_{10A,r}(\tau):=-(-1)^rH^{(5)}_{5A,r}(\t)$.

For use later on we note that $\psi^{(5)}_{1A}=-2\Psi_{1,1}\varphi^{(5)}_1$, where
\begin{gather}\label{eqn:exp:vp5}
	\varphi^{(5)}_1(\tau,z):=\frac{\th_1(\tau,3z)}{\th_1(\tau,z)}. 
\end{gather}

\subsubsection{$\ell=6$, $X=A_5^{4}D_4$}\label{sec:exp:4A5D4}

We have $m^X=6$, so the umbral McKay-Thompson series $H^{(6)}_g=(H^{(6)}_{g,r})$ associated to $g\in G^{(6)}$ is a $12$-vector-valued function with components indexed by $r\in \ZZ/12\ZZ$. We have $H^{(6)}_{g,r}=-H^{(6)}_{g,-r}$, so it suffices to specify the $H^{(6)}_{g,r}$ for $r\in\{1,2,3,4,5\}$. 

To define $H^{(6)}_g=(H^{(6)}_{g,r})$ for $g=e$, first define $h(\tau)=(h_r(\tau))$ by requiring that 
\begin{gather}
	-2\Psi_{1,1}(\tau,z)\varphi^{(6)}_1(\tau,z)
	=-24\mu_{6,0}(\tau,z)+\sum_{r\mod 12}h_{r}(\tau)\theta_{6,r}(\tau,z),
\end{gather}
where
\begin{gather}\label{eqn:exp:vp6}
	\varphi^{(6)}_1(\tau,z):=\varphi^{(2)}_1(\tau,z)\varphi^{(5)}_1(\tau,z)-\varphi^{(3)}_1(\tau,z)\varphi^{(4)}_1(\tau,z).
\end{gather}
(Cf. (\ref{eqn:exp:vp2}), (\ref{eqn:exp:vp3}), (\ref{eqn:exp:vp4}), (\ref{eqn:exp:vp5}).) Now define the $H^{(6)}_{1A,r}$ by setting 
\begin{gather}
	\begin{split}
		H^{(6)}_{1A,1}(\tau)&:=\frac{1}{24}\left(5h_1(\tau)+h_5(\tau)\right),\\
		H^{(6)}_{1A,2}(\tau)&:=\frac{1}{6}h_2(\tau),\\
		H^{(6)}_{1A,3}(\tau)&:=\frac{1}{4}h_3(\tau),\\
		H^{(6)}_{1A,4}(\tau)&:=\frac{1}{6}h_4(\tau),\\
		H^{(6)}_{1A,5}(\tau)&:=\frac{1}{24}\left(h_1(\tau)+5h_5(\tau)\right).		
	\end{split}
\end{gather}
Define $H^{(6)}_{2A,r}$ by requiring
\begin{gather}
	H^{(6)}_{2A,r}(\tau):=-(-1)^rH^{(6)}_{1A,r}(\tau).
\end{gather}

For the remaining $g$, recall (\ref{eqn:exp:specfun-proj}). The $H^{(6)}_{g,r}$ for $g\notin 1A\cup 2A$ are defined as follows for $r=2$ and $r=4$, noting that $H^{(3)}_{g,4}=H^{(3)}_{g,-2}=-H^{(3)}_{g,2}$. 
\begin{gather}
	\begin{split}\label{eqn:exp:4A5D4-r24}
	H^{(6)}_{2B,r}(\t)&:=[-\tfrac{r^2}{24}]H^{(3)}_{4C,r}(\tau/2)
	\\
	H^{(6)}_{4A,r}(\t)&:=[-\tfrac{r^2}{24}]H^{(3)}_{4B,r}(\tau/2)
	\\
	H^{(6)}_{3A,r}(\t)&:=[-\tfrac{r^2}{24}]H^{(3)}_{6C,r}(\tau/2)
	\\
	H^{(6)}_{6A,r}(\t)&:=[-\tfrac{r^2}{24}]H^{(3)}_{6D,r}(\tau/2)
	\\
	H^{(6)}_{8AB,r}(\t)&:=[-\tfrac{r^2}{24}]H^{(3)}_{8CD,r}(\tau/2)
	\end{split}
\end{gather}

For the $H^{(6)}_{g,3}$ we define
\begin{gather}	
	\begin{split}
		H^{(6)}_{2B,3}(\tau), H^{(6)}_{4A,3}(\tau)&:=-[-\tfrac{9}{24}]H^{(2)}_{6A,1}(\tau/3),
		\\ 
		H^{(6)}_{3A,3}(\tau), H^{(6)}_{6A,3}(\tau)&:=0,
		\\ 
		H^{(6)}_{8AB,3}(\tau)&:=-[-\tfrac{9}{24}]H^{(2)}_{12A,1}(\tau/3).
	\end{split}
\end{gather}

Noting that $H^{(2)}_{g,5}=H^{(2)}_{g,1}$ and $H^{(3)}_{g,5}=-H^{(3)}_{g,1}$, the $H^{(6)}_{g,1}$ and $H^{(6)}_{g,5}$ are defined for $o(g)\neq 0\mod 3$ by setting 
\begin{gather}
	\begin{split}
		H^{(6)}_{2B,r}(\tau)&:=[-\tfrac{1}{24}]\frac12\left(H^{(2)}_{6A,r}(\tau/3)+H^{(3)}_{4C,r}(\tau/2)\right)\\
		H^{(6)}_{4A,r}(\tau)&:=[-\tfrac{1}{24}]\frac12\left(H^{(2)}_{6A,r}(\tau/3)+H^{(3)}_{4B,r}(\tau/2)\right)\\
		H^{(6)}_{8AB,r}(\tau)&:=[-\tfrac{1}{24}]\frac12\left(H^{(2)}_{12A,r}(\tau/3)+H^{(3)}_{8CD,r}(\tau/2)\right)
	\end{split}
\end{gather}
It remains to specify the $H^{(6)}_{g,r}$
when $g\in 3A\cup 6A$ and $r$ is $1$ or $5$. These cases are determined by using the formulas of \S\ref{sec:exp:A17E7} to set
\begin{gather}
	\begin{split}
	H^{(6)}_{3A,1}(\tau), H^{(6)}_{6A,1}(\tau)&:=H^{(18)}_{1A,1}(3\tau)-H^{(18)}_{1A,11}(3\tau)+H^{(18)}_{1A,13}(3\tau),\\
	H^{(6)}_{3A,5}(\tau), H^{(6)}_{6A,5}(\tau)&:=H^{(18)}_{1A,5}(3\tau)-H^{(18)}_{1A,7}(3\tau)+H^{(18)}_{1A,17}(3\tau).
	\end{split}
\end{gather}

\subsubsection{$\ell=6+3$, $X=D_4^6$}\label{sec:exp:6D4}

We have $m^X=6$, so the umbral McKay-Thompson series $H^{(6+3)}_g=(H^{(6+3)}_{g,r})$ associated to $g\in G^{(6+3)}$ is a $12$-vector-valued function with components indexed by $r\in \ZZ/12\ZZ$. In addition to the identity $H^{(6+3)}_{g,r}=-H^{(6+3)}_{g,-r}$, we have $H^{(6+3)}_{g,r}=0$ for $r=0\mod 2$. 
Thus it suffices to specify the $H^{(6+3)}_{g,r}$ for $r\in\{1,3,5\}$.

Recall (\ref{eqn:exp:specfun-proj}). For $r=1$, define
\begin{gather}
\begin{split}
	H^{(6+3)}_{1A,1}(\tau), H^{(6+3)}_{3A,1}(\tau)&:=H^{(6)}_{1A,1}(\tau)+H^{(6)}_{1A,5}(\t),\\
	H^{(6+3)}_{2A,1}(\tau), H^{(6+3)}_{6A,1}(\tau)&:=H^{(6)}_{2B,1}(\tau)+H^{(6)}_{2B,5}(\t),\\
	H^{(6+3)}_{3B,1}(\tau)&:=H^{(6)}_{3A,1}(\tau)+H^{(6)}_{3A,5}(\t),\\
	H^{(6+3)}_{3C,1}(\tau)&:=-2 \frac{\eta(\t)^2}{\eta(3\t)},\\
	H^{(6+3)}_{4A,1}(\tau), H^{(6+3)}_{12A,1}(\tau)&:=H^{(6)}_{8AB,1}(\tau)+H^{(6)}_{8AB,5}(\t),\\
	H^{(6+3)}_{5A,1}(\tau), H^{(6+3)}_{15A,1}(\tau)&:=[-\tfrac{1}{24}]H^{(2)}_{15AB,1}(\tau/3),\\
	H^{(6+3)}_{2C,1}(\tau)&:=H^{(6)}_{4A,1}(\tau)-H^{(6)}_{4A,5}(\t),\\
	H^{(6+3)}_{4B,1}(\tau)&:=H^{(6)}_{8AB,1}(\tau)-H^{(6)}_{8AB,5}(\t),\\
	H^{(6+3)}_{6B,1}(\tau)&:=H^{(6)}_{6A,1}(\tau)-H^{(6)}_{6A,5}(\t),\\
	H^{(6+3)}_{6C,1}(\tau)&:=-2 \frac{\eta(2\t)\,\eta(3\t)}{\eta(6\t)}.\\
\end{split}
\end{gather}
Then define $H^{(6+3)}_{2B,1}$ by setting
\begin{gather}
	H^{(6+3)}_{2B,1}(\t):=2H^{(6+3)}_{4B,1}(\t)+2\frac{\eta(\tau)^3}{\eta(2\tau)^2}.
\end{gather}
For $r=3$ set
\begin{gather}
\begin{split}
	H^{(6+3)}_{1A,3}(\tau)&:=2H^{(6)}_{1A,3}(\tau),\\
	H^{(6+3)}_{3A,3}(\tau)&:=-H^{(6)}_{1A,3}(\tau),\\
	H^{(6+3)}_{2A,3}(\tau)&:=2H^{(6)}_{2B,3}(\tau),\\
	H^{(6+3)}_{6A,3}(\tau)&:=-H^{(6)}_{2B,3}(\tau),\\
	H^{(6+3)}_{4A,3}(\tau)&:=2H^{(6)}_{8AB,3}(\tau),\\
	H^{(6+3)}_{12A,3}(\tau)&:=-H^{(6)}_{8AB,3}(\tau),\\
	H^{(6+3)}_{5A,3}(\tau)&:=-2[-\tfrac{9}{24}]H^{(2)}_{15AB,1}(\tau),\\
	H^{(6+3)}_{15A,3}(\tau)&:=[-\tfrac{9}{24}]H^{(2)}_{15AB,1}(\tau),\\
\end{split}
\end{gather}
and
\begin{gather}
	H^{(6+3)}_{3B,3}(\tau),
	H^{(6+3)}_{3C,3}(\tau),
	H^{(6+3)}_{2B,3}(\tau),
	H^{(6+3)}_{2C,3}(\tau),
	H^{(6+3)}_{4B,3}(\tau),
	H^{(6+3)}_{6B,3}(\tau),
	H^{(6+3)}_{6C,3}(\tau):=0.
\end{gather}

For $r=5$ define $H^{(6+3)}_{g,5}(\tau):=H^{(6+3)}_{g,1}(\tau)$ for $[g]\in\{1A, 3A, 2A, 6A, 3B, 3C, 4A, 12A, 5A, 15AB\}$, and set $H^{(6+3)}_{g,5}(\tau):=-H^{(6+3)}_{g,1}(\tau)$ for the remaining cases, $[g]\in \{2B, 2C, 4B, 6B, 6C\}$.

\subsubsection{$\ell=7$, $X=A_6^{4}$}\label{sec:exp:4A6}

We have $m^X=7$, so the umbral McKay-Thompson series $H^{(7)}_g=(H^{(7)}_{g,r})$ associated to $g\in G^{(7)}=G^X\simeq \SL_2(3)$ is a $14$-vector-valued function, with components indexed by $r\in \ZZ/14\ZZ$. 

Define $H^{(7)}_g=(H^{(7)}_{g,r})$ for $g\in G^{(7)}$ by requiring that
\begin{gather}
	\psi^{(7)}_g(\tau,z)=
	-\chi^{(7)}_g\mu_{7,0}^0(\t,z)
	-\bar\chi^{(7)}_g\mu_{7,0}^1(\t,z)
	+\sum_{r\mod 14}H^{(7)}_{g,r}(\t)\theta_{7,r}(\tau,z),
\end{gather}
where $\chi^{(7)}_{g}:=\chi^{X_A}_g$ and $\bar\chi^{(7)}_{g}:=\bar\chi^{X_A}_g$ (cf. Table \ref{tab:chars:eul:7}), and the $\psi^{(7)}_g$ are meromorphic Jacobi forms of weight $1$ and index $7$ given explicitly in Table \ref{tab:4A6-psiXg}.

\begin{table}[ht]
\centering
\caption{Character Values and Meromorphic Jacobi Forms for $\ell=7$, $X=A_6^{4}$ \label{tab:4A6-psiXg}}
\begin{tabular}{rrrl}
$[g]$&$\chi^{(7)}_g$&$\bar\chi^{(7)}_g$&$\psi^{(7)}_g(\tau,z)$\\
\noalign{\vskip 1mm}
\hline
\noalign{\vskip 1mm}
1A&4&4&$2i{\th_1(\tau,4z)}{\th_1(\tau,z)^{-2}}\eta(\tau)^3$
	\\
2A&$-4$&4&$-2i{\th_1(\tau,4z)}{\th_2(\tau,z)^{-2}}\eta(\tau)^3$
	\\
\noalign{\vskip 2pt}\hline\noalign{\vskip 2pt}
4A&$0$&0&$-2i{\th_1(\tau,4z)}{\th_2(2\tau,2z)^{-1}}\eta(2\tau)\eta(\tau)$
	\\
\noalign{\vskip 2pt}\hline\noalign{\vskip 2pt}
3A&1&1& cf. (\ref{eqn:exp:4A6-psiXg})
	\\
6A&$-1$&1& cf. (\ref{eqn:exp:4A6-psiXg})
\end{tabular}
\end{table}

\begin{gather}\label{eqn:exp:4A6-psiXg}	
\begin{split}
	\psi^{(7)}_{3A}(\tau,z)&:=
	-i\frac{\th_1(\tau,4z+\frac13)\th_1(\tau,z-\frac13)+\th_1(\tau,4z-\frac13)\th_1(\tau,z+\frac13)}{\th_1(3\tau,3z)}\eta(3\tau)
	\\
	\psi^{(7)}_{6A}(\tau,z)&:=
	-i\frac{\th_1(\tau,4z+\frac13)\th_1(\tau,z-\frac16)-\th_1(\tau,4z-\frac13)\th_1(\tau,z+\frac16)}{\th_2(3\tau,3z)}\eta(3\tau)
\end{split}
\end{gather}

For use later on we note that $\psi^{(7)}_{1A}=-2\Psi_{1,1}\varphi^{(7)}_1$, where
\begin{gather}\label{eqn:exp:vp7}
	\varphi^{(7)}_1(\tau,z):=\frac{\th_1(\tau,4z)}{\th_1(\tau,2z)}.
\end{gather}

\subsubsection{$\ell=8$, $X=A_7^{2}D_5^2$}\label{sec:exp:2A72D5}

We have $m^X=8$, so the umbral McKay-Thompson series $H^{(8)}_g=(H^{(8)}_{g,r})$ associated to $g\in G^{(8)}$ is a $16$-vector-valued function with components indexed by $r\in \ZZ/16\ZZ$. We have $H^{(8)}_{g,r}=-H^{(8)}_{g,-r}$, so it suffices to specify the $H^{(8)}_{g,r}$ for $r\in\{1,2,3,4,5,6,7\}$. 

To define $H^{(8)}_g=(H^{(8)}_{g,r})$ for $g=e$, first define $h(\tau)=(h_r(\tau))$ by requiring that 
\begin{gather}
	-2\Psi_{1,1}(\tau,z)\left(\varphi^{(8)}_1(\tau,z)+\frac12\varphi^{(8)}_2(\tau,z)\right)
	=-24\mu_{8,0}(\tau,z)+\sum_{r\mod 16}h_{r}(\tau)\theta_{8,r}(\tau,z),
\end{gather}
where
\begin{gather}\label{eqn:exp:vp8}
	\begin{split}
	\varphi^{(8)}_1(\tau,z)&:=\varphi_1^{(3)}(\tau,z)\varphi_1^{(6)}(\tau,z)-5\varphi_1^{(4)}(\tau,z)\varphi_1^{(5)}(\tau,z),\\
	\varphi^{(8)}_2(\tau,z)&:=\varphi_1^{(4)}(\tau,z)\varphi_1^{(5)}(\tau,z)-\varphi_1^{(8)}(\tau,z).
	\end{split}
\end{gather}
(Cf. (\ref{eqn:exp:vp3}), (\ref{eqn:exp:vp4}), (\ref{eqn:exp:vp5}), (\ref{eqn:exp:vp6}).) Now define the $H^{(8)}_{1A,r}$ by setting 
\begin{gather}
		H^{(8)}_{1A,r}(\tau):=\frac{1}{6}h_r(\tau),
\end{gather}
for $r\in \{1,3,4,5,7\}$, and 
\begin{gather}
		H^{(8)}_{1A,2}(\tau),H^{(8)}_{1A,6}(\tau):=\frac{1}{12}\left(h_2(\tau)+h_6(\t)\right).
\end{gather}
Define $H^{(8)}_{2A,r}$ for $1\leq r\leq 7$ by requiring
\begin{gather}
	H^{(8)}_{2A,r}(\tau):=-(-1)^rH^{(8)}_{1A,r}(\tau).
\end{gather}

For the remaining $g$, recall (\ref{eqn:exp:specfun-proj}). The $H^{(8)}_{g,r}$ for $g\in 2B\cup 2C\cup 4A$ are defined as follows for $r\in\{1,3,5,7\}$, noting that $H^{(4)}_{g,7}=H^{(4)}_{g,-1}=-H^{(4)}_{g,1}$, \&c. 
\begin{gather}
	\begin{split}\label{eqn:exp:2A72D5-rel}
	H^{(8)}_{2BC,r}(\t)&:=[-\tfrac{r^2}{32}]H^{(4)}_{4C,r}(\tau/2)
	\\
	H^{(8)}_{4A,r}(\t)&:=[-\tfrac{r^2}{32}]H^{(4)}_{4B,r}(\tau/2)
	\end{split}
\end{gather}
The $H^{(8)}_{2BC,r}$ and $H^{(8)}_{4A,r}$ vanish for $r=0\mod 2$.

\subsubsection{$\ell=9$, $X=A_8^3$}\label{sec:exp:3A8}

We have $m^X=9$, so for $g\in G^{(9)}$ the associated umbral McKay-Thompson series 
$H^{(9)}_g=(H^{(9)}_{g,r})$ is a $18$-vector-valued function, with components indexed by $r\in \ZZ/18\ZZ$, satisfying $H^{(9)}_{g,r}=-H^{(9)}_{g,-r}$, and in particular, $H^{(9)}_{g,r}=0$ for $r=0\mod 9$. So it suffices to specify the $H^{(9)}_{g,r}$ for $r\in \{1,2,3,4,5,6,7,8\}$.

Define $H^{(9)}_g=(H^{(9)}_{g,r})$ for $g=e$ by requiring that 
\begin{gather}
	-\Psi_{1,1}(\tau,z)\varphi_1^{(9)}(\tau,z)
	=-3\mu_{9,0}(\tau,z)+\sum_{r\mod 18}H^{(9)}_{e,r}(\tau)\theta_{9,r}(\tau,z),
\end{gather}
where
\begin{gather}\label{eqn:exp:vp9}
	\varphi_1^{(9)}(\tau,z):=\varphi_1^{(3)}(\tau,z)\varphi_1^{(7)}(\tau,z)-\varphi_1^{(5)}(\tau,z)^2.
\end{gather}
(Cf. (\ref{eqn:exp:vp3}), (\ref{eqn:exp:vp5}), (\ref{eqn:exp:vp7}).)

Recall (\ref{eqn:exp:specfun-proj}). The $H^{(9)}_{2B,r}$ are defined for $r\in\{1,2,4,5,7,8\}$ by setting
\begin{gather}
	\begin{split}\label{eqn:exp:3A8-rel}
	H^{(9)}_{2B,r}(\t)&:=[-\tfrac{r^2}{36}]H^{(3)}_{6C,r}(\tau/3),
	\end{split}
\end{gather}
where we note that $H^{(3)}_{g,4}=H^{(3)}_{g,-2}=-H^{(3)}_{g,2}$, \&c. 
We determine $H^{(9)}_{2B,3}$ and $H^{(9)}_{2B,6}$ by using \S\ref{sec:exp:A17E7} to set
\begin{gather}
	H^{(9)}_{2B,r}(\tau):=H^{(18)}_{1A,r}(2\tau)-H^{(18)}_{1A,18-r}(2\tau)
\end{gather}
for $r\in \{3,6\}$.

The $H^{(9)}_{3A,r}$ are defined by the explicit formulas
\begin{gather}
\begin{split}
	H^{(9)}_{3A,1}(\tau)&:=[-\tfrac{1}{36}]f^{(9)}_1(\tau/3),
	\\
	H^{(9)}_{3A,2}(\tau)&:=[-\tfrac{4}{36}]f^{(9)}_2(\tau/3),
	\\
	H^{(9)}_{3A,3}(\tau)&:=-\theta_{3,3}(\tau,0),
	\\
	H^{(9)}_{3A,4}(\tau)&:=-[-\tfrac{16}{36}]f^{(9)}_2(\tau/3),
	\\
	H^{(9)}_{3A,5}(\tau)&:=-[-\tfrac{25}{36}]f^{(9)}_1(\tau/3), 
	\\
	H^{(9)}_{3A,6}(\tau)&:=\theta_{3,0}(\tau,0),
	\\
	H^{(9)}_{3A,7}(\tau)&:=[-\tfrac{13}{36}]f^{(9)}_1(\tau/3), 
	\\
	H^{(9)}_{3A,8}(\tau)&:=[-\tfrac{28}{36}]f^{(9)}_2(\tau/3), 
\end{split}
\end{gather}
where
\begin{gather}
\begin{split}
f^{(9)}_1(\tau)&:= -2\frac{\h(\t) \h(12\t) \h(18\t)^2 }{\h(6\t) \h(9\t) \h(36\t)},  \\
f^{(9)}_2(\tau)&:= \frac{\h(2\t)^6 \h(12\t) \h(18\t)^2 }{\h(\t) \h(4\t)^4 \h(6\t)\h(9\t)\h(36\t)}-\frac{\h(\t) \h(2\t) \h(3\t)^2 }{\h(4\t)^2 \h(9\t)}.
\end{split}
\end{gather}

Finally, the $H^{(9)}_{g,r}$ are determined for $g\in 2A\cup 2C\cup 6A$ by setting 
\begin{gather}
\begin{split}
H^{(9)}_{2A,r}(\tau)&:=(-1)^{r+1}H^{(9)}_{1A,r}(\tau),\\
H^{(9)}_{2C,r}(\tau)&:=(-1)^{r+1}H^{(9)}_{2B,r}(\tau),\\
H^{(9)}_{6A,r}(\tau)&:=(-1)^{r+1}H^{(9)}_{3A,r}(\tau).
\end{split}
\end{gather}

\subsubsection{$\ell=10$, $X=A_9^2D_6$}\label{sec:exp:2A9D6}

We have $m^X=10$, so the umbral McKay-Thompson series $H^{(10)}_g=(H^{(10)}_{g,r})$ associated to $g\in G^{(10)}$ is a $20$-vector-valued function with components indexed by $r\in \ZZ/20\ZZ$. We have $H^{(10)}_{g,r}=-H^{(10)}_{g,-r}$, so it suffices to specify the $H^{(10)}_{g,r}$ for $1\leq r\leq 9$. 

To define $H^{(10)}_g=(H^{(10)}_{g,r})$ for $g=e$, first define $h(\tau)=(h_r(\tau))$ by requiring that 
\begin{gather}
	-6\Psi_{1,1}(\tau,z)\varphi^{(10)}_1(\tau,z)
	=-24\mu_{10,0}(\tau,z)+\sum_{r\mod 20}h_{r}(\tau)\theta_{10,r}(\tau,z),
\end{gather}
where
\begin{gather}\label{eqn:exp:vp10}
	\varphi^{(10)}_1(\tau,z):=5\varphi_1^{(4)}(\tau,z)\varphi_1^{(7)}(\tau,z)-\varphi_1^{(5)}(\tau,z)\varphi_1^{(6)}(\tau,z).
\end{gather}
(Cf. (\ref{eqn:exp:vp4}), (\ref{eqn:exp:vp5}), (\ref{eqn:exp:vp6}), (\ref{eqn:exp:vp7}).) Now define the $H^{(10)}_{1A,r}$ for $r$ odd by setting 
\begin{gather}
	\begin{split}
		H^{(10)}_{1A,1}(\tau)&:=\frac{1}{24}\left(3h_1(\tau)+h_9(\t)\right),\\
		H^{(10)}_{1A,3}(\tau)&:=\frac{1}{24}\left(3h_3(\tau)+h_7(\t)\right),\\
		H^{(10)}_{1A,5}(\tau)&:=\frac{1}{6}h_5(\tau),\\		
		H^{(10)}_{1A,3}(\tau)&:=\frac{1}{24}\left(h_3(\tau)+3h_7(\t)\right),\\
		H^{(10)}_{1A,9}(\tau)&:=\frac{1}{24}\left(h_1(\tau)+3h_9(\t)\right).
	\end{split}
\end{gather}
For $r=0\mod 2$ set
\begin{gather}
	H^{(10)}_{1A,r}(\tau):=\frac{1}{12}h_r(\tau),
\end{gather}
and define $H^{(10)}_{2A,r}$ for $1\leq r\leq 9$ by requiring
\begin{gather}
	H^{(10)}_{2A,r}(\tau):=-(-1)^rH^{(10)}_{1A,r}(\tau).
\end{gather}

It remains to specify $H^{(10)}_{g,r}$ for $g\in 4A\cup 4B$. For $r=0\mod 2$ set 
\begin{gather}
	H^{(10)}_{4AB,r}(\tau):=0.
\end{gather}
For $r$ odd, recall (\ref{eqn:exp:specfun-proj}), and define 
\begin{gather}\label{eqn:exp:2A9D6-rel}
	H^{(10)}_{4A,r}(\t):=[-\tfrac{r^2}{40}]\frac{1}{2}\left(H^{(2)}_{10A,r}(\tau/5)+H^{(5)}_{4CD,r}(\tau/2)\right).
\end{gather}

\subsubsection{$\ell=10+5$, $X=D_6^4$}\label{sec:exp:4D6}

We have $m^X=10$, so the umbral McKay-Thompson series $H^{(10+5)}_g=(H^{(10+5)}_{g,r})$ associated to $g\in G^{(10+5)}$ is a $20$-vector-valued function with components indexed by $r\in \ZZ/20\ZZ$. We have $H^{(10+5)}_{g,r}=0$ for $r=0\mod 2$, so it suffices to specify the $H^{(10+5)}_{g,r}$ for $r$ odd. Observing that $H^{(10+5)}_{g,r}=-H^{(10+5)}_{g,-r}$ we may determine $H^{(10+5)}_g$ by requiring that
\begin{gather}
	\psi^{(5/2)}_g(\tau,z)
	=-2\chi^{(5/2)}_gi\mu_{5/2,0}(\tau,z)+
	\sum_{\substack{r\in\ZZ+1/2\\  r\mod 5}}
	e(-r/2)H^{(10+5)}_{g,2r}(\tau)\theta_{5/2,r}(\tau,z),
\end{gather}
where $\chi^{(5/2)}_g:=\bar\chi^{X_D}_g$ as in Table \ref{tab:chars:eul:10+5}, and the $\psi^{(5/2)}_g$ are the meromorphic Jacobi forms of weight $1$ and index $5/2$ defined as follows.

\begin{table}[ht]
\centering
\caption{Character Values and Meromorphic Jacobi Forms for $\ell=10+5$, $X=D_{6}^{4}$ \label{tab:4D6-psiXg}}
\begin{tabular}{rrl}
$[g]$&$\bar\chi^{(5/2)}_g$&$\psi^{(5/2)}_g(\tau,z)$\\
\noalign{\vskip 1mm}
\hline
\noalign{\vskip 1mm}
1A&4&$2i{\theta_1(\tau,2z)^2}{\theta_1(\tau,z)^{-3}}\eta(\tau)^3$
	\\
2A&0&$-2i{\theta_1(\tau,2z)^2}{\theta_1(\tau,z)^{-1}\theta_2(\tau,z)^{-2}}\eta(\tau)^3$
	\\
3A&$1$& $2i{\theta_1(3\tau,6z)}{\theta_1(\tau,2z)^{-1}\theta_1(3\tau,3z)^{-1}}\eta(\tau)^3$
	\\
2B&2&$2i{\theta_1(\tau,2z)\theta_2(\tau,2z)}{\theta_1(\tau,z)^{-2}\theta_2(\tau,z)^{-1}}\eta(\tau)^3$
	\\
4A&0&$-2i{\theta_1(\tau,2z)\theta_2(\tau,2z)}{\theta_2(2\tau,2z)^{-1}}\eta(\tau)\eta(2\tau)$
\end{tabular}
\end{table}

\subsubsection{$\ell=12$, $X=A_{11}D_7E_6$}\label{sec:exp:A11D7E6}

We have $m^X=12$, so the umbral McKay-Thompson series $H^{(12)}_g=(H^{(12)}_{g,r})$ associated to $g\in G^{(12)}\simeq \ZZ/2\ZZ$ is a $24$-vector-valued function with components indexed by $r\in \ZZ/24\ZZ$. We have $H^{(12)}_{g,r}=-H^{(12)}_{g,-r}$, so it suffices to specify the $H^{(12)}_{g,r}$ for $1\leq r\leq 11$. 

To define $H^{(12)}_e=(H^{(12)}_{e,r})$, first define $h(\tau)=(h_r(\tau))$ by requiring that 
\begin{gather}
	-2\Psi_{1,1}(\tau,z)\left(\varphi^{(12)}_1(\tau,z)+\varphi_2^{(12)}(\tau,z)\right)
	=-24\mu_{12,0}(\tau,z)+\sum_{r\mod 24}h_{r}(\tau)\theta_{12,r}(\tau,z),
\end{gather}
where
\begin{gather}\label{eqn:exp:vp12}
	\begin{split}
	\varphi^{(12)}_1(\tau,z)&:=3\varphi_1^{(3)}(\tau,z)\varphi_1^{(10)}(\tau,z)-8\varphi_1^{(4)}(\tau,z)\varphi_1^{(9)}(\tau,z)+\varphi_1^{(5)}(\tau,z)\varphi_1^{(8)}(\tau,z),\\
	\varphi^{(12)}_2(\tau,z)&:=4\varphi_1^{(4)}(\tau,z)\varphi_1^{(9)}(\tau,z)-\varphi_1^{(5)}(\tau,z)\varphi_1^{(8)}(\tau,z)-\varphi_1^{(12)}(\tau,z).
	\end{split}
\end{gather}
(Cf. (\ref{eqn:exp:vp3}), (\ref{eqn:exp:vp4}), (\ref{eqn:exp:vp5}), (\ref{eqn:exp:vp7}), (\ref{eqn:exp:vp8}), (\ref{eqn:exp:vp9}), (\ref{eqn:exp:vp10}).) Now define the $H^{(12)}_{1A,r}$ for $r\neq 0 \mod 3$ by setting 
\begin{gather}
	\begin{split}
		H^{(12)}_{1A,1}(\tau)&:=\frac{1}{24}\left(3h_1(\tau)+h_7(\t)\right),\\
		H^{(12)}_{1A,2}(\tau), H^{(12)}_{1A,10}(\tau)&:=\frac{1}{24}\left(h_2(\tau)+h_{10}(\tau)\right),\\
		H^{(12)}_{1A,4}(\tau),H^{(12)}_{1A,8}(\tau)&:=\frac{1}{12}\left(h_4(\tau)+h_8(\t)\right),\\
		H^{(12)}_{1A,5}(\tau)&:=\frac{1}{24}\left(3h_5(\tau)+h_{11}(\t)\right),\\
		H^{(12)}_{1A,7}(\tau)&:=\frac{1}{24}\left(h_1(\tau)+3h_7(\t)\right),\\
		H^{(12)}_{1A,11}(\tau)&:=\frac{1}{24}\left(h_5(\tau)+3h_{11}(\t)\right).
	\end{split}
\end{gather}
For $r=0\mod 3$ set
\begin{gather}
	H^{(12)}_{1A,r}(\tau):=\frac{1}{12}h_r(\tau),
\end{gather}
and define $H^{(12)}_{2A,r}$ by requiring
\begin{gather}
	H^{(12)}_{2A,r}(\tau):=-(-1)^rH^{(12)}_{1A,r}(\tau).
\end{gather}

\subsubsection{$\ell=12+4$, $X=E_6^4$}\label{sec:exp:4E6}

We have $m^X=12$, so the umbral McKay-Thompson series $H^{(12+4)}_g=(H^{(12+4)}_{g,r})$ associated to $g\in G^{(12+4)}$ is a $24$-vector-valued function with components indexed by $r\in \ZZ/24\ZZ$. In addition to the identity $H^{(12+4)}_{g,r}=-H^{(12+4)}_{g,-r}$, we have $H^{(12+4)}_{g,r}=0$ for $r\in\{2,3,6,9,10\}$, $H^{(12+4)}_{g,1}=H^{(12+4)}_{g,7}$, $H^{(12+4)}_{g,4}=H^{(12+4)}_{g,8}$, and $H^{(12+4)}_{g,5}=H^{(12+4)}_{g,11}$. Thus it suffices to specify the $H^{(12+4)}_{g,1}$, $H^{(12+4)}_{g,4}$ and $H^{(12+4)}_{g,5}$.

Recall (\ref{eqn:exp:specfun-proj}). Also, set $S^{E_6}_1(\tau):=S_{12,1}(\tau)+S_{12,7}(\t)$, and $S^{E_6}_5(\t):=S_{12,5}(\t)+S_{12,11}(\t)$. For $r=1$ define
\begin{gather}
\begin{split}
	H^{(12+4)}_{1A,1}(\tau)
	&:=H^{(12)}_{1A,1}(\tau)+H^{(12)}_{1A,7}(\tau),\\
	H^{(12+4)}_{2B,1}(\tau)	&:=[-\tfrac{1}{48}]\left(H^{(6)}_{8AB,1}(\tau/2)-H^{(6)}_{8AB,5}(\tau/2)\right),\\
	H^{(12+4)}_{4A,1}(\tau)	&:=\frac{1}{S^{E_6}_{1}(\t)^2-S^{E_6}_5(\t)^2}\left(-2\frac{\eta(2\t)^8}{\eta(\t)^4}S^{E_6}_1(\t)+8\frac{\eta(\t)^4\eta(4\t)^4}{\eta(2\t)^4}S^{E_6}_5(\t)\right),\\
	H^{(12+4)}_{3A,1}(\tau)	&:=[-\tfrac{1}{48}]\left(H^{(6)}_{3A,1}(\tau/2)-H^{(6)}_{3A,5}(\tau/2)\right),\\
	H^{(12+4)}_{8AB,1}(\tau)	&:=\frac{1}{S^{E_6}_{1}(\t)^2-S^{E_6}_5(\t)^2}\left(-2F^{(12+4)}_{8AB,1}(\t)S^{E_6}_1(\t)+12F^{(12+4)}_{8AB,5}(\tau/2)S^{E_6}_5(\t)\right).
\end{split}
\end{gather}
In the expression for $g\in 8AB$, we write $F^{(12+4)}_{8AB,1}$ for the unique modular form of weight $2$ for $\Gamma_0(32)$ such that
\begin{gather}
F^{(12+4)}_{8AB,1}(\t)=1 + 12q + 4q^2 - 24q^5 - 16q^6 - 8q^8+O(q^9),
\end{gather}
and we write $F^{(12+4)}_{8AB,5}$ for the unique modular form of weight $2$ for $\Gamma_0(64)$ such that
\begin{gather}
F^{(12+4)}_{8AB,5}(\t)=3q + 4q^3 + 6q^5 - 8q^7 - 9q^9 + 12q^{11} - 18q^{13} - 24q^{15} + O(q^{17}).
\end{gather}

For $r=4$ define
\begin{gather}
\begin{split}
	H^{(12+4)}_{1A,4}(\tau)&:=H^{(12)}_{1A,4}(\tau)+H^{(12)}_{1A,8}(\tau),\\
	H^{(12+4)}_{3A,4}(\tau)	&:=H^{(6)}_{3A,2}(\tau/2)+H^{(6)}_{3A,4}(\tau/2),\\
\end{split}
\end{gather}
and set $H^{(12+4)}_{g,4}(\tau):=0$ for $g\in 2B\cup 4A\cup 8AB$.

For $r=5$ define
\begin{gather}
\begin{split}
	H^{(12+4)}_{1A,5}(\tau)
	&:=H^{(12)}_{1A,5}(\tau)+H^{(12)}_{1A,11}(\tau),\\
	H^{(12+4)}_{2B,5}(\tau)	&:=[-\tfrac{25}{48}]\left(H^{(6)}_{8AB,5}(\tau/2)-H^{(6)}_{8AB,1}(\tau/2)\right),\\
	H^{(12+4)}_{4A,5}(\tau)	&:=\frac{1}{S^{E_6}_{1}(\t)^2-S^{E_6}_5(\t)^2}\left(2\frac{\eta(2\t)^8}{\eta(\t)^4}S^{E_6}_5(\t)-8\frac{\eta(\t)^4\eta(4\t)^4}{\eta(2\t)^4}S^{E_6}_1(\t)\right),\\
	H^{(12+4)}_{3A,5}(\tau)	&:=[-\tfrac{25}{48}]\left(H^{(6)}_{3A,5}(\tau/2)-H^{(6)}_{3A,1}(\tau/2)\right),\\
	H^{(12+4)}_{8AB,5}(\tau)	&:=\frac{1}{S^{E_6}_{1}(\t)^2-S^{E_6}_5(\t)^2}\left(2F^{(12+4)}_{8AB,1}(\t)S^{E_6}_5(\t)-12F^{(12+4)}_{8AB,5}(\tau/2)S^{E_6}_1(\t)\right).
\end{split}
\end{gather}

Finally, define $H^{(12+4)}_{g,r}$ for $g\in 2A\cup 6A$ by setting
\begin{gather}
\begin{split}
	H^{(12+4)}_{2A,r}(\tau)&:=-(-1)^rH^{(12+4)}_{1A,r}(\tau),\\
	H^{(12+4)}_{6A,r}(\tau)&:=-(-1)^rH^{(12+4)}_{3A,r}(\tau).
\end{split}
\end{gather}

\subsubsection{$\ell=13$, $X=A_{12}^2$}\label{sec:exp:2A12}

We have $m^X=13$, so the umbral McKay-Thompson series $H^{(13)}_g=(H^{(13)}_{g,r})$ associated to $g\in G^{(13)}=G^X\simeq \ZZ/4\ZZ$ is a $26$-vector-valued function, with components indexed by $r\in \ZZ/26\ZZ$. 

Define $H^{(13)}_g=(H^{(13)}_{g,r})$ for $g\in G^{(13)}$ by requiring that
\begin{gather}
	\psi^{(13)}_g(\tau,z)=
	-\chi^{(13)}_g\mu_{13,0}^0(\t,z)
	-\bar\chi^{(13)}_g\mu_{13,0}^1(\t,z)
	+\sum_{r\mod 26}H^{(13)}_{g,r}(\t)\theta_{13,r}(\tau,z),
\end{gather}
where $\chi^{(13)}_{g}:=\chi^{X_A}_g$ and $\bar\chi^{(13)}_{g}:=\bar\chi^{X_A}_g$ (cf. Table \ref{tab:chars:eul:13}), and the $\psi^{(13)}_g$ are meromorphic Jacobi forms of weight $1$ and index $13$ given explicitly in Table \ref{tab:2A12-psiXg}.

\begin{table}[ht]
\centering
\caption{Character Values and Meromorphic Jacobi Forms for $\ell=13$, $X=A_{12}^{2}$ \label{tab:2A12-psiXg}}
\begin{tabular}{rrrl}
$[g]$&$\chi^{(13)}_g$&$\bar\chi^{(13)}_g$&$\psi^{(13)}_g(\tau,z)$\\
\noalign{\vskip 1mm}
\hline
\noalign{\vskip 1mm}
1A&2&2&$2i{\th_1(\tau,6z)}{\th_1(\tau,z)^{-1}\th_1(\tau,3z)^{-1}}\eta(\tau)^3$
	\\
2A&$-2$&2&$-2i{\th_1(\tau,6z)}{\th_2(\tau,z)^{-1}\th_2(\tau,3z)^{-1}}\eta(\tau)^3$
	\\
\noalign{\vskip 2pt}\hline\noalign{\vskip 2pt}
4A&$0$&0& cf. (\ref{eqn:exp:2A12-psiXg})
\end{tabular}
\end{table}

\begin{gather}\label{eqn:exp:2A12-psiXg}	
\begin{split}	
	\psi^{(13)}_{4AB}(\tau,z)&:=
	-i\th_2(\tau,6z)\frac{\th_1(\tau,z+\frac14)\th_1(\tau,3z+\frac14)-\th_1(\tau,z-\frac14)\th_1(\tau,3z-\frac14)}{\th_2(2\tau,2z)\th_2(2\tau,6z)}\frac{\eta(2\tau)^2}{\eta(\tau)}
\end{split}
\end{gather}

For use later on we note that $\psi^{(13)}_{1A}=-2\Psi_{1,1}\varphi^{(13)}_1$, where
\begin{gather}\label{eqn:exp:vp13}
	\varphi^{(13)}_1(\tau,z):=\frac{\th_1(\tau,z)\th_1(\tau,6z)}{\th_1(\tau,2z)\th_1(\tau,3z)}.
\end{gather}

\subsubsection{$\ell=14+7$, $X=D_8^3$}\label{sec:exp:3D8}

We have $m^X=14$, so the umbral McKay-Thompson series $H^{(14+7)}_g=(H^{(14+7)}_{g,r})$ associated to $g\in G^{(14+7)}$ is a $28$-vector-valued function with components indexed by $r\in \ZZ/28\ZZ$. We have $H^{(14+7)}_{g,r}=0$ for $r=0\mod 2$, so it suffices to specify the $H^{(14+7)}_{g,r}$ for $r$ odd. Observing that $H^{(14+7)}_{g,r}=-H^{(14+7)}_{g,-r}$ we may determine $H^{(14+7)}_g$ by requiring that
\begin{gather}
	\psi^{(7/2)}_g(\tau,z)
	=-2\bar\chi^{(7/2)}_gi\mu_{7/2,0}(\tau,z)+
	\sum_{\substack{r\in\ZZ+1/2\\  r\mod 7}}
	e(-r/2)H^{(14+7)}_{g,2r}(\tau)\theta_{7/2,r}(\tau,z),
\end{gather}
where $\bar\chi^{(7/2)}_g:=\bar\chi^{X_D}_g$ is the number of fixed points of $g\in G^{(14+7)}\simeq S_3$ in the defining permutation representation on $3$ points. The $\psi^{(7/2)}_g$ are the meromorphic Jacobi forms of weight $1$ and index $7/2$ defined in Table \ref{tab:3D8-psiXg}.

\begin{table}[ht]
\centering
\caption{Character Values and Meromorphic Jacobi Forms for $\ell=14+7$, $X=D_{8}^{3}$ \label{tab:3D8-psiXg}}
\begin{tabular}{rrl}
$[g]$&$\bar\chi^{(7/2)}_g$&$\psi^{(7/2)}_g(\tau,z)$\\
\noalign{\vskip 1mm}
\hline
\noalign{\vskip 1mm}
1A&3&$2i{\theta_1(\tau,3z)}{\theta_1(\tau,z)^{-2}}\eta(\tau)^3$
	\\
2A&1&$2i{\theta_2(\tau,3z)}{\theta_1(\tau,z)^{-1}\theta_2(\tau,z)^{-1}}\eta(\tau)^3$
	\\
3A&$0$& $-2i\th_1(\tau,z)\th_1(\tau,3z)\th_1(3\tau,3z)^{-1}\eta(3\tau)$
\end{tabular}
\end{table}

\subsubsection{$\ell=16$, $X=A_{15}D_9$}\label{sec:exp:A15D9}

We have $m^X=16$, so the umbral McKay-Thompson series $H^{(16)}_g=(H^{(16)}_{g,r})$ associated to $g\in G^{(16)}\simeq \ZZ/2\ZZ$ is a $32$-vector-valued function with components indexed by $r\in \ZZ/32\ZZ$. We have $H^{(16)}_{g,r}=-H^{(16)}_{g,-r}$, so it suffices to specify the $H^{(16)}_{g,r}$ for $1\leq r\leq 15$. 

To define $H^{(16)}_g=(H^{(16)}_{g,r})$ for $g=e$, first define $h(\tau)=(h_r(\tau))$ by requiring that 
\begin{gather}
	-6\Psi_{1,1}(\tau,z)\left(\varphi^{(16)}_1(\tau,z)+\frac12\varphi_2^{(16)}(\tau,z)\right)
	=-24\mu_{16,0}(\tau,z)+\sum_{r\mod 32}h_{r}(\tau)\theta_{16,r}(\tau,z),
\end{gather}
where
\begin{gather}\label{eqn:exp:vp16}
	\begin{split}
	\varphi^{(16)}_1(\tau,z)&:=8\varphi_1^{(4)}(\tau,z)\varphi_1^{(13)}(\tau,z)-\varphi_1^{(5)}(\tau,z)\varphi_1^{(12)}(\tau,z)+\varphi_1^{(7)}(\tau,z)\varphi_1^{(10)}(\tau,z),\\
	\varphi^{(16)}_2(\tau,z)&:=12\varphi_1^{(4)}(\tau,z)\varphi_1^{(13)}(\tau,z)-\varphi_1^{(5)}(\tau,z)\varphi_1^{(12)}(\tau,z)-3\varphi_1^{(16)}(\tau,z).
	\end{split}
\end{gather}
(Cf. (\ref{eqn:exp:vp4}), (\ref{eqn:exp:vp5}), (\ref{eqn:exp:vp7}), (\ref{eqn:exp:vp10}), (\ref{eqn:exp:vp12}), (\ref{eqn:exp:vp13}).) Now define the $H^{(16)}_{1A,r}$ by setting 
\begin{gather}
	H^{(16)}_{1A,r}(\tau):=\frac{1}{12}h_r(\t)
\end{gather}
for $r$ odd. For $r$ even, $2\leq r\leq 14$, use
\begin{gather}
	H^{(16)}_{1A,r}(\tau):=\frac{1}{24}\left(h_r(\t)+h_{16-r}(\t)\right).
\end{gather}
Define $H^{(16)}_{2A,r}$ by requiring
\begin{gather}
	H^{(16)}_{2A,r}(\tau):=-(-1)^rH^{(16)}_{1A,r}(\tau).
\end{gather}

\subsubsection{$\ell=18$, $X=A_{17}E_7$}\label{sec:exp:A17E7}

We have $m^X=18$, so the umbral McKay-Thompson series $H^{(18)}_g=(H^{(18)}_{g,r})$ associated to $g\in G^{(18)}\simeq \ZZ/2\ZZ$ is a $36$-vector-valued function with components indexed by $r\in \ZZ/36\ZZ$. We have $H^{(18)}_{g,r}=-H^{(18)}_{g,-r}$, so it suffices to specify the $H^{(18)}_{g,r}$ for $1\leq r\leq 17$. 

To define $H^{(18)}_g=(H^{(18)}_{g,r})$ for $g=e$, first define $h(\tau)=(h_r(\tau))$ by requiring that 
\begin{gather}
	-24\Psi_{1,1}(\tau,z)\phi^{(18)}(\tau,z)
	=-24\mu_{18,0}(\tau,z)+\sum_{r\mod 36}h_{r}(\tau)\theta_{18,r}(\tau,z),
\end{gather}
where
\begin{gather}\label{eqn:exp:phi18}
	\phi^{(18)}:=\frac{1}{12}\left(\varphi^{(18)}_1+\frac13\varphi_3^{(18)}+4\frac{\th_1^{12}}{\eta^{12}}\left(\varphi_1^{(12)}+2\varphi_2^{(12)}+\frac13\varphi_3^{(12)}\right)\right).
\end{gather}
For the definition of $\phi^{(18)}$ we require
\begin{gather}
	\begin{split}\label{eqn:exp:vp18}
	\varphi^{(9)}_2(\tau,z)&:=\varphi_1^{(4)}(\tau,z)\varphi_1^{(6)}(\tau,z)-4\varphi_1^{(5)}(\tau,z)^2-4\varphi_1^{(9)}(\tau,z),\\
	\varphi^{(11)}_1(\tau,z)&:=3\varphi_1^{(5)}(\tau,z)\varphi_1^{(7)}(\tau,z)+2\varphi_1^{(3)}(\tau,z)\varphi_1^{(9)}(\tau,z)-\varphi_1^{(4)}(\tau,z)\varphi_1^{(8)}(\tau,z),\\
	\varphi^{(12)}_3(\tau,z)&:=\varphi_1^{(4)}(\tau,z)\varphi_2^{(9)}(\tau,z),\\
	\varphi^{(14)}_1(\tau,z)&:=3\varphi_1^{(5)}(\tau,z)\varphi_1^{(10)}(\tau,z)+\varphi_1^{(3)}(\tau,z)\varphi_1^{(12)}(\tau,z)-4\varphi_1^{(4)}(\tau,z)\varphi_1^{(11)}(\tau,z),\\
	\varphi^{(15)}_1(\tau,z)&:=\varphi_1^{(5)}(\tau,z)\varphi_1^{(11)}(\tau,z)+6\varphi_1^{(3)}(\tau,z)\varphi_1^{(13)}(\tau,z)-\varphi_1^{(4)}(\tau,z)\varphi_1^{(12)}(\tau,z),\\
	\varphi^{(15)}_2(\tau,z)&:=\varphi_1^{(4)}(\tau,z)\varphi_1^{(12)}(\tau,z)-2\varphi_1^{(5)}(\tau,z)\varphi_1^{(11)}(\tau,z)-2\varphi_1^{(15)}(\tau,z),\\
	\varphi^{(18)}_1(\tau,z)&:=\varphi_1^{(5)}(\tau,z)\varphi_1^{(14)}(\tau,z)+3\varphi_1^{(3)}(\tau,z)\varphi_1^{(16)}(\tau,z)-4\varphi_1^{(4)}(\tau,z)\varphi_1^{(15)}(\tau,z),\\
	\varphi^{(18)}_3(\tau,z)&:=\varphi_1^{(4)}(\tau,z)\varphi_2^{(15)}(\tau,z),
	\end{split}
\end{gather}
in addition to the other $\varphi^{(m)}_k$ that have appeared already. Now define the $H^{(18)}_{1A,r}$ by setting 
\begin{gather}
	H^{(18)}_{1A,r}(\tau):=\frac{1}{24}h_r(\t)
\end{gather}
for $r$ even. For $r$ odd, use
\begin{gather}
		\begin{split}
	H^{(18)}_{1A,1}(\tau)&:=\frac{1}{24}\left(2h_1(\t)+h_{17}(\t)\right),\\
	H^{(18)}_{1A,3}(\tau)&:=\frac{1}{24}\left(h_3(\t)+h_{9}(\t)\right),\\
	H^{(18)}_{1A,5}(\tau)&:=\frac{1}{24}\left(2h_5(\t)+h_{13}(\t)\right),\\
	H^{(18)}_{1A,7}(\tau)&:=\frac{1}{24}\left(2h_7(\t)+h_{11}(\t)\right),\\
	H^{(18)}_{1A,9}(\tau)&:=\frac{1}{24}\left(h_3(\t)+2h_9(\t)+h_{15}(\t)\right),\\
	H^{(18)}_{1A,11}(\tau)&:=\frac{1}{24}\left(h_7(\t)+2h_{11}(\t)\right),\\
	H^{(18)}_{1A,13}(\tau)&:=\frac{1}{24}\left(h_5(\t)+2h_{13}(\t)\right),\\
	H^{(18)}_{1A,15}(\tau)&:=\frac{1}{24}\left(h_{15}(\t)+h_{9}(\t)\right),\\
	H^{(18)}_{1A,17}(\tau)&:=\frac{1}{24}\left(h_1(\t)+2h_{17}(\t)\right).	
		\end{split}
\end{gather}
Define $H^{(18)}_{2A,r}$ in the usual way for root systems with a type A component, by requiring
\begin{gather}
	H^{(18)}_{2A,r}(\tau):=-(-1)^rH^{(18)}_{1A,r}(\tau).
\end{gather}

\subsubsection{$\ell=18+9$, $X=D_{10}E_7^2$}\label{sec:exp:D102E7}

We have $m^X=18$, so the umbral McKay-Thompson series $H^{(18+9)}_g=(H^{(18+9)}_{g,r})$ associated to $g\in G^{(18+9)}\simeq \ZZ/2\ZZ$ is a $36$-vector-valued function with components indexed by $r\in \ZZ/36\ZZ$. We have $H^{(18+9)}_{g,r}=-H^{(18+9)}_{g,-r}$, $H^{(18+9)}_{g,r}=H^{(18+9)}_{g,18-r}$ for $1\leq r\leq 17$, and $H^{(18+9)}_{g,r}=0$ for $r=0\mod 2$, so it suffices to specify the $H^{(18+9)}_{g,r}$ for $r\in\{1,3,5,7,9\}$. 

Define
\begin{gather}
	\begin{split}
	H^{(18+9)}_{1A,r}(\t)&:=H^{(18)}_{1A,r}(\t)+H^{(18)}_{1A,18-r}(\t),\\
	H^{(18+9)}_{2A,r}(\t)&:=H^{(18)}_{1A,r}(\t)-H^{(18)}_{1A,18-r}(\t),\\
	\end{split}
\end{gather}
for $r\in\{1,3,5,7,9\}$.

\subsubsection{$\ell=22+11$, $X=D_{12}^2$}\label{sec:exp:2D12}

We have $m^X=22$, so the umbral McKay-Thompson series $H^{(22+11)}_g=(H^{(22+11)}_{g,r})$ associated to $g\in G^{(22+11)}\simeq \ZZ/2\ZZ$ is a $44$-vector-valued function with components indexed by $r\in \ZZ/44\ZZ$. We have $H^{(22+11)}_{g,r}=-H^{(22+11)}_{g,-r}$ and $H^{(22+11)}_{g,r}=0$ for $r=0\mod 2$, so it suffices to specify the $H^{(22+11)}_{g,r}$ for $r$ odd. Observing that $H^{(22+11)}_{g,r}=-H^{(22+11)}_{g,-r}$ we may determine $H^{(22+11)}_g$ by requiring that
\begin{gather}
	\psi^{(11/2)}_g(\tau,z)
	=-2\bar\chi^{(11/2)}_gi\mu_{11/2,0}(\tau,z)+
	\sum_{\substack{r\in\ZZ+1/2\\  r\mod 11}}
	e(-r/2)H^{(22+11)}_{g,2r}(\tau)\theta_{11/2,r}(\tau,z),
\end{gather}
where $\bar\chi^{(11/2)}_{1A}:=2$, $\bar\chi^{(11/2)}_{2A}:=0$, and the $\psi^{(11/2)}_g$ are the meromorphic Jacobi forms of weight $1$ and index $11/2$ defined as follows.
\begin{gather}
	\begin{split}
\psi^{(11/2)}_{1A}(\tau,z)&:=2i\frac{\theta_1(\tau,4z)}{\theta_1(\tau,z)\theta_1(\tau,2z)}\eta(\tau)^3\\
\psi^{(11/2)}_{2A}(\tau,z)&:=-2i\frac{\theta_1(\tau,4z)}{\theta_2(\tau,z)\theta_2(\tau,2z)}\eta(\tau)^3
	\end{split}
\end{gather}

\subsubsection{$\ell=25$, $X=A_{24}$}\label{sec:exp:A24}

We have $m^X=25$, so for $g\in G^{(25)}\simeq \ZZ/2\ZZ$, the associated umbral McKay-Thompson series 
$H^{(25)}_g=(H^{(25)}_{g,r})$ is a $50$-vector-valued function, with components indexed by $r\in \ZZ/50\ZZ$, satisfying $H^{(25)}_{g,r}=-H^{(25)}_{g,-r}$, and in particular, $H^{(25)}_{g,r}=0$ for $r=0\mod 25$. So it suffices to specify the $H^{(25)}_{g,r}$ for $1\leq r\leq 24$.

Define $H^{(25)}_g=(H^{(25)}_{g,r})$ for $g=e$ by requiring that 
\begin{gather}
	-\Psi_{1,1}(\tau,z)\varphi_1^{(25)}(\tau,z)
	=-\mu_{25,0}(\tau,z)+\sum_{r\mod 50}H^{(25)}_{e,r}(\tau)\theta_{25,r}(\tau,z),
\end{gather}
where
\begin{gather}\label{eqn:exp:vp25}
	\varphi_1^{(25)}(\tau,z):=\frac12\varphi_1^{(5)}(\tau,z)\varphi_1^{(21)}(\tau,z)-\varphi_1^{(7)}(\tau,z)\varphi_1^{(19)}(\tau,z)+\frac12\varphi_1^{(13)}(\tau,z)^2.
\end{gather}
For the definition of $\varphi_1^{(25)}$ we require
\begin{gather}
	\begin{split}\label{eqn:exp:vplast}
	\varphi^{(17)}_1(\tau,z)&:=4\varphi_1^{(5)}(\tau,z)\varphi_1^{(13)}(\tau,z)-\varphi_1^{(9)}(\tau,z)^2,\\
	\varphi^{(19)}_1(\tau,z)&:=\varphi_1^{(4)}(\tau,z)\varphi_1^{(16)}(\tau,z)+2\varphi_1^{(7)}(\tau,z)\varphi_1^{(13)}(\tau,z)-\varphi_1^{(5)}(\tau,z)\varphi_1^{(15)}(\tau,z),\\
	\varphi^{(21)}_1(\tau,z)&:=\varphi_1^{(5)}(\tau,z)\varphi_1^{(17)}(\tau,z)-2\varphi_1^{(9)}(\tau,z)\varphi_1^{(13)}(\tau,z),
	\end{split}
\end{gather}
in addition to the other $\varphi^{(m)}_k$ that have appeared already. Define $H^{(25)}_{2A,r}$ in the usual way for root systems with a type A component, by requiring
\begin{gather}
	H^{(18)}_{2A,r}(\tau):=-(-1)^rH^{(18)}_{1A,r}(\tau).
\end{gather}

\subsubsection{$\ell=30+15$, $X=D_{16}E_8$}\label{sec:exp:D16E8}

We have $m^X=30$, so the umbral McKay-Thompson series $H^{(30+15)}_g=(H^{(30+15)}_{g,r})$ associated to $g\in G^{(30+15)}=\{e\}$ is a $60$-vector-valued function with components indexed by $r\in \ZZ/60\ZZ$. We have $H^{(30+15)}_{e,r}=-H^{(30+15)}_{e,-r}$, $H^{(30+15)}_{e,r}=H^{(30+15)}_{e,30-r}$ for $1\leq r\leq 29$, and $H^{(30+15)}_{e,r}=0$ for $r=0\mod 2$, so it suffices to specify the $H^{(30+15)}_{e,r}$ for $r\in\{1,3,5,7,9,11,13,15\}$. 

Define
\begin{gather}
	\begin{split}
	H^{(30+15)}_{1A,1}(\t)&:=\frac12\left(H^{(30+6,10,15)}_{1A,1}+[-\tfrac{1}{120}]H^{(10+5)}_{3A,1}(\tau/3)\right),\\
	H^{(30+15)}_{1A,3}(\t)&:=[-\tfrac{9}{120}]H^{(10+5)}_{3A,3}(\tau/3),\\
	H^{(30+15)}_{1A,5}(\t)&:=[-\tfrac{25}{120}]H^{(10+5)}_{3A,5}(\tau/3),\\
	H^{(30+15)}_{1A,7}(\t)&:=\frac12\left(H^{(30+6,10,15)}_{1A,7}+[-\tfrac{49}{120}]H^{(10+5)}_{3A,3}(\tau/3)\right),\\
	H^{(30+15)}_{1A,11}(\t)&:=\frac12\left(H^{(30+6,10,15)}_{1A,1}-[-\tfrac{1}{120}]H^{(10+5)}_{3A,1}(\tau/3)\right),\\
	H^{(30+15)}_{1A,13}(\t)&:=\frac12\left(H^{(30+6,10,15)}_{1A,7}-[-\tfrac{49}{120}]H^{(10+5)}_{3A,3}(\tau/3)\right),\\
	H^{(30+15)}_{1A,15}(\t)&:=-[-\tfrac{105}{120}]H^{(10+5)}_{3A,5}(\tau/3).
		\end{split}
\end{gather}

\subsubsection{$\ell=30+6,10,15$, $X=E_8^3$}\label{sec:exp:3E8}

We have $m^X=30$, and $G^{(30+6,10,15)}=G^X\simeq S_3$. The umbral McKay-Thompson series $H^{(30+6,10,15)}$ 
is a $60$-vector-valued function with components indexed by $r\in \ZZ/60\ZZ$. We have
\begin{gather}
H^{(30+6,10,15)}_{g,r}(\tau) 
=\begin{cases}\label{eqn:intro-rammcktht1}
	\pm H^{(30+6,10,15)}_{g,1}&\text{ if $r=\pm 1,\pm 11,\pm 19,\pm 29\mod 60$,} \\ 
	\pm H^{(30+6,10,15)}_{g,7} &\text{ if $r=\pm 7,\pm 13,\pm 17,\pm 27\mod 60$,}\\
	0&\text{ else,}  
	\end{cases}
\end{gather}
so it suffices to specify the $H^{(30+6,10,15)}_{g,r}$ for $r=1$ and $r=7$. These functions may be defined as follows.
\begin{gather}
\begin{split}
	H^{(30+6,10,15)}_{1A,1}&:=-2\frac{1}{\eta(\tau)^2}
	\left(\sum_{k,l,m\geq 0}+\sum_{k,l,m<0}\right)
	(-1)^{k+l+m}q^{(k^2+l^2+m^2)/2+2(kl+lm+mk)+(k+l+m)/2+3/40}
	\\
	H^{(30+6,10,15)}_{2A,1}&:=-2\frac{1}{\eta(2\tau)}
	\left(\sum_{k,m\geq 0}-\sum_{k,m<0}\right)
	(-1)^{k+m}q^{3k^2+m^2/2+4km+(2k+m)/2+3/40}
	\\
	H^{(30+6,10,15)}_{3A,1}&:=-2\frac{\eta(\tau)}{\eta(3\tau)}
	\sum_{k\in\ZZ}(-1)^kq^{15k^2/2+3k/2+3/40}\\
	H^{(30+6,10,15)}_{1A,7}&=-2\frac{1}{\eta(\tau)^2}
	\left(\sum_{k,l,m\geq 0}+\sum_{k,l,m<0}\right)
	(-1)^{k+l+m}q^{(k^2+l^2+m^2)/2+2(kl+lm+mk)+3(k+l+m)/2+27/40}
	\\
	H^{(30+6,10,15)}_{2A,7}&=2\frac{1}{\eta(2\tau)}
	\left(\sum_{k,m\geq 0}-\sum_{k,m<0}\right)
	(-1)^{k+m}q^{3k^2+m^2/2+4km+3(2k+m)/2+27/40}
	\\
	H^{(30+6,10,15)}_{3A,7}&=-2\frac{\eta(\tau)}{\eta(3\tau)}
	\sum_{k\in\ZZ}(-1)^kq^{15k^2/2+9k/2+27/40}
\end{split}
\end{gather}

\subsubsection{$\ell=46+23$, $X=D_{24}$}\label{sec:exp:D24}

We have $m^X=22$, and $G^{(46+23)}=\{e\}$. The umbral McKay-Thompson series $H^{(46+23)}_e=(H^{(46+23)}_{e,r})$ is a $92$-vector-valued function with components indexed by $r\in \ZZ/92\ZZ$. We have $H^{(46+23)}_{e,r}=-H^{(46+23)}_{e,-r}$ and $H^{(46+23)}_{e,r}=0$ for $r=0\mod 2$, so it suffices to specify the $H^{(46+23)}_{e,r}$ for $r$ odd. Observing that $H^{(46+23)}_{e,r}=-H^{(46+23)}_{e,-r}$ we may determine $H^{(46+23)}_e$ by requiring that
\begin{gather}
	\psi^{(23/2)}_e(\tau,z)
	=-2i\mu_{23/2,0}(\tau,z)+
	\sum_{\substack{r\in\ZZ+1/2\\  r\mod 23}}
	e(-r/2)H^{(46+23)}_{g,2r}(\tau)\theta_{23/2,r}(\tau,z),
\end{gather}
where $\psi^{(23/2)}_e$ is the meromorphic Jacobi forms of weight $1$ and index $23/2$ defined by setting
\begin{gather}
\psi^{(23/2)}_{e}(\tau,z):=2i\frac{\theta_1(\tau,6z)}{\theta_1(\tau,2z)\theta_1(\tau,3z)}\eta(\tau)^3.
\end{gather}

\subsection{Rademacher Sums}\label{sec:mts:rad}

Let $\Gamma_{\infty}$ denote the subgroup of upper-triangular matrices in $\SL_2(\ZZ)$. Given $\alpha\in \RR$ and $\g\in \SL_2(\ZZ)$, define $\rad^{[\alpha]}_{1/2}(\g,\tau):=1$ if $\g\in \G_\infty$. Otherwise, set
\begin{gather}\label{eqn:mts:rad-radreg}
\rad^{[\alpha]}_{1/2}(\g,\tau):=
	e(-\alpha(\g\t-\g\infty))
	\sum_{k\geq 0}\frac{(2\pi i \alpha(\g\t-\g\infty))^{n+1/2}}{\Gamma(n+3/2)},
\end{gather}
where $e(x):=e^{2\pi i x}$.
Let $n$ be a positive integer, and suppose that $\nu$ is a multiplier system for vector-valued modular forms of weight $1/2$ on $\Gamma=\Gamma_0(n)$. Assume that $\nu=(\nu_{ij})$ satisfies $\nu_{11}(T)=e^{\pi i/2m}$, for some basis $\{\gt{e}_i\}$, for some positive integer $m$, where $T=\left(\begin{smallmatrix}1&1\\0&1\end{smallmatrix}\right)$. To this data, attach the Rademacher sum
\begin{gather}\label{eqn:conj:Rad}
	R_{\Gamma,\nu}(\tau)
	:=
	\lim_{K\to \infty}
	\sum_{\gamma\in\G_\infty\backslash\G_{K,K^2}}
	{\nu}(\g)e\left(-\frac{\gamma\tau}{4m}\right)\gt{e}_1
	\jac(\g,\tau)^{1/2}
	\rad^{[-1/4m]}_{1/2}(\g,\tau)
	,
\end{gather}
where $\Gamma_{K,K^2}:=\left\{\left(\begin{smallmatrix}a&b\\c&d\end{smallmatrix}\right)\in\Gamma\mid 0\leq c<K,\,|d|<K^2\right\}$, and $\jac(\gamma,\tau):=(c\tau+d)^{-1}$ for $\gamma=\left(\begin{smallmatrix}a&b\\c&d\end{smallmatrix}\right)$. If the expression (\ref{eqn:conj:Rad}) converges then it defines a mock modular form of weight $1/2$ for $\Gamma$ whose shadow is given by an explicitly identifiable Poincar\'e series. We refer to \cite{revrad} for a review of this, and to \cite{whalen} for a more general and detailed discussion. 

Convergence of (\ref{eqn:conj:Rad}) can be shown by rewriting the Fourier expansion as in \cite[Theorem 2]{whalen} in terms of a sum of Kloostermann sums weighted by Bessel functions. This expression converges at $w=1/2$ by the analysis discussed at the end of \S\ref{Proofs}, following the method of Hooley as adapted by Gannon. That analysis requires not only establishing that the expressions converge, but also explicitly bounding the rates of convergence.

For the special case that $X=A_8^3$ we require $8$-vector-valued functions $\check t^{(9)}_g=(\check{t}^{(9)}_{g,r})$ for $g\in G^X$ with order $3$ or $6$. For such $g$, define $\check{t}^{(9)}_{g,r}$, for $0<r<9$, by setting 
\begin{gather}\label{eqn:jac:t9s}
	\check{t}^{(9)}_{3A,r}(\t):=
	\begin{cases}
		0,&\text{ if $r\neq 0\mod 3$,}\\
		-\theta_{3,3}(\tau,0),&\text{ if $r=3$,}\\
		\theta_{3,0}(\tau,0),&\text{ if $r=6$,}\\
	\end{cases}
\end{gather}
in the case that $g$ has order  $3$, and 
\begin{gather}
	\check{t}^{(9)}_{6A,r}(\t):=
	\begin{cases}
		0,&\text{ if $r\neq 0\mod 3$,}\\
		-\theta_{3,3}(\tau,0),&\text{ if $r=3$,}\\
		-\theta_{3,0}(\tau,0),&\text{ if $r=6$,}\\
	\end{cases}
\end{gather}
when $o(g)=6$. Here $\theta_{m,r}(\tau,z)$ is as defined in (\ref{eqn:specfun-thmr}).

The following result is proved in \cite{mumcor}, using an analysis of representations of the metaplectic double cover of $\SL_2(\ZZ)$.
\begin{thm}[\!\!\!\cite{mumcor}]\label{thm:mts:rad}
Let $X$ be a Niemeier root system and let $g\in G^X$. Assume that the Rademacher sum $R^X_{\Gamma_0(n_g),\check{\nu}^X_g}$ converges. If $X\neq A_8^3$, or if $X=A_8^3$ and $g\in G^X$ does not satisfy $o(g)=0\mod 3$, then we have
\begin{gather}\label{eqn:mts:rad-HR}
	\check{H}^X_{g}(\t)=-2R^X_{\Gamma_0(n_g),\check{\nu}^X_g}.
\end{gather}
If $X=A_8^3$ and $g\in G^X$ satisfies $o(g)=0\mod 3$ then
\begin{gather}\label{eqn:mts:rad-HRt}
	\check{H}^X_{g,r}(\t)=-2R^X_{\Gamma_0(n_g),\check{\nu}^X_g}(\t)+\check{t}^{(9)}_{g}(\t).
\end{gather}
\end{thm}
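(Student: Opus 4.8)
The final statement to prove is Theorem~\ref{thm:mts:rad}, and the excerpt explicitly attributes its proof to \cite{mumcor}, so the task is really to outline how such a theorem is established rather than to reconstruct a proof from the tools developed earlier in this paper. Accordingly, the plan is to treat the Rademacher sum $R^X_{\Gamma_0(n_g),\check\nu^X_g}$ as a candidate for $-\tfrac12\check H^X_g$ and to match it against the explicit prescriptions of \S\ref{sec:exp} by comparing (i) modular transformation behaviour, (ii) the principal part at every cusp, and (iii) the shadow. The key input is a careful analysis of the multiplier system $\check\nu^X_g$ on the metaplectic double cover of $\SL_2(\ZZ)$ restricted to $\Gamma_0(n_g)$, together with the fact (recalled after (\ref{eqn:conj:Rad})) that a convergent Rademacher sum of weight $1/2$ is automatically a mock modular form whose shadow is a specific Poincar\'e series.

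\textbf{Step 1: normalization and convergence.} First I would fix $g\in G^X$, assume the stated convergence of $R^X_{\Gamma_0(n_g),\check\nu^X_g}$, and record that the sum then defines a weight $1/2$ mock modular form on $\Gamma_0(n_g)$ with the multiplier $\check\nu^X_g$; its principal part is supported at the infinite cusp with leading term $-2q^{-1/4m}\gt e_1$ after the overall factor $-2$, matching the $-2q^{-1/4m}\delta_{r,1}$ term in (\ref{eqn:um-HXgrKXrd}). \textbf{Step 2: shadow comparison.} The shadow of $-2R^X_{\Gamma_0(n_g),\check\nu^X_g}$ is the Poincar\'e series attached to the dual multiplier $\overline{\check\nu^X_g}$; using the representation theory of the metaplectic cover one identifies this Poincar\'e series with the unary theta combination $S^X_g$ of \S\ref{sec:mts:shd}. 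This is exactly the shadow of $\check H^X_g$ by construction, so the difference $\Delta^X_g := \check H^X_g + 2R^X_{\Gamma_0(n_g),\check\nu^X_g}$ is a genuine (weakly holomorphic) vector-valued modular form of weight $1/2$ on $\Gamma_0(n_g)$ with multiplier $\check\nu^X_g$. \textbf{Step 3: bounding the pole order and concluding.} Both $\check H^X_g$ (by the analysis in the proof of Theorem~\ref{Replicable}) and $-2R^X_{\Gamma_0(n_g),\check\nu^X_g}$ have poles only at the infinite cusp and only of order $\le 1/4m$, with matching leading coefficients, so $\Delta^X_g$ is holomorphic at every cusp, i.e. a holomorphic vector-valued modular form of weight $1/2$. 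A weight $1/2$ holomorphic vector-valued form with the relevant multiplier is forced to be a linear combination of unary theta series; examining the exponents occurring in $\check H^X_g$ versus those allowed in the theta series shows $\Delta^X_g=0$ in all cases \emph{except} when $X=A_8^3$ and $o(g)\equiv0\pmod 3$, where precisely the theta contributions $-\theta_{3,3}(\tau,0)$ and $\theta_{3,0}(\tau,0)$ (resp. their sign-twisted analogues for $o(g)=6$) survive — giving the correction term $\check t^{(9)}_g$ of (\ref{eqn:jac:t9s}) and hence (\ref{eqn:mts:rad-HRt}). For all other $(X,g)$ one gets (\ref{eqn:mts:rad-HR}).

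\textbf{Main obstacle.} The genuinely hard part is Step~2 together with the identification of the multiplier: one must compute $\check\nu^X_g$ explicitly on $\Gamma_0(n_g)$ from the action of $g$ on the Niemeier root system $X$, verify it agrees with the multiplier of the shadow combination $S^X_g$ built from the twisted Euler characters of \S\ref{sec:chars:eul}, and show the associated Poincar\'e series is exactly $S^X_g$ rather than merely proportional to it. This is an intricate bookkeeping problem in the representation theory of the metaplectic double cover of $\SL_2(\ZZ)$ — essentially the content of \cite{mumcor} — and it is also where the $A_8^3$ exceptional behaviour first becomes visible, since there the relevant space of weight $1/2$ holomorphic forms is not reduced to zero by the pole-order constraint. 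The remaining steps (convergence being assumed, pole-order bounds coming from \S\ref{sec:exp} and the proof of Theorem~\ref{Replicable}, and the final uniqueness argument for weight $1/2$ forms) are comparatively routine once the multiplier and shadow are pinned down.
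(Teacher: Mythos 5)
The paper does not actually prove Theorem~\ref{thm:mts:rad}: it is imported wholesale from \cite{mumcor}, with only the one-line remark that the proof there proceeds ``using an analysis of representations of the metaplectic double cover of $\SL_2(\ZZ)$.'' There is therefore no in-paper argument to compare yours against. Your outline --- match the multiplier and principal part, identify the shadow of the Rademacher sum with $S^X_g$ via the metaplectic representation theory, and then kill the holomorphic weight-$1/2$ difference by a Serre--Stark-type argument, with the unary theta corrections $\check t^{(9)}_g$ surviving exactly in the $X=A_8^3$, $o(g)\equiv 0\pmod 3$ cases --- is the standard strategy and is consistent with the method the paper attributes to \cite{mumcor}; you have also correctly located the substantive work in the explicit computation of $\check\nu^X_g$ and the identification of the shadow Poincar\'e series, which is precisely what cannot be checked from this paper alone.
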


The $X=A_1^{24}$ case of Theorem \ref{thm:mts:rad} was proven first in \cite{Cheng2011}, via different methods.

\clearpage

\addcontentsline{toc}{section}{References}

\providecommand{\bysame}{\leavevmode\hbox to3em{\hrulefill}\thinspace}
\providecommand{\MR}{\relax\ifhmode\unskip\space\fi MR }

\providecommand{\MRhref}[2]{%
  \href{http://www.ams.org/mathscinet-getitem?mr=#1}{#2}
}
\providecommand{\href}[2]{#2}

\Addresses

\end{document}